\documentclass[10pt,twocolumn,twoside]{IEEEtran}
\usepackage{cite}
\usepackage{amsmath,amssymb,amsfonts}
\usepackage{graphicx}
\usepackage{algorithm,algorithmic}
\usepackage{hyperref}
\usepackage{bm}
\usepackage{colortbl}
\hypersetup{hidelinks=true}
\usepackage{textcomp}
\usepackage{booktabs}
\usepackage{makecell}

\usepackage{amsmath}
\usepackage{amssymb}

\usepackage{amsthm}
\usepackage{mathtools}
\usepackage{xcolor}
\usepackage{hyperref}
\hypersetup{
	colorlinks=true,
	linkcolor=blue,
	filecolor=magenta,      
	urlcolor=cyan,
	citecolor=blue
}

\usepackage{amsmath}

\DeclareMathOperator*{\argmin}{arg\,min}

\usepackage{tabularx}
\usepackage{pifont}
\usepackage{dsfont}
\def\cross{\ding{55}}

\newtheorem{theorem}{Theorem}[section]

\newtheorem{lemma}[theorem]{Lemma}

\newtheorem{assumption}[theorem]{Assumption}

\newtheorem{remark}[theorem]{Remark}

\def\Prm{\mathbf{X}}
\def\prm{\mathbf{x}}
\def\avgprm{\bar{\mathbf{x}}}
\def\dprm{\bm{\lambda}}

\def\bA{\mathbf{A}}

\def\wta{\bar{\bf A}}

\def\wtq{\bar{\bf Q}}
\def\wtk{{\bf K}}

\def\algoname{TiCoPD}

\def\data{\mathbb{P}}
\def\m{M}
\def\oneotimes{{\bf 1}_{\otimes}}
\def\oneotimesT{{\bf 1}_{\otimes}^{\top}}

\def\serr{{\bf e}_s}

\def\gerr{{\bf e}_g}

\def\wxv{{\bf W}_{xv}}

\def\rxi{{\bf R}}
\def\rxii{{\bf R}}

\def\a{{\tt a}}
\def\b{{\tt b}}
\def\c{{\tt c}}
\def\d{{\tt d}}
\def\e{{\tt e}}

\def\rrhomax{\rho_{\max}}
\def\rrhomin{\rho_{\min}}
\def\rhomax{\tilde{\rho}_{\max}}
\def\rhomin{\tilde{\rho}_{\min}}

\newcommand{\dotp}[2]{\left\langle{#1}\ \middle|\ {#2}\right\rangle}
\newcommand{\expec}[1]{\mathbb{E}\left[ {#1} \right]}

\newcommand*{\oscar}[1]{\textbf{\textcolor{magenta}{Oscar: #1}}}

\newcommand{\update}[1]{#1}

% \def\BibTeX{{\rm B\kern-.05em{\sc i\kern-.025em b}\kern-.08em
% 		T\kern-.1667em\lower.7ex\hbox{E}\kern-.125emX}}
% \markboth{\hskip25pc IEEE TRANSACTIONS AND JOURNALS TEMPLATE}
% {Author \MakeLowercase{\textit{et al.}}: Title}

\allowdisplaybreaks
\begin{document}
	\title{Decentralized Stochastic Optimization over Unreliable Networks via Two-timescales Updates}
	\author{Haoming Liu, Chung-Yiu Yau, Hoi-To Wai, \IEEEmembership{Member, IEEE} 
    \textbf{\thanks{HL, CYY, HTW are with the Dept.~of SEEM, CUHK. This research is supported in part by project \#MMT-p5-23 of the Shun Hing Institute of Advanced Engineering, The Chinese University of Hong Kong.}}}
	
\maketitle
	
\begin{abstract}
This paper introduces a robust two-timescale compressed primal-dual ({\algoname}) algorithm tailored for decentralized optimization under bandwidth-limited and unreliable channels. By integrating the majorization-minimization approach with the primal-dual optimization framework, the {\algoname} algorithm strategically compresses the difference term shared among agents to enhance communication efficiency and robustness against noisy channels without compromising convergence stability. The method incorporates a mirror sequence for agent consensus on nonlinearly compressed terms updated on a fast timescale, together with a slow timescale primal-dual recursion for optimizing the objective function. Our analysis demonstrates that the proposed algorithm converges to a stationary solution when the objective function is smooth but possibly non-convex. Numerical experiments corroborate the conclusions of this paper.
% This paper proposes a two-timescale primal-dual algorithm for decentralized optimization with compression to improve communication efficiency. The algorithm is built upon the primal-dual optimization framework and utilizes a majorization-minimization procedure, which naturally suggests the agents to share a compressed difference term during the iteration. Furthermore, it incorporates a mirror sequence for agent consensus on nonlinearly compressed terms updated on a fast timescale, together with a slow timescale primal-dual recursion for optimizing the objective function. We show that the proposed algorithm converges to a solution with squared gradient norm of ${\cal O}(1/\sqrt{T})$ after $T$ iterations. Numerical experiments are provided to verify our results.  
\end{abstract}
	
\begin{IEEEkeywords}
Distributed Optimization, Consensus Optimization, Two-timescale Iteration.
\end{IEEEkeywords}
	
\section{Introduction} \label{sec:introduction}
Many problems in signal processing and machine learning can be handled by tackling the optimization problem:
\begin{equation} \label{eq:origin_problem}
    \min_{\prm \in \mathbb{R}^{d}} ~f(\prm) := \frac{1}{n} \sum_{i=1}^n f_i(\prm) ,
    % \quad {\rm s.t.} \quad \prm_1 = \cdots = \prm_n. 
\end{equation}
where for each $i \in [n] := \{1,\ldots,n\}$,
% $\Prm = [\prm_1^\top~\prm_2^\top~\cdots~\prm_n]^\top$, and 
$f_i: \mathbb{R}^d \rightarrow \mathbb{R}$ is a continuously differentiable function, and $n$ is the number of agents. For example, $f_i$ is the cross-entropy loss for learning a neural network classifier model, defined on the $i$th partition of data. We consider a distributed optimization setting where $f_i$ is a local objective function held by the $i$th agent that is not shared with other agents. The local data available at the agents are typically heterogeneous, i.e., $f_i$ are distinct from others. Instead of relying on a central server that is connected to all agents, \eqref{eq:origin_problem} shall be handled by the cooperation between the $n$ agents. Distributed algorithms for \eqref{eq:origin_problem} have been developed actively, such as in signal processing for tackling the estimation problem in wireless sensor networks in privacy and bandwidth sensitive applications \cite{dimakis2010gossip, xiao2007distributed, kar2013consensus}, and in machine learning for parallel computation \cite{mateos2010distributed, lian2017can}. 

Decentralized optimization algorithms that can operate with on-device computation and local communication with a subset of agents are preferred for problem \eqref{eq:origin_problem}.
% \htwai{shorten the discussion on `normal' DGD / decentralized optimization}
To this end, a popular approach is to mimick the (centralized) gradient method where agents perform gradient update with the local objective functions while performing a consensus step with neighbors. This leads to primal-only schemes such as decentralized gradient (DGD) method \cite{nedic2009distributed, shi2015extra, qu2017harnessing, zeng2018nonconvex, koloskova2020unified} as well as the popular extensions such as gradient tracking, EXTRA \cite{qu2017harnessing, shi2015extra, liu2024decentralized}.
\update{The other approach is to apply the alternating direction method of multipliers (ADMM) framework, e.g., \cite{zhang2014asynchronous, chang2016asynchronous} has developed ADMM algorithm on server-worker network, \cite{li2019communication, zhang2018admm, shi2014linear} applied ADMM on general decentralized network, and \cite{bastianello2020asynchronous} developed a relaxed ADMM algorithm for networks with potential link failures for strongly convex losses.}
The last approach is the (proximal) primal-dual algorithm \cite{hajinezhad2019perturbed, yi2021linear} which features a primal-dual descent-ascent method developed by viewing \eqref{eq:origin_problem} as a consensus-constrained optimization problem. The primal-dual framework also generalizes primal-only schemes such as EXTRA, grading tracking, and have been demonstrated to exhibit fast convergence under mild conditions \cite{chang2020distributed}.

For application scenarios such as wireless sensor networks, the communication channels between agents can be prone to \emph{(i) link failures}, \emph{(ii) bandwidth limitation}, and \emph{(iii) noise perturbation}. To handle optimization problems such as \eqref{eq:origin_problem}, an important goal is to design robust strategy for decentralized optimization algorithms.
The case of link failures, as modeled using time-varying networks, has been studied in \cite{nedic2014distributed, nedic2017achieving} for DGD and gradient tracking methods, \update{yet their results are limited as these algorithms require deterministic gradients.}
For bandwidth limitation, earlier works
\cite{wen2017terngrad, de2015taming, dean2012large} have combined distributed optimization algorithms with quantized communication that achieves acceptable performance in numerical experiments, \cite{seide20141, richtarik2021ef21} have studied error feedback subroutines with theoretically guaranteed exact convergence. More recently, \cite{reisizadeh2019exact, magnusson2020maintaining, liu2021linear, zhao2022beer} considered modifications of DGD and gradient tracking algorithms with error feedback subroutines, \cite{koloskova2019decentralized, koloskova2019decentralizedb, yau2022docom, xie2024communication} have studied their stochastic gradient extensions, and \cite{yau2024fully} developed a primal-dual algorithm that supports random with sparsified communication. 
For noisy perturbation in communication, \cite{srivastava2011distributed, reisizadeh2023dimix} considered a two-timescales update mechanism to modify the DGD algorithm for controlling the noise variance.
With a similar motivation, \cite{nassif2024differential, michelusi2022finite, nassif2023quantization} have studied the information theoretical limits under finite-bit transmission induced by noisy channels, \update{yet the latter require error-free binary communication channels}. 

\begin{table*}[t]
    \centering
    \resizebox{.98\linewidth}{!}{
    \begin{tabular}{l|c c c c c c}
    \toprule 
    Prior Works & \makecell{No Require.~on \\ Bdd Heterogeneity}  & \makecell{Time-varying or \\ Random Graph} & Compression & Noisy Comm. & \makecell{Convergence Rate \\ (Noiseless Comm.)} & \makecell{Convergence Rate \\ (Noisy Comm.)}\\
    \midrule
    DSGD \cite{koloskova2020unified} & \ding{55} & $\checkmark$ & \ding{55} & \ding{55} & $\mathcal{O}(\bar{\sigma}/\sqrt{nT})$ %$\mathcal{O}\left(\frac{\bar{\sigma}}{\sqrt{nT}} + \frac{1}{T}\right)$ 
    & / \\[0.1cm]
    GT \cite{liu2024decentralized} & $\checkmark$ & \ding{55} & \ding{55} & \ding{55} & $\mathcal{O}(\bar{\sigma}/\sqrt{nT})$ & / \\[.1cm]
    % BEER \cite{zhao2022beer} & not required &  restricted & contractive & \ding{55} & \haoming{$\mathcal{O}(1/T+\bar{\sigma}^2)$} &/ \\[0.1cm]
    CHOCO-SGD \cite{koloskova2019decentralized} & \ding{55} & \ding{55} & $\checkmark$ & \ding{55} & $\mathcal{O}(\bar{\sigma}/\sqrt{nT})$ &/ \\[0.1cm]
    DIMIX \cite{reisizadeh2023dimix} & \ding{55} & $\checkmark$  & $\checkmark$ & $\checkmark$ & / & $\mathcal{O}(T^{-1/3}\ln{T})$\\[0.1cm]
    CP-SGD \cite{xie2024communication} & $\checkmark$ & \ding{55} & $\checkmark$ & \ding{55} & $\mathcal{O}(\bar{\sigma}/\sqrt{nT})$ &/ \\[0.1cm]
    FSPDA-SA \cite{yau2024fully} & $\checkmark$ & $\checkmark$  & \ding{55}$^\dagger$ & \ding{55} & $\mathcal{O}(\bar{\sigma}/\sqrt{nT})$  &/ \\[0.1cm]
    \rowcolor{gray!10} {\bf This Work} & $\checkmark$ & $\checkmark$  & $\checkmark$ & $\checkmark$ & $\mathcal{O}(\bar{\sigma}/\sqrt{nT})$ &$\mathcal{O}(n^{-1/2}T^{-1/3})$ \\
    \bottomrule
    \end{tabular}}
    \vspace{.2cm}
    \caption{Comparison of {\algoname} to state-of-the-art algorithms with compressed communication in non-convex stochastic optimization. Convergence rate is measured w.r.t.~the smallest $\mathbb{E}[ \| \nabla f(\prm) \|^2 ]$ found in $T$ iterations. ($^\dagger$FSPDA supports noiseless random sparsification as the only means of communication compression.)}\vspace{-.4cm}
    \label{tab:compare}
\end{table*}

In addition to focusing on only one or two specific issues in unreliable networks, \update{there are also restrictions of the above decentralized algorithms with their theoretical guarantees that can lead to poor practical performance}. For instance, the convergence of CHOCO-SGD \cite{koloskova2019decentralized} requires a bounded heterogeneity assumption, \update{i.e., $\sup_{\prm \in \mathbb{R}^d} \| \nabla f_i( \prm) - \nabla f_j(\prm) \|$ is bounded}, and converges slowly when agent's data distribution are highly heterogeneous; CP-SGD \cite{xie2024communication} exhibits faster convergence, but does not support time-varying graphs and noisy compressed transmission; FSPDA \cite{yau2024fully} only supports random sparsification as compression scheme with noiseless communication.
% ; the DIMIX algorithm \cite{reisizadeh2023dimix} requires exact gradients. 

Limitations of the existing works have motivated the current work to design fast-converging decentralized optimization algorithms that are simultaneously robust to unreliable networks.  
We notice a number of challenges. To combat data heterogeneity, one cannot rely on simple schemes such as the plain DGD. Instead, a possible solution is to utilize the primal-dual (PD) optimization framework that can naturally lead to a decentralized algorithm which converges regardless of the degree of data heterogeneity.
However, the PD algorithm design lacks a natural recipe to deal with link failures (time varying networks) and noisy communication as the algorithm is not integrated with an explicit consensus subroutine \update{that one can substitute with as in the design of CHOCO-SGD \cite{koloskova2019decentralized}}.

This paper designs a new decentralized algorithm integrating two ingredients \update{that are first applied to communication efficient optimization}: \emph{(A)} a majorization-minimization (MM) procedure and \emph{(B)} a two-timescale updating scheme for average consensus. 
The first component allows for randomness due to link failure in the communication network and separates the decision variables and the variables to be transmitted. 
\update{Our MM approach differs from that of \cite{di2016next} as we focus on improving the resiliency of decentralized algorithm over unreliable networks.}
The second component is inspired by the \emph{nonlinear gossiping} framework \cite{mathkar2016nonlinear} which replaces the classical linear average consensus map with a suitably designed nonlinear map for average consensus. The general idea is to observe that the nonlinear map converges at a faster rate than the stochastic approximation step in a consensus algorithm. 
In our case, we design the said nonlinear map with a compression operator applied to a \emph{compression error vector} to combat with limited bandwidth, while simultaneously allow for additive noise in the operation that handles noisy channels. 

This paper aims at developing a stochastic
primal-dual algorithm for \eqref{eq:origin_problem} that is robust to unreliable networks.
% \cite{hajinezhad2019perturbed, chang2020distributed}, and through the classic majorization-minimization framework \cite{sun2016majorization} to separate compressed communication updates and stochastic gradient descent step. 
Our contributions are summarized as: 
\begin{itemize}
    \item We propose the {\bf T}wo-t{\bf i}mescale {\bf Co}mpressed stochastic {\bf P}rimal-{\bf D}ual ({\algoname}) algorithm. The {\algoname} algorithm follows a two-level update which separates \emph{communication} and \emph{optimization} steps, while handling them as a single-loop algorithm with two timescale updates. To our best knowledge, this is the first stochastic primal-dual algorithm for decentralized optimization on random graphs that supports general contractive compressors. 
    % we shall mention in earlier part of the intro that prior works either have: (i) PD on random graphs, or (ii) nonlinear compression on static graphs
    \item To incorporate compression into the decentralized algorithm, we develop a stochastic majorization-minimization (MM) procedure which suggests agents to transmit the compressed difference terms. This offers a new perspective for understanding the error feedback mechanism developed in \cite{koloskova2019decentralized} and draws connection to the nonlinear gossiping algorithm \cite{mathkar2016nonlinear}. We believe that this observation is of independent interest. 
    \item For optimization problems with continuously differentiable (possibly non-convex) objective functions, we show that with noiseless compressed transmission, the {\algoname} algorithm converges in expectation -- within $T$ iterations, it finds a solution whose squared gradient norm is ${\cal O}( 1 / \sqrt{nT} )$. The rate is comparable to that of a centralized SGD algorithm asymptotically. With noisy compressed transmission, we show that the {\algoname} algorithm finds a solution whose squared gradient norm is ${\cal O}( 1 / T^{1/3})$, within $T$ iterations. The rate is faster than \cite{reisizadeh2023dimix}, and does not require the bounded heterogeneity assumption.
\end{itemize} 
Table \ref{tab:compare} compares {\algoname} to existing algorithms. We remark that \update{existing works either impose restrictions on the random graph structure and compression scheme, or requiring strong assumptions on data heterogeneity, or they do not support noisy communication. These challenges are overcome simultaneously with the proposed {\algoname} algorithm.}
% the latter has the fastest convergence rate with the least restrictions on compressor design and data requirement. 
Compared to the conference version \cite{liu2025two}, we have extended the {\algoname} algorithm to work with stochastic gradients and unreliable networks, together with a new set of experiments. 
% Lastly, we present numerical experiments on realistic settings which demonstrate that the {\algoname} algorithm can outperform current state-of-the-art algorithms in terms of communication cost-performance trade-off.

The rest of this paper is organized as follows. Section \ref{sec:Problem Statement} introduces the primal-dual framework of distributed optimization. Section \ref{sec:2TS Compressed Stochastic PDA} provides the derivation of two-timescales update and majorization-minimization, and develops the {\algoname} algorithm. Section \ref{sec:Convergence analysis} presents the convergence analysis of TiCoPD. Section \ref{sec:Numerical Experiments} provides the numerical experiments. Finally, Section \ref{sec:Conclusion} concludes the paper.

\emph{Notations.} 
Let $\wtk \in \mathbb{R}^{d \times d}$ be a symmetric matrix, the $\wtk$-weighted inner product of vectors ${\bf a,b} \in \mathbb{R}^d$ is denoted as $\dotp{{\bf a}}{{\bf b}}_{\wtk} := {\bf a}^\top \wtk {\bf b}$. \update{For positive (semi-)definite matrix $\wtk$, the $\wtk$-weighted (semi-)norm is denoted by $\| {\bf a} \|_{\wtk}^2 := {\bf a}^\top \wtk {\bf a}$.}
% The subscript notation is omitted for ${\bf I}$-weighted inner products. 
% For any square matrix ${\bf X}$, $( {\bf X} )^\dagger$ denotes its pseudo inverse. 
Lastly, $(\cdot)^\dagger$ is the Moore-Penrose inverse of the matrix,
$\lfloor \cdot \rfloor$ is the floor function,
$\otimes$ is the Kronecker product.
% \htwai{need to define what is weighted norm, etc.}

\section{Problem Statement} \label{sec:Problem Statement}
% This section introduces necessary notations for the setup of decentralized optimization. We also discuss how to derive a decentralized algorithm for \eqref{eq:origin_problem} from primal-dual optimization.
% Under the distributed optimization setting of \eqref{eq:origin_problem}, we are further interested in stochastic optimization setting.
Let $G = (V,E)$ be an undirected and connected graph of the agent set $V = \{1,...,n\}$, and $E \subseteq V \times V$ is the set of usable edges between the $n$ agents. The graph $G$ is also endowed with an incidence matrix $\bA \in \mathbb{R}^{|E| \times n}$ and we shall denote $\wta := \bA \otimes {\bf I}_d$, where $\otimes$ denotes the Kronecker product.

We consider a setting where both the optimization problem \eqref{eq:origin_problem} and the graph are stochastic. Let the probability space be $(\Omega,\mathcal{F},\mathbb{P})$, where $\Omega := \Omega_1 \times \cdots \times \Omega_n \times \Omega_A$ is the sample space.
For each $i \in V$, $\xi_i \in \Omega_i$ admits the distribution $\mathbb{P}_i$ that leads to the $i$th local objective function:
\begin{equation}
	f_i( \prm_i ) := \mathbb{E}_{\xi_i \sim \data_i } [ f_i( \prm_i; \xi_i ) ].
\end{equation}
With a slight abuse of notation, we also used $f_i$ as the measurable function $f_i: \mathbb{R}^d \times \Omega_i \to \mathbb{R}$ such that its second argument takes a random sample.
To define the stochastic graph model, we consider $\xi_a \sim \mathbb{P}_A$ and define a subgraph selection diagonal matrix ${\bf I}( \xi_a ) \in \{0,1\}^{|E| \times |E|}$.
The corresponding subgraph is then denoted as ${\cal G}(\xi_a) = (V,E(\xi_a))$ with $E(\xi_a)\subseteq E$.
We have $\mathbb{E}_{ \xi_a \sim \mathbb{P}_A } [ {\bf I}( \xi_a ) ] > c {\bf I}_{|E|}$ for some $c>0$ such that each edge is selected with a non-zero probability. 
To simplify notation, we let $\Prm = [ \prm_1; \cdots; \prm_n ] \in \mathbb{R}^{nd}$.

Let $\wta(\xi_a) = ( {\bf I}(\xi_a) \bA ) \otimes {\bf I}_d$ be the incidence matrix for a \emph{random subgraph} of $G$, we observe that \eqref{eq:origin_problem} is equivalent to the stochastic equality constrained optimization problem:
\begin{equation} 
\min_{ \Prm \in \mathbb{R}^{nd}} ~ % F(\Prm) := 
    \frac{1}{n} \sum_{i=1}^n f_i( \prm_i ) \quad \text{s.t.} \quad  \mathbb{E}_{\xi_a \sim \mathbb{P}_A } [\wta(\xi_a)] \Prm = {\bf 0} . \label{eq:main_problem}
\end{equation}
% where we have defined $\wta(\xi_a) = ( {\bf I}(\xi_a) \bA ) \otimes {\bf I}_d$ to be the incidence matrix for a \emph{random subgraph} of $G$. % where $\otimes$ is Kronecker product.
To facilitate our subsequent discussions, we denote $\nabla {\bf f}(\Prm;\xi ) \in \mathbb{R}^{nd}$ as the stack of the local stochastic gradients $\nabla f_i( \prm_i; \xi_i )$, and $\nabla {\bf f}(\Prm ) \in \mathbb{R}^{nd}$ as the stack of the local expected gradients $\nabla f_i(\prm_i)$. Moreover,
    \begin{align}
        &f( \avgprm ) := \frac{1}{n} \sum_{i=1}^n f_i( \avgprm ),~~\nabla f( \avgprm) := \frac{1}{n} \sum_{i=1}^n \nabla f_i( \avgprm ),
    \end{align}
 as the global objective function and exact global gradient evaluated on a common decision variable $\avgprm \in \mathbb{R}^d$. 

% \htwai{can't think of how to introduce $\xi_q$ now -- let's add it later.}
% where $ ( \xi,\xi_a,\xi_q):= ( \xi_1,\ldots,\xi_n,\xi_a,\xi_{q_1},\ldots,\xi_{q_n}) \in \Omega := \Omega_1 \times \cdots \times \Omega_n \times \Omega_A\times\Omega_{q_1} \times \cdots \times \Omega_{q_n}$ is the sample.

% Our aim is to tackle the following equivalent problem to \eqref{eq:origin_problem}:
	
% 	where $\Prm = [ \Prm_1^\top ~ \Prm_2^\top ~\cdots ~ \Prm_n^\top]^\top$ and each $\Prm_i \in \mathbb{R}^d$ stands for the local decision variable of each agent in the network. The random variable $\xi_a \in \Omega_A$ determines the subgraph of ${\cal G}$ such that $\wta(\xi_a) = ( {\bf I}(\xi_a) \bA ) \otimes {\bf I}_d$, where $\otimes$ is Kronecker product, and ${\bf I}(\xi_a) \in \{0,1\}^{|E| \times |E|}$ is a binary diagonal matrix with $[{\bf I}(\xi_a)]_{i,i} = 1$ if and only if the $i$th edge is selected in the induced subgraph. The corresponding random subgraph is denoted as ${\cal G}(\xi_a^t) = (V,E(\xi_a^t)), E(\xi_a^t)\subseteq E$.
%     The objective function $f_i : \mathbb{R}^d \rightarrow \mathbb{R}$ are continuously differentiable and stochastic, defined as:
% 	\begin{equation}
% 		f_i( \Prm_i ) := \mathbb{E}_{\xi_i \sim \data_i } [ f_i( \Prm_i; \xi_i ) ],
% 	\end{equation}
% 	where $\data_i$ is a distribution defined on $\Omega_i$ representing the (private) data held by agent $i$. Note that with a slight abuse of notation, we have also defined $f_i$ as the measurable function $f_i: \mathbb{R}^d \times \Omega_i \to \mathbb{R}$. 

\subsection{Proximal Gradient Primal-dual Algorithm (Prox-GPDA)} \label{sec:pda}
\update{To introduce the principles of primal-dual algorithms and motivate {\algoname}, we consider extending Prox-GPDA \cite{hajinezhad2019perturbed} to the stochastic-constrained problem \eqref{eq:main_problem}}. 
We also highlight the difficulties in adapting Prox-GPDA and similar existing algorithms for unreliable networks.

% We consider applying a (proximal) {primal-dual} algorithm for \eqref{eq:main_problem} and discuss how it naturally leads to a decentralized optimization algorithm. Meanwhile, we highlight the difficulties in adapting the algorithm for unreliable networks. 

\update{Our development begins by writing the augmented Lagrangian function of \eqref{eq:main_problem}. We focus on a sampled version of the augmented Lagrangian function}
% As the first step, we consider the following sampled augmented Lagrangian function of \eqref{eq:main_problem}: \htwai{Why not $\theta/2$?}
\begin{equation} \label{eq:sample_lag}
{\cal L}( \Prm, \dprm; \xi ) := \frac{1}{n} \sum_{i=1}^n f_i( \prm_i ;\xi_i ) + \dprm^\top \wta(\xi_a) \Prm + \update{\theta} \|\wta(\xi_a) \Prm\|^2 ,
\end{equation} 
where $\dprm = ( \dprm_i )_{i \in E} \in \mathbb{R}^{ |E| d }$ is the Lagrange multiplier for the equality constraint and $\theta > 0$ is a regularization parameter. 
\update{Notice that the saddle points of the expected Lagrangian function $\mathbb{E}_{\xi} [ {\cal L}( \Prm, \dprm; \xi ) ]$ are the KKT points of \eqref{eq:main_problem} \cite{bertsekas1997nonlinear}.}

\update{The Prox-GPDA \cite{hajinezhad2019perturbed} can be derived from updating the primal-dual variables through finding the saddle points of $\mathbb{E}_{\xi} [ {\cal L}( \Prm, \dprm; \xi ) ]$. At iteration $t$,} we let $\xi^{t+1} \equiv ( \{\xi_i^{t+1}\}_{i=1}^n , \xi_a^{t+1}) \sim \mathbb{P}$ and $(\Prm^t, \dprm^t)$ be the $t$th primal-dual iterates. Linearizing $f_i(\cdot ; \xi_i^{t+1})$ and including the proximal term suggested in \cite{hajinezhad2019perturbed}, the primal-dual gradient descent-ascent steps on \eqref{eq:sample_lag} can be expressed as
\begin{align} 
\Prm^{t+1} &= \arg\min_{\Prm \in \mathbb{R}^{nd}}   \nabla {\bf f}(\Prm^t;\xi^{t+1})^\top (\Prm-\Prm^t)  +\Prm^\top \wta(\xi_a^{t+1})\dprm^t  \notag  \\
&  \hspace{-.2cm}  \quad + \update{\theta} \|\wta(\xi_a^{t+1}) \Prm \|^2 
% + \frac{1}{2 \widetilde{\alpha}} \| \Prm - \Prm^t \|^2 
+ \frac{1}{2} \| \Prm - \Prm^t \|_{ \widetilde{\alpha}^{-1} {\bf I} + \update{2 \theta} {\bf B}(\xi_a^{t+1}) }^2 , \label{eq:x_update_origin} \\
\dprm^{t+1} &= \arg\min_{\dprm \in \mathbb{R}^{nd} }  -\dprm^\top \wta(\xi_a^{t+1}) \Prm^t +\frac{1}{2\eta} \| \dprm - \dprm^t \|^2 , \label{eq:lambda_update_origin}
\end{align}
\update{where $\tilde{\alpha}>0, \eta>0$ are the step sizes.}
% The $\Prm$-update in \eqref{eq:x_update_origin} involves the matrix inverse $( {\bf I}_{nd} + \wta(\xi_a^{t+1})^\top \wta(\xi_a^{t+1}) )^{-1}$ which does not admit a decentralized implementation. As a remedy, \cite{hajinezhad2019perturbed} proposed to include a proximal term $\frac{1}{2} \| \Prm - \Prm^t \|_{ {\bf B}(\xi_a^{t+1}) }^2$ 
To simplify notations, we have also defined \update{the positive semidefinite matrix}
${\bf B} (\xi_a)= 2 \, \text{Diag}(\wta^\top \wta(\xi_a))-\wta^\top \wta(\xi_a)$.
% and $\wtl(\xi_a) := \wta^\top \wta( \xi_a )$ is the Laplacian matrix of the subgraph.

\update{Observe that the sub-problems \eqref{eq:x_update_origin}, \eqref{eq:lambda_update_origin} are respectively $\tilde{\alpha}^{-1}$, $1/(2\eta)$ strongly convex,}
we obtain the following extended Prox-GPDA recursion: 
\begin{align} \label{eq:primal-dual update}
		\Prm^{t+1} &=  {\bf D}(\xi_a^{t+1})^{-1} \big( ({\bf I}_{nd} +\widetilde{\alpha} {\bf B}(\xi_a^{t+1}) ) \Prm^t \\
		& \qquad \qquad -\widetilde{\alpha}\nabla {\bf f}( \Prm^t;\xi^{t+1}) -\widetilde{\alpha}\wta(\xi_a^{t+1})^\top \dprm^t  \big) \notag \\
		\dprm^{t+1} &=  \dprm^t + \eta  \wta(\xi_a^{t+1}) \Prm^t,  \label{eq:primal-dual lambda update}
\end{align}
where ${\bf D}(\xi_a^{t+1}) = \update{4}\widetilde{\alpha}\theta \, \text{Diag}(\wta^\top \wta(\xi_a^{t+1}))+{\bf I}_{nd}$ is  diagonal. 
    
Remarkably, \eqref{eq:primal-dual update}, \eqref{eq:primal-dual lambda update} incorporate both \emph{communication} and \emph{optimization} steps in a single update. To see this, for any $i \in \{1, \ldots, n\}$, let $d_{ii}^{t+1}, L_{ii}^{t+1}$ be the $i$th diagonal element of ${\bf D}(\xi_a^{t+1}), \wta^\top \wta(\xi_a^{t+1})$, the local variable can be updated as
	\begin{equation} \notag
		\begin{split}
			& \prm_i^{t+1} = (d_{ii}^{t+1})^{-1} \underbrace{ [ (1 + \update{4} \widetilde{\alpha} \theta L_{ii}^{t+1} ) \prm_i^t - \widetilde{\alpha} \nabla f_i( \prm_i^t ; \xi_i^{t+1} ) ] }_{\text{\emph{(local) optimization step}}} \\
			& - (d_{ii}^{t+1})^{-1} \underbrace{ \left[ \update{2}\widetilde{\alpha} \theta \sum_{j \in {\cal N}_{i}^t} (\prm_j^t - \prm_i^t) + \widetilde{\alpha} \sum_{ (i,j) : j \in {\cal N}_i^t } \dprm_{ (i,j) }^t \right] }_{\text{\emph{communication step}}}
		\end{split}
	\end{equation}
and similar observations can be made for the $\dprm$-update. This modification of Prox-GPDA can be implemented on random subgraphs, although its convergence analysis differs significantly from that of \cite{hajinezhad2019perturbed}; see \cite{yau2024fully}. 

Most importantly, the extended Prox-GPDA \eqref{eq:primal-dual update}, \eqref{eq:primal-dual lambda update} is not suitable for unreliable networks \update{with potentially noisy or imperfect communication}, where it requires the agents to communicate a message of $d$ real numbers on each activated edge at every iteration. \update{
% Even in the case with noiseless communication, the algorithm entails significant communication overhead when $d \gg 1$. 
To resolve these issues, we develop the TiCoPD algorithm in the next section.
}
% That being said, directly incorporating a compression operator into the recursion may result in non-convergence. 
    
	% However, the Prox-GPDA algorithm requires transmitting a $d$-dimensional message on each activated edge at every iteration. This results in a significant use of bandwidth when $d \gg 1$. To improve communication efficiency, it is desired to incorporate compression routines into the algorithm design, especially with \emph{nonlinear compressors} such as quantization. 
	% To efficiently transmit information, some compression methods such as quantization, top-k sparsification are desired. 
	% In the next section, we propose a compressed algorithm within the primal-dual framework. 
	% Particularly, we consider the case where the compression operator is nonlinear and biased.
	
\section{Two-timescale Compressed Stochastic Primal-Dual (TiCoPD) Algorithm} \label{sec:2TS Compressed Stochastic PDA}
This section develops the {\bf T}wo-t{\bf i}mescale {\bf Co}mpressed stochastic {\bf P}rimal-{\bf D}ual ({\algoname}) algorithm for \update{handling the challenges of decentralized optimization in unreliable networks.}    
% as a perturbed version of \eqref{eq:primal-dual update}, \eqref{eq:primal-dual lambda update}.
% Our key idea is to treat \emph{compressed communication} and \emph{optimization} as updates at the lower and upper levels, respectively. 
The algorithm depends on two ingredients: (i) a majorization-minimization (MM) step that introduces a \emph{surrogate} variable to separate the communication step from the optimization step \update{while handling potential link failures}, (ii) a \emph{two-timescales} update supporting \emph{noisy} and \emph{nonlinearly compressed} update of the surrogate variable \update{for handling the bandwidth limited and noisy communication issue}. 

We develop {\algoname} by revisiting the primal-dual update derived from \eqref{eq:sample_lag}. Observe that the gradient of $|| \wta(\xi_a) \Prm ||^2$ w.r.t.~$\prm_i$ is
\begin{equation} \label{eq:uncom_transmit} \textstyle 
\sum_{j \in {\cal N}_{i} (\xi_a) } ( \prm_i - \prm_j).
\end{equation}
Evaluating the above necessitates the neighbors' variables $\{ \prm_j \}_{j \in {\cal N}_i(\xi_a)}$. \update{This presents the major communication bottleneck in primal-dual algorithms such as Prox-GPDA.} 
\update{To remedy, our strategy is to sidestep $|| \wta(\xi_a) \Prm ||^2$ through relying on a surrogate sequence $\{ \hat\Prm^t \}_{t \geq 0}$ provided that
% \begin{enumerate}
    % \item 
    $\hat{\Prm}^t$ approximates $\Prm^t$, and
    % \item 
    agent $i$ can acquire the neighbors' surrogate variables $( \hat{\prm}_j^t )_{j \in {\cal N}_i^t}$ efficiently, e.g., via compressed message exchanges.
% \end{enumerate}

In the following, we develop the {\algoname} algorithm in three steps through: (i) assuming that a surrogate sequence $\{ \hat\Prm^t \}_{t \geq 0}$ exists, we employ MM on the augmented Lagrangian function \eqref{eq:sample_lag} to derive the main PD algorithm while transferring the communication burden to that of $\hat{\Prm}^t$, (ii) develop an auxiliary recursion to construct the surrogate sequence $\{ \hat\Prm^t \}_{t \geq 0}$ over unreliable networks, (iii) finally, the two ingredients are combined to yield the {\algoname} algorithm that employs two-timescales updates to guarantee convergence.
% We will illustrate how to construct such sequence later. 
% Equipped with $\{ \hat\Prm^t \}_{t \geq 0}$, we first note that a communication efficient algorithm can be developed following the primal-dual update framework described in Sec.~\ref{sec:pda}, via a majorization-minimization step:
}

\vspace{.2cm}
\noindent{\bf Step 1. Majorization-Minimization.}
% To derive an \update{communication efficicent} primal-update, 
\update{We focus on majorizing the $|| \wta(\xi_a) \Prm ||^2$ term in \eqref{eq:sample_lag} to yield a simpler update form in the primal-dual update. Fixing $\Prm^t$, there exists $M \geq 0$ (e.g., with $M = \| \wta \|_2$) such that for any $\Prm \in \mathbb{R}^{nd}$,}
\begin{equation} \notag
    \begin{aligned}
        || \wta(\xi_a) \Prm ||^2 & \leq ( \wta^\top \wta(\xi_a) \Prm^t )^\top ( \Prm - \Prm^t ) + \frac{M}{2} \| \Prm - \Prm^t \|^2.
    \end{aligned}
\end{equation}
\update{Introducing the surrogate variable $\hat{\Prm}^t$ and applying the Young's inequality yield}
\begin{equation} \notag
\begin{aligned}
& ( \wta^\top \wta(\xi_a) \Prm^t )^\top ( \Prm - \Prm^t ) = ( \wta^\top \wta(\xi_a) \hat{\Prm}^t )^\top ( \Prm - \Prm^t ) \\
& \qquad \qquad \qquad \qquad + ( \wta^\top \wta(\xi_a) ( \Prm^t - \hat{\Prm}^t ) )^\top ( \Prm - \Prm^t ) \\
& \leq ( \wta^\top \wta(\xi_a) \hat{\Prm}^t )^\top ( \Prm - \Prm^t ) \\
& \quad + \frac{1}{2} \| \wta^\top \wta(\xi_a) ( \Prm^t - \hat{\Prm}^t ) \|_F^2 + \frac{1}{2} \| \Prm - \Prm^t \|^2.
\end{aligned}
\end{equation}
\update{Applying the above majorization step leads to the following upper bound of \eqref{eq:sample_lag}:
\begin{equation}
\begin{aligned}
& \textstyle \frac{1}{n} \sum_{i=1}^n f_i( \prm_i ;\xi_i^{t+1} ) + \dprm^\top \wta(\xi_a^{t+1}) \Prm + {\rm constant} \\
& \textstyle + {\theta} \Big( \frac{M+1}{2} \| \Prm - \Prm^t \|^2 + ( \wta^\top \wta(\xi_a) \hat{\Prm}^t )^\top \Prm \Big),
\end{aligned}
\end{equation}
where we have omitted the constant terms. Further majorizing $f_i(\cdot)$ with its quadratic upper bound yields the $\Prm^{t+1}$-update:}
% \update{Furthermore, by combining the above majorization and picking $\tilde{\alpha} > 0$ such that ${\bf B}(\xi_a^{t+1}) \preceq \tilde{\alpha}^{-1} {\bf I}$}, \htwai{check this} \haoming{Matrix ${\bf B}(\xi_a^{t+1})$ is in Prox-GPDA, there is no ${\bf B}(\xi_a^{t+1})$ in our algorithm TiCoPD}
% \htwai{Oh yes... In this case, shall we write down the majorized augmented Lagrangian and deduce the update in (12)?} we obtain the $\Prm^{t+1}$-update for the {\algoname} algorithm:
\begin{equation} \label{eq:x-update1}
\begin{aligned} 
		\Prm^{t+1} &= \argmin_{\Prm \in \mathbb{R}^{nd}} 
		\Prm^\top ( \nabla {\bf f}(\Prm^t;\xi^{t+1}) + \wta(\xi_a^{t+1})\dprm^t ) \\
        % \nabla {\bf f}(\Prm^t;\xi^{t+1})^\top (\Prm-\Prm^t)  + \Prm^\top \wta(\xi_a^{t+1})\dprm^t  \\
		%  + \theta ( \wta^\top \wta(\xi_a^{t+1}) \hat{\Prm}^t )^\top ( \Prm - \Prm^t ) + \frac{\theta \widetilde{\alpha} (\m+1) + 1 }{ 2 \widetilde{\alpha} } \| \Prm - \Prm^t \|^2 .
		& \quad + \theta \Prm^\top \wta^\top \wta(\xi_a^{t+1}) \hat{\Prm}^t  + \frac{\theta \widetilde{\alpha} (\m+1) + 1 }{ 2 \widetilde{\alpha} } \| \Prm - \Prm^t \|^2 \\
        & \update{= \Prm^t - \frac{\widetilde{\alpha}}{\theta\widetilde{\alpha}(\m+1)+1} \big[ \nabla {\bf f}(\Prm^t;\xi^{t+1})  }\\
        & \qquad \qquad \update{+ \wta(\xi_a^{t+1})\dprm^t +\theta \wta^\top \wta (\xi_a^{t+1})\hat{\Prm}^t \big].}
\end{aligned}
\end{equation}
For the dual-update, we can similarly replace $\Prm^t$ with the surrogate variable $\hat{\Prm}^t$ to obtain:
\begin{equation}
\begin{aligned}
\dprm^{t+1} & = \argmin_{\dprm \in \mathbb{R}^{d|E|}} \frac{1}{2\eta} \| \dprm-\dprm^t \|^2 -\dprm^\top \wta(\xi_a^{t+1})\hat{\Prm}^t \\
& \update{= \dprm^t + \eta \wta(\xi_a^{t+1}) \hat{\Prm}^t. }
\label{eq:lambda-update1} 
\end{aligned}
\end{equation}

Finally, using the variable substitution $\widetilde{\dprm}^t = \wta^\top \dprm^t$, \update{we obtain the main algorithm} for the primal-dual updates:
%  \begin{align}
% \Prm^{t+1} =&  \Prm^t - \alpha \big[ \nabla {\bf f}(\Prm^t;\xi^{t+1}) + \widetilde{\dprm}^t +\theta \, \wta^\top \wta (\xi_a^{t+1})\hat{\Prm}^t \big], \notag  \\
% \widetilde{\dprm}^{t+1} = &\widetilde{\dprm}^t + \eta \, \wta^\top \wta(\xi_a^{t+1}) \hat{\Prm}^t, \label{eq:pdc-recur}
% \end{align}
% \begin{equation}  \label{eq:pdc-recur}
%     \begin{aligned}
%         \Prm^{t+1} =& \underbrace{\Prm^t - \alpha \big[ \nabla {\bf f}(\Prm^t;\xi^{t+1}) + \widetilde{\dprm}^t\big]  }_{\text{\emph{(local) optimization step}}} -\alpha\theta \, \underbrace{ \wta^\top \wta (\xi_a^{t+1})\hat{\Prm}^t }_{\text{\emph{communication step}}}  \\
%         \widetilde{\dprm}^{t+1} = &\widetilde{\dprm}^t + \eta \,  \underbrace{ \wta^\top \wta (\xi_a^{t+1})\hat{\Prm}^t}_{\text{\emph{communication step}}} ,
%     \end{aligned}
% \end{equation}
\update{\begin{equation} \label{eq:pdc-recur}
    \begin{aligned}
        \prm^{t+1}_i =& \underbrace{\prm^t_i - \alpha \big[ \nabla f_i(\prm^t_i;\xi^{t+1}_i) + \widetilde{\dprm}^t_i\big]  }_{\text{\emph{(local) optimization step}}} \\
        &-\alpha\theta \underbrace{\sum_{j\in\mathcal{N}_i^t(\xi_a^{t+1})}(\hat{\prm}^t_j - \hat{\prm}^t_i) }_{\text{\emph{communication step}}}  \\
        \widetilde{\dprm}^{t+1}_i = &\widetilde{\dprm}^t_i + \eta \,  \underbrace{\sum_{j\in\mathcal{N}_i^t(\xi_a^{t+1})}(\hat{\prm}^t_j - \hat{\prm}^t_i)}_{\text{\emph{communication step}}} ,
    \end{aligned}
\end{equation}}with $\alpha = (\frac{1}{\widetilde{\alpha}}+\theta(\m+1) )^{-1}$. 
\update{Comparing with \eqref{eq:primal-dual update}, \eqref{eq:primal-dual lambda update},} it can be seen that the updates of $\Prm^t$ and $\widetilde{\dprm}^t$ require only sharing the surrogate variables $\hat{\Prm}^t$ \update{with neighboring agents, in lieu of the original variables $\Prm^t$. Thus if $\hat{\Prm}^t$ can be communicated effectively between agents, \eqref{eq:pdc-recur} would be a robust PD algorithm for decentralized optimization}. 
% Next, we will design a procedure to enable the latter with compressed communication over noisy channel. 

\vspace{.2cm}
\noindent{\bf Step 2. Surrogate Sequence $\{ \hat{\Prm}^t \}_{t \geq 0}$.}
Our next endeavor is to construct the sequence of surrogate variables $\{ \hat{\Prm}^t \}_{t \geq 0}$ that approximates $\{ \Prm^t \}_{t \geq 0}$ \update{with communication implemented over the unreliable networks}. To this end, we are inspired by the idea of nonlinear gossiping \cite{mathkar2016nonlinear} which splits an exact (consensus) operator into multiple small steps of nonlinear operations, while running in parallel with the main recursion; also see \cite{srivastava2011distributed,reisizadeh2023dimix} for related ideas.
In particular, we shall construct the surrogate variables using a recursion that tracks $\{ \Prm^t \}_{t \geq 0}$ and runs in parallel with \eqref{eq:pdc-recur}. Meanwhile, the design of this recursion should be bandwidth constrained such that it has to be implementable using compression. 

The bandwidth constraint of unreliable networks forbids us from setting $\hat{\Prm}^t = \Prm^t$ directly \update{as it requires $\{ \prm_i^t \}_{i=1}^n$ to be available at all agents}. Instead, we require the algorithm to only transmit \emph{compressed} data between agents. As a canonical example, we introduce the randomized quantization operator \cite{alistarh2017qsgd} which is represented as $\mathrm{qsgd}_s: \mathbb{R}^d \times \mathbb{R}^d \to \mathbb{R}^d$ such that:
\begin{equation} \label{eq:qsgd}
	\mathrm{qsgd}_s( \prm ; \xi_{q_i})=\frac{\|\prm\|}{s\tau}\cdot \mathrm{sign}(\prm) \odot \left\lfloor s\frac{|\prm|}{\|\prm\|}+\xi_{q_i} \right\rfloor\mathrm{~,}
\end{equation}
where $\odot$ denotes element-wise product, $s > 0$ is the number of precision levels, $\tau = 1 + \min\{ d/s^2, \sqrt{d}/s \}$ is a scaling factor and $\xi_{q_i} \sim {\cal U}[0,1]^d$ represents the dithering noise. 
Importantly, $\mathrm{qsgd}_s(\cdot)$ can be implemented with an \emph{encoder-decoder} architecture. On the transmitter's side, the \emph{encoder} compresses the $d$-dimensional input vector into a $d \log_2 s$-bits string alongside with the norm of vector $\| {\bm x} \|$. 
% The latter can be transmitted on bandwidth limited channels. 
On the receiver's side, a \emph{decoder} converts the received bits into a quantized real vector in \eqref{eq:qsgd}. 
There are also a number of alternatives to the compressors like \eqref{eq:qsgd} -- including sparsifier that takes the top/random subset of coordinates from $\{1,...,d\}$ \cite{koloskova2019decentralized}, adaptive quantizer whose precision can be adjusted according to the input messages $\| {\bm x} \|$ \cite{michelusi2022finite}, etc.

\update{Unreliable networks may also introduce {noise} during the information transmission step between agents. For example, during the retrieval of $\mathrm{qsgd}_s(\cdot)$ at the decoder, the received bits may be errorneous and the full-precision scaling factor $\| {\bf x} \|$ may be distorted. 
In the latter case, suppose that $\prm_i$ is the intended message to be transmitted, we model the received message as $\mathrm{qsgd}_s ( \prm_i ;\xi_{q_i}) + {\bm w}_i$ such that ${\bm w}_i$ is a zero-mean {additive noise}. Observe that the intended message has been distorted twice --- $\mathrm{qsgd}_s(\cdot)$ introduced \emph{compression noise}, ${\bm w}_i$ represents the \emph{communication noise}. As an important feature, the \emph{communication noise} may not vanish when $\| \Prm \| \to 0$, while the \emph{compression noise} will.

To model the general \emph{noisy and contractive} compressed communication process, we consider a compression operator ${Q}:\mathbb{R}^{nd} \times \Omega_q^n \to \mathbb{R}^{nd}$ satisfying:}
% We denote ${Q}:\mathbb{R}^{nd} \times \Omega_q^n \to \mathbb{R}^{nd}$ as a general compression operator, e.g., the $i$th block of its output evaluates to $[{Q}( \Prm ;\xi_q)]_i = \mathrm{qsgd}_s ( \prm_i ;\xi_{q_i})$ when the randomized quantization is used. We assume that these compression operators satisfy the following \emph{noisy and contractive} property:
\begin{assumption} \label{assm:compress}
For any fixed $\Prm \in \mathbb{R}^{nd}$, there exists $\delta \in (0,1]$\update{, $r> 0$} and $\sigma_{\xi} \geq 0$ such that 
% the output of the compression operator is 
\begin{equation} \label{eq:noisy_comp}
Q(\Prm; \xi_q) = \update{\widehat{Q}}( \Prm ; \bar\xi_q ) + {\bm W},
\end{equation}
where  $\mathbb{E}[ {\bm W} ] = {\bm 0}$ and $\mathbb{E}[ \|{\bm W}\|^2 ] \leq \sigma_{ \xi }^2$. Moreover, 
\update{\begin{equation}
\begin{aligned}
& \mathbb{E}_{ \bar{\xi}_q } \| \Prm -  {\widehat{Q} (\Prm; \bar\xi_q)} / {r} \|^2  \le (1-\delta)^2  \| \Prm \|^2 .
\end{aligned}
\label{eq:Q-contraction}
\end{equation}}
\end{assumption}
\noindent\update{In \eqref{eq:noisy_comp}, the \emph{compression noise} and \emph{communication noise} are modeled in $\widehat{Q}(\cdot)$ and ${\bm W}$, respectively. Particularly, when $\widehat{Q}(\cdot) \equiv {\rm qsgd}_s(\cdot)$, \eqref{eq:Q-contraction} holds with $\delta = \frac{1}{2 \tau}$, $r=1$, $\tau = 1 + \min\{ d/s^2, \sqrt{d}/s \}$ \cite{koloskova2019decentralized}. 
% For example, in randomized quantization with communication noise, according to \eqref{eq:noisy_comp}, the $i$th block of the output of $Q(\cdot)$ evaluates to $[{Q}( \Prm ;\xi_q)]_i = \mathrm{qsgd}_s ( \prm_i ;\xi_{q_i}) + {\bm w}_i$ such that ${\bm w}_i$ models the communication noise, and \eqref{eq:Q-contraction} holds for $\widehat{Q}(\cdot) \equiv {\rm qsgd}_s(\cdot)$ with $\delta = \frac{1}{2 \tau}$, $\tau = 1 + \min\{ d/s^2, \sqrt{d}/s \}$, $r=1$ \cite{koloskova2019decentralized}. 
In addition, if $\widehat{Q}(\cdot)$ is the top-$k$ sparsifier, then \eqref{eq:Q-contraction} holds with $\delta = k/d, r=1$, see \cite{koloskova2019decentralized} for other examples. For adaptive quantization, we also note that a similar condition holds in \cite[Definition 3]{michelusi2022finite}. The parameter $r>0$ enables the use of dynamic scaling compressors \cite{liao2024robust}. 
We notice that the above condition is more general than existing works. For example, \cite{reisizadeh2023dimix} require ${Q}(\cdot)$ to be unbiased and the overall error (i.e., compression and communication) to be bounded for any $\Prm$.}

% \noindent Note that the above condition is satisfied with common choices of quantizers including \eqref{eq:qsgd} and top/random sparsifier. As shown in \cite{koloskova2019decentralized}, for \eqref{eq:qsgd} it holds with $\delta = \frac{1}{2 \tau}$, $\tau = 1 + \min\{ d/s^2, \sqrt{d}/s \}$, for top-$k$ sparsifier, it holds with $\delta = k/d$.
% In \eqref{eq:noisy_comp}, ${\bm W}$ models the transmission noise when (compressed) messages are sent between agents\update{, and $\widehat{Q}( \Prm ; \bar\xi_q )$ models the compression operator without transmission noise}. For the randomized quantization example in \eqref{eq:qsgd}, ${\bm W}$ may be used to describe the case when the encoded information bits and the real number $\| {\bm x} \|$ are transmitted over a noisy channel which introduces zero-mean noise.
% For adaptive quantization, we also note that a similar condition holds in \cite[Definition 3]{michelusi2022finite}.

Equipped with $Q(\cdot)$ in \eqref{eq:noisy_comp}, we propose to construct \update{$\hat{\Prm}^t$ which approximates $\Prm^t$} iteratively by 
\begin{equation} \label{eq:xhat_update}
	\hat{\Prm}^{t} = \hat{\Prm}^{t-1} + \gamma Q(\Prm^{t} - \hat{\Prm}^{t-1} ;\xi^{t}_q),
\end{equation}
where $\gamma>0$ is a stepsize parameter controlling the transmission noise in $\hat{\Prm}^t$.
Notice that when $\Prm^t = \Prm$ is fixed, 
% the unique fixed point to \eqref{eq:xhat_update} is where the mean field evaluates to zero as given by $\mathbb{E} [ Q(\Prm - \hat{\Prm}^{t-1} ;\xi^{t}_q) ] = {\bm 0}$, i.e., $\hat{\Prm}^t = \Prm$. 
\update{by Assumption~\ref{assm:compress}, it can be shown that the recursion \eqref{eq:xhat_update} leads to $\lim_{t \to \infty} \mathbb{E} [ \| \hat{\Prm}^t - \Prm \|^2 ] \leq \gamma \sigma_{\xi}^2 / \delta $. 
For example, see the proof of Theorem 2 in \cite{koloskova2019decentralized}.}

% \htwai{Remark that $r>0$ is equivalent to $r=1$.}

\vspace{.2cm}
\noindent {\bf Step 3. {\algoname} Algorithm.} 
\update{For the primal-dual update \eqref{eq:pdc-recur} to converge, we need $\hat{\Prm}^t$ to closely track $\Prm^t$. As discussed after \eqref{eq:xhat_update}, the tracking condition can be achieved if $\Prm^t$ and $\dprm^t$ vary slowly relatively to the convergence of $\hat{\Prm}^t$ towards $\Prm^t$, which can be controlled respectively by the step size parameters $(\alpha, \eta)$ and $\gamma$. Overall, this suggests a \emph{two-timescales update} \cite{borkar1997stochastic} that designates the main algorithm \eqref{eq:pdc-recur} as the `slow' upper level (UL) updates and  the auxiliary recursion \eqref{eq:xhat_update} as the `fast' lower level (LL) updates.}
% \update{Noted that, taking non-convex stochastic distributed optimization as an example, the primal-dual method converges at a rate of ${\cal O}(1/\sqrt{T})$ \cite{hajinezhad2019perturbed}, while the iteration of the noiseless compression operator can converge at a linear rate \cite{koloskova2019decentralized}.}
We propose the {\algoname} algorithm as:
% \begin{equation} 
\begin{align}
	% {\bf UL} 
 & \begin{cases} 
    \Prm^{t+1} =& \hspace{-.2cm} \Prm^t - \alpha \big( \nabla {\bf f}(\Prm^t;\xi^{t+1}) +\widetilde{\dprm}^{t} +\theta \wta^\top \wta(\xi_a^{t+1})\hat{\Prm}^t \big) ,  \\
	\widetilde{\dprm}^{t+1} = & \hspace{-.2cm}\widetilde{\dprm}^t + \eta \wta^\top \wta(\xi_a^{t+1}) \hat{\Prm}^t,
    \end{cases} \notag \\
    % {\bf LL} 
& \begin{cases}
    \hat{\Prm}^{t+1} = \hat{\Prm}^{t} + \gamma Q(\Prm^{t+1} - \hat{\Prm}^{t} ;\xi_q^{t+1}).
    \end{cases} \label{eq:pdc-recur-all}
\end{align}  
\update{We anticipate the algorithm to converge as a \emph{two-timescales} stochastic approximation algorithm by selecting the step sizes as $\gamma \gg \max\{ \alpha, \eta \}$.}
Note that {\algoname} is also a special case of the nonlinear gossip algorithm \cite{mathkar2016nonlinear}. 

We observe from \eqref{eq:pdc-recur-all} that in the UL updates, the $i$-th local variable $\prm_i^t, \widetilde{\dprm}_i^t$ requires aggregating the surrogate variables from \emph{active neighbors} $\sum_{j \in {\cal N}_i^t} ( \hat{\prm}_j^t - \hat{\prm}_i^t )$. Meanwhile, to incorporate communication compression, the LL updates for the surrogate variable require the \emph{(noisy) compressed difference} $Q(\Prm^{t+1} - \hat{\Prm}^{t} ;\xi_{q}^{t+1})$ to be transmitted. Accounting for the fact that the neighbor set may vary at each iteration in the time-varying  graph setting, this can be achieved by storing a local copy for $\hat{\prm}_j^t$, $\hat{\prm}_i^t$ at each neighbor $j \in {\cal N}_i$ at agent $i$, denoted respectively by $\hat{\prm}_{i,j}^t, \hat{\prm}_{j,i}^t$. We summarize the implementation details in the psuedo code Algorithm \ref{alg:TiCoPD}.

\begin{algorithm}[t] 
\caption{{\algoname} Algorithm} 
\begin{algorithmic}[1]\label{alg:TiCoPD}
    \STATE {\bf Input:} Algorithm parameters $\alpha,\theta,\eta,\gamma$, 
    % sample$ \{\xi^{t}\}$, 
    initialization $\Prm^0,\widetilde{\lambda}^0$,$\hat{\prm}^0_{i,j}=\hat{\prm}^0_{j},i,j\in[n]$.
    \FOR{$t = 0,\ldots,T-1$}
    \STATE Draw a sample for time-varying graph $\xi_a^{t+1} \sim \data_a$.
    \FOR{each agent $i\in [n]$}
        \STATE Draw the samples for stochastic gradient $\xi_i^{t+1} \sim \data_i$ and random (noisy) compressor $\xi_{q,i}^{t+1} \sim \data_q$.
        \STATE Receive the compressed differences from $j \in {\cal N}_i^t$ of $Q( \prm_j^t - \hat{\prm}_{i,j}^{t-1} ; \xi_{q,j}^{t} )$ and update
        \[
        \hat{\prm}_{i,j}^{t} = \hat{\prm}_{i,j}^{t-1} + \gamma Q( \prm_j^t - \hat{\prm}_{i,j}^{t-1} ; \xi_{q,j}^{t} ), j \in {\cal N}_i^{t}
        \]
        Note that the above step is also performed at the \emph{transmitter} side of $j \in {\cal N}_i^t$.
        \STATE Perform the primal-dual update:
            \begin{align}
            \prm^{t+1}_i &= \prm^t_i - \alpha \big(\nabla f_i(\prm_i^t ;\xi_i^{t+1}) \notag \\
            &\quad  \textstyle + \widetilde{\lambda}_i^{t}+\theta \sum_{j\in \mathcal{N}_i^{t}} (\hat{\prm}^t_{j,i} -\hat{\prm}^t _{i,j} )\big) ,\\
            \widetilde{\lambda}^{t+1}_i &= \textstyle \widetilde{\lambda}^t_i + \eta \sum_{j\in \mathcal{N}_i^{t}} (\hat{\prm}^t_{j,i} - \hat{\prm}^t _{i,j} ).
            \end{align}
            % \STATE Surrogate sequence update:
            % \begin{equation}
            %     \hat{\Prm}^{t+1}_i = \hat{\Prm}^t_i + \gamma \widehat{Q}(\Prm^{t+1}_i - \hat{\Prm}^t_i;\xi_{q,i}^{t+1}).
            % \end{equation}
            % \STATE Receive compressed messages from neighbors:
            % \begin{align}
            %     \hat{\Prm}_{i,j}^{t+1} = \hat{\Prm}_{i,j}^t + \gamma \widehat{Q}(\Prm^{t+1}_j - \tilde{\Prm}_{i,j}^t;\xi_{q,j}^{t+1}) , \label{eq:xhat_update_time-varying}
            % \end{align}
            % Updates local information:
            % \begin{align}
            %     \tilde{\Prm}_{j,i}^{t+1} = \tilde{\Prm}_{j,i}^{t} + \gamma \widehat{Q}(\Prm^{t+1}_i - \tilde{\Prm}_{j,i}^t;\xi_{q,i}^{t+1}),
            %     \label{eq:xtilde_update_time-varying}
            % \end{align}
            % for any $j \in {\cal N}_i^t$.
        \ENDFOR
    \ENDFOR
\end{algorithmic}
\end{algorithm}

\begin{remark} We remark about an interesting conceptual differences between {\algoname} and CHOCO-SGD \cite{koloskova2019decentralized}. The latter implements a compressed DSGD algorithm through two steps: adaptation and consensus, i.e.,
\begin{align}
& \hat{\Prm}^{t+1}=\hat{\Prm}^{t} + Q ({ {\Prm^{t}-\eta_t \nabla {\bf f} (\Prm^{t}; \xi^{t+1} )}}-\hat{\Prm}^{t}; \xi_q^{t+1} ) , \notag \\
& \Prm^{t+1}={ {\Prm^{t}-\eta_t \nabla {\bf f} (\Prm^{t}; \xi^{t+1} )}}+\gamma ( {\bf W} -{\bf I}) \hat{\Prm}^{t+1}. \notag
\end{align}
where ${\bf W}$ is doubly stochastic and serves as a weighted adjacency matrix for the static graph $G$.
We observe that CHOCO-SGD is a dynamical consensus modification of CHOCO-GOSSIP in \cite{koloskova2019decentralized}. Particularly, it produces a consensual vector that  averages $\Prm^{t}-\eta_t \nabla {\bf f} (\Prm^{t}; \xi^{t+1} )$.
The design can be easily adapted to algorithms that depend on consensual variables such as gradient tracking, see the recent works \cite{zhao2022beer, yau2022docom}. 
Meanwhile, it is challenging to apply the CHOCO-SGD's recipe on primal-dual algorithms such as \eqref{eq:primal-dual update}, \eqref{eq:primal-dual lambda update} where there is no clear `averaging step' in the algorithm. 

The {\algoname} algorithm departed from the CHOCO-SGD's recipe and was derived through a majorization-minimization procedure. 
As a result, {\algoname} takes the variable $\hat{\Prm}$ as a surrogate for $\Prm$ and the involved recursion is a fixed point iteration to achieve $\hat{\Prm} = \Prm$.
To our best knowledge, {\algoname} is the first algorithm to consider such two timescales update in compressed decentralized optimization.
\end{remark}
	
\section{Convergence analysis}  \label{sec:Convergence analysis}
This section establishes the convergence of the {\algoname} algorithm towards a stationary point of \eqref{eq:main_problem} at a sublinear rate. As a preparation step, we first define the average decision variable as
\begin{equation} \label{eq:avgprm} \textstyle
\avgprm^t = \frac{1}{n} \sum_{i=1}^n \prm_i^t = \frac{1}{n} ( {\bf 1}^\top \otimes {\bf I}_d ) \Prm^t,
\end{equation}
and using the consensus error operator $\wtk := ( {\bf I}_n - {\bf 1}{\bf 1}^\top/n ) \otimes {\bf I}_d$, the consensus error can be expressed as
\begin{equation} \textstyle
	\sum_{i=1}^n \| \prm_i^t - \avgprm^t \|^2 = \| \wtk \Prm^t \|^2 = \| \Prm^t \|_{\wtk}^2,
\end{equation}
and $\wtq := ( \wta^\top \rxii \wta )^\dagger$, where $(\cdot)^\dagger$ denotes the Moore-Penrose inverse \update{and $\rxii := \expec{{\bf I}(\xi_a)}$ is the diagonal matrix of edge activation probabilities}. We observe that $\wta^\top \rxii \wta \wtk = \wta^\top \rxii \wta = \wtk \wta^\top \rxii \wta$ and $\wtq \wta^\top \rxii \wta = \wta^\top \rxii \wta \wtq = \wtk$. 

We further make the following assumptions about the objective functions and their stochastic gradients. 
\begin{assumption} \label{assm:lip}
For any $i \in [n]$, the function $f_i$ is $L$-smooth such that
\begin{equation} \label{eq:f_lip}
\|\nabla f_i({\bf x}) - \nabla f_i({\bf y}) \| \leq L \|{\bf x} - {\bf y} \|, ~\forall~ {\bf x},{\bf y} \in \mathbb{R}^{d}.
\end{equation}
There exists $f^\star > -\infty$ such that $f_i( {\bf x} ) \geq f^\star$ for any ${\bf x} \in \mathbb{R}^d$.
\end{assumption}
	\begin{assumption} \label{assm:f_var} For any $i \in [n]$ and fixed ${\bf y} \in \mathbb{R}^d$, there exists $\sigma_i \geq 0$ such that
		\begin{equation} \label{eq:f_var}
			\mathbb{E}[\| \nabla f_i({\bf y}; \xi_i ) - \nabla f_i({\bf y}) \|^2] \leq \sigma_i^2.
		\end{equation}
    Moreover, $\{ \xi_i \}_{i=1}^n$ are mutually independent. To simplify notations, we define $\bar{\sigma}^2=1/n \sum_{i=1}^n \sigma_i^2$.
	\end{assumption}
    \noindent \update{Assumption \ref{assm:lip} is a standard smoothness condition for the continuously differentiable objective function, and Assumption \ref{assm:f_var} requires that stochastic gradient has bounded variance.}
	
	\begin{assumption} \label{assm:rand-graph}
		There exists constants $\rrhomax \ge \rrhomin> 0$ and $\rhomax \ge \rhomin >0$ such that 
		\begin{equation} \label{eq:q_ineq}
			\begin{aligned} 
				& \rrhomin \wtk \preceq \wta^\top \rxii \wta  \preceq \rrhomax \wtk , \\
                & \rhomin \wtk \preceq \wta^\top \wta \preceq  \rhomax \wtk .
			\end{aligned}
		\end{equation}
		% Furthermore, we set $\rho_{\max} = \rho( \wta^\top \rxii \wta )$.
	\end{assumption}
	\noindent Notice that Assumption~\ref{assm:rand-graph} holds if ${\rm diag}(\rxii) > {\bf 0}$, i.e. when each edge is selected with a positive probability.
    % see \cite{yi2021linear}, \cite[Lemma 2]{yi2018distributed}. 
    Furthermore, as a consequence, it holds $\rrhomax^{-1} \wtk \preceq \wtq \preceq \rrhomin^{-1} \wtk$. 
	
	\begin{assumption} \label{assm:graph_var}
		For any fixed $\Prm \in \mathbb{R}^{nd}$, 
		\begin{equation}
			\mathbb{E} \left[ \|\wta^\top \wta(\xi_a) \Prm - \wta^\top \rxii \wta \Prm \|^2 \right] \leq \sigma_A^2 \|\wta \Prm \|^2_{\bf R} . \label{eq:graph_var}
		\end{equation}
	\end{assumption}
\noindent It bounds the variance of $\wta^\top \wta(\xi_a) \Prm$. \update{The assumption can be guaranteed as long as the probability of selecting an edge in the random graph is strictly positive for every edge, including the extreme case when only one edge is active in the random graph. In particular, it can be satisfied with the variance parameter $\sigma_A^2 = \expec{ \| \wta^\top (\xi_a) \rxi^{-1} - \wta^\top \|^2 \| \rxi \|}$.}
% \htwai{The explanation of $\sigma_A$ needs to be revised. $\leftarrow$ What is this sentence for?}
	
	% \begin{assumption} \label{assm:compress}
	% 	For any fixed $\Prm \in \mathbb{R}^d$, there exists $0 < \delta \leq 1$, $\sigma_\xi^2$ such that the compression operator $Q(\cdot; \xi_q)$ satisfies
 %    \htwai{or we split the $w_i$}		
 %  \begin{equation}
 %        \begin{aligned}
	% 	& \expec{ \| \mathbb{E}_{\xi_q}[ Q(\Prm; \xi_q) ] -\Prm\|^2 }  \le (1-\delta)^2  \| \Prm \|^2 , \\
 %        & \expec{ \| \mathbb{E}[ Q(\Prm; \xi_q) ] - Q(\Prm; \xi_q) \|^2 } \leq \sigma_\xi^2.
 %        \end{aligned}
	% 		\label{eq:Q-contraction}
	% 	\end{equation}
	% \end{assumption}

The above assumptions guarantee the convergence of {\algoname} algorithm towards a stationary point of \eqref{eq:main_problem}:
\begin{theorem} \label{theo:main_theorem}
    Under Assumptions \ref{assm:compress}, \ref{assm:lip}--\ref{assm:graph_var}, we set the step sizes and parameters as $\theta \ge \theta_{lb}, \alpha \le \alpha_{ub}, \gamma \le \update{\frac{1}{r}}$ where
    \begin{equation} \label{eq:stepsize-thm}
        \begin{aligned}
            &\eta = \frac{\update{r}\gamma\delta}{8\rhomax}, ~ \theta_{lb} = \frac{4}{\rrhomin} \max \left\{\frac{2L^2}{ n\a},\frac{2048\rhomax}{\update{r}\gamma\delta\rrhomin},L^2 \right\} ,\\
            &\alpha_{ub} = \frac{\update{r}\gamma\delta}{256\theta} \cdot \min \Bigg\{\frac{\rrhomin^2}{\rrhomax^2\rhomax},\frac{\rrhomin^2}{\sigma_{A}^2\rrhomax\rhomax},  \\
            &\qquad \qquad \qquad \qquad \frac{1}{72n\a \rhomax},\frac{\rrhomin}{2\rhomax \a} \Bigg\} .
        \end{aligned}
    \end{equation}
    % \begin{equation}
    %     \begin{aligned}
    %     &\theta_{lb} = \frac{16}{\rrhomin} \max \left\{\frac{L^2}{n \a},8\eta \frac{ \tilde{\rho}_1^2 }{ \rrhomin },\frac{3\delta_1}{2},3\delta_1 L^2,\delta_1\eta\tilde{\rho}_1,\frac{\delta_1 \tilde{\rho}_1^2}{2} \right\}, \\
    %     &\alpha_{ub} =\frac{\gamma\delta}{3840\theta}  \frac{ \rrhomin^2 }{\tilde{\rho}_1^2 }
    %     \min \left\{\frac{1}{\sigma_A^2}, \frac{1}{\tilde{\rho}_1^{2}},\frac{1}{\tilde{\rho}_1^{2}\eta}, \frac{\rrhomin\eta}{\tilde{\rho}_1}, \frac{1}{n\a}, \frac{\eta\tilde{\rho}_1}{n \a},1 \right\},
    %     \end{aligned}
    % \end{equation}
    % and $\delta_{1} = \frac{1024\rhomax}{\gamma\delta\rrhomin}$.
    % , $\delta_{2} = 8\tilde{\rho}_1 \rrhomin^{-1}$.
    Then, for any $T \geq 1$, it holds  
    \begin{align}
        & \frac{1}{T} \sum_{t=0}^{T-1}\expec{\left\| \nabla f(\avgprm^{t}) \right\|^2} \leq \frac{F_0 - f^\star}{\alpha T/16} \notag + 16\alpha\omega_{\sigma}\bar{\sigma}^2 \\
        &\qquad \qquad \qquad \qquad \qquad+ \frac{ 128\a \gamma^2\sigma_{\xi}^2\update{\rhomax} }{ \alpha \update{\rrhomin} },\\
        &\frac{1}{T} \sum_{t=0}^{T-1}\expec{ \| \Prm^t \|_{\wtk}^2} \le  \frac{4(F_0 - f^\star)}{\alpha\theta\rrhomin \a T} + \frac{4\alpha\omega_{\sigma}}{\theta\rrhomin \a} \bar{\sigma}^2 +\frac{32 \gamma^2\sigma_{\xi}^2}{\alpha\theta\rrhomin },
    \end{align}
    where
    $\omega_{\sigma}=\frac{L}{2n} + \a{\cal O}(\frac{n}{\update{r}\gamma\delta})$, 
    $F_0 = f( \avgprm^0 ) + \a {\cal O} ( \| \Prm^0 \|_{\wtk}^2 +  \frac{\alpha}{\eta}\| \widetilde{\lambda}^0 \|_{\wtk}^2 + {\textstyle \frac{\alpha}{\eta} } \|   \nabla {\bf f} ( \avgprm^0 ) \|_{ \wtk }^2 )$. 
    % The above statements hold for any $\a >0$, which is a free quantity to be determined.
    \update{The above statements hold for any $\a >0$, which is a free quantity that may be related to $T$.}
\end{theorem}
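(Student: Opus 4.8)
The plan is to analyze the {\algoname} recursion as a two-timescale stochastic approximation scheme by constructing a composite Lyapunov function that couples (i) the objective gap $f(\avgprm^t) - f^\star$, (ii) the consensus error $\|\Prm^t\|_{\wtk}^2$, (iii) a dual-tracking error measuring how well $\widetilde{\dprm}^t$ aligns with the ``correct'' dual variable (roughly $\|\widetilde{\dprm}^t + \nabla {\bf f}(\avgprm^t)\|_{\wtq}^2$ projected onto the consensus-orthogonal subspace), and (iv) the surrogate-tracking error $\|\Prm^t - \hat{\Prm}^t\|^2$. The first step is to establish a \emph{descent inequality on the average iterate}: applying $L$-smoothness of $f$ to $\avgprm^{t+1} = \avgprm^t - \frac{\alpha}{n}({\bf 1}^\top\otimes{\bf I}_d)\nabla{\bf f}(\Prm^t;\xi^{t+1})$ (the consensus/dual terms vanish under left-multiplication by ${\bf 1}^\top\otimes{\bf I}_d$ since $\wta^\top\wta$ has ${\bf 1}\otimes{\bf I}_d$ in its kernel), then using Assumption~\ref{assm:f_var} for the gradient noise and splitting $\nabla f_i(\prm_i^t) - \nabla f_i(\avgprm^t)$ via Assumption~\ref{assm:lip}; this produces the $(F_0-f^\star)/(\alpha T)$ and $\alpha\omega_\sigma\bar\sigma^2$ terms but leaves a $+\,{\cal O}(\alpha L^2)\|\Prm^t\|_{\wtk}^2$ remainder to be absorbed.

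Second, I would derive a \emph{consensus-error contraction}. Projecting the $\Prm$-update onto the range of $\wtk$ and using $\widetilde{\dprm}^t = \wta^\top\dprm^t$ together with the dual recursion, one gets a recursion in $\|\Prm^t\|_{\wtk}^2$ driven by the dual variable, the gradient, and — crucially — the error $\theta\wta^\top\wta(\xi_a^{t+1})(\hat{\Prm}^t - \Prm^t)$ coming from using the surrogate in place of the true variable. Assumption~\ref{assm:rand-graph} supplies the spectral bounds $\rrhomin\wtk \preceq \wta^\top\rxii\wta \preceq \rrhomax\wtk$ that turn the expected communication term into genuine contraction at rate $\sim\alpha\theta\rrhomin\a$, while Assumption~\ref{assm:graph_var} controls the variance of $\wta^\top\wta(\xi_a)$ around its mean. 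Third, and in parallel, I would write the \emph{surrogate-tracking recursion}: from \eqref{eq:xhat_update} and Assumption~\ref{assm:compress}, $\mathbb{E}\|\Prm^{t+1}-\hat{\Prm}^{t+1}\|^2 \le (1 - c_1 r\gamma\delta)\|\Prm^t-\hat{\Prm}^t\|^2 + c_2\|\Prm^{t+1}-\Prm^t\|^2 + \gamma^2\sigma_\xi^2$, after a Young-inequality split with a free constant, and then bound $\|\Prm^{t+1}-\Prm^t\|^2$ by $\alpha^2$ times the gradient, dual, and communication terms. This is where the noise floor $\gamma^2\sigma_\xi^2\rhomax/\rrhomin$ enters the final bound.

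The main obstacle — and the heart of the analysis — is the \emph{coupling and weight selection} that makes the four-component Lyapunov function actually decrease. Each recursion feeds cross-terms into the others: descent leaks $\|\Prm^t\|_{\wtk}^2$; the consensus recursion leaks the dual-tracking error and the surrogate error; the dual-tracking recursion (obtained by combining the $\widetilde{\dprm}$- and $\avgprm$-updates and using $\wtq\wta^\top\rxii\wta = \wtk$) leaks consensus and gradient-difference terms; and the surrogate recursion leaks $\alpha^2$-scaled versions of everything. One must choose Lyapunov weights (with $\theta\ge\theta_{lb}$ large enough that consensus contraction dominates the $L^2$ leakages, and $\alpha\le\alpha_{ub}$, $\eta = r\gamma\delta/(8\rhomax)$, $\gamma\le 1/r$ small enough that the ``slow'' UL updates perturb the ``fast'' LL surrogate contraction only at second order) so that all cross-terms are absorbed and only the advertised ${\cal O}(1/(\alpha T))$, ${\cal O}(\alpha\bar\sigma^2)$, and ${\cal O}(\gamma^2\sigma_\xi^2/\alpha)$ terms survive. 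I expect the argument to hinge on a careful accounting in the spirit of \cite{yau2024fully,koloskova2019decentralized}, but complicated by the simultaneous presence of random graphs, contractive-plus-noisy compression, and stochastic gradients. Once the telescoping inequality $\mathbb{E}[V^{t+1}] \le \mathbb{E}[V^t] - \frac{\alpha}{16}\mathbb{E}\|\nabla f(\avgprm^t)\|^2 - c\,\mathbb{E}\|\Prm^t\|_{\wtk}^2 + (\text{noise})$ is in hand, summing over $t=0,\dots,T-1$, dividing by $T$, and reading off the definitions of $F_0$ and $\omega_\sigma$ yields both displayed bounds; the free parameter $\a$ is carried symbolically throughout and tuned at the end (possibly as a function of $T$) to optimize the rate, giving the ${\cal O}(\bar\sigma/\sqrt{nT})$ noiseless and ${\cal O}(n^{-1/2}T^{-1/3})$ noisy rates quoted in Table~\ref{tab:compare}.
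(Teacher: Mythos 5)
Your plan follows the same overall route as the paper: a descent inequality for $f(\avgprm^t)$ (Lemma~\ref{lemma:descent}), a contraction recursion for the consensus error (Lemma~\ref{lemma:consensus}), a recursion for the dual-tracking error $\|{\bf v}^t\|_{\wtq+\c\wtk}^2$ with ${\bf v}^t=\alpha\widetilde{\dprm}^t+\alpha\nabla{\bf f}(\oneotimes\avgprm^t)$ (Lemma~\ref{lemma:dual_err}), a recursion for the surrogate error $\|\hat\Prm^t-\Prm^t\|^2$ (Lemma~\ref{lemma:xhat-err}), and a weighted combination that telescopes. However, your Lyapunov function contains only these four \emph{quadratic} components, and that is a genuine gap. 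The paper's potential \eqref{eq:Ft} carries a fifth, \emph{signed} cross term $\d\dotp{\Prm^t}{{\bf v}^t}_{\wtk}$ with its own one-step recursion (Lemma~\ref{lemma:xv_inner}), and this term is not optional: the dual recursion $\widetilde{\dprm}^{t+1}=\widetilde{\dprm}^t+\eta\wta^\top\wta(\xi_a^{t+1})\hat\Prm^t$ is \emph{expansive} (Lemma~\ref{lemma:dual_err} has the factor $(1+2\alpha)$ in front of $\expec{\|{\bf v}^t\|_{\wtq+\c\wtk}^2}$), so the only negative multiple of $\expec{\|{\bf v}^t\|_{\wtk}^2}$ available anywhere in the system is the $-\tfrac18\expec{\|{\bf v}^t\|_{\wtk}^2}$ generated by the inner-product recursion. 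In the bookkeeping of Lemma~\ref{lemma:para_choose}, the coefficient $\bar\omega_v=3\a+2\alpha(\rrhomin^{-1}+\c)\b-\d/8+4\e/(r\gamma\delta)$ is driven negative only by the $-\d/8$ contribution. If you instead try, as your sketch suggests, to ``absorb'' the cross terms $\dotp{\Prm^t}{{\bf v}^t}$ that appear in Lemmas~\ref{lemma:consensus}, \ref{lemma:dual_err} and \ref{lemma:xhat-err} via Young's inequality into the four quadratic components, you create additional positive $\|{\bf v}^t\|^2$ contributions with nothing to cancel them, and the potential does not decrease.

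A related point: in the paper the primal--dual cross terms are not absorbed at all but \emph{cancelled exactly} --- the weight $\c$ in the $\wtq+\c\wtk$ semi-norm is chosen in \eqref{eq:abcd_choice} precisely so that the aggregated coefficient matrix of $\dotp{\Prm^t}{{\bf v}^t}$ across all five recursions vanishes, which is why $\c$ appears as a tunable parameter in Lemma~\ref{lemma:dual_err} in the first place. The remainder of your sketch is consistent with the paper: the roles you assign to Assumptions~\ref{assm:rand-graph} and \ref{assm:graph_var}, the origin of the $\gamma^2\sigma_\xi^2$ noise floor in the surrogate recursion, the $\alpha^2$-scaling of the leakage from the fast to the slow timescale, the telescoping step \eqref{proof:thm_last2}, and the final tuning of $\a$ as a function of $T$ all match Appendices~\ref{app:lemma_descent_proof}--\ref{app:lemma_parachoose_proof}. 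The fix is simply to augment your Lyapunov function with the signed term $\d\dotp{\Prm^t}{{\bf v}^t}_{\wtk}$, prove its one-step recursion, and verify (as in \eqref{eq:potential_lb}) that the augmented potential remains bounded below by $f(\avgprm^t)$.
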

\noindent 
Note that the constraint on the compressed parameter update stepsize $\gamma$ is implicit. 
\update{The proof outline of Theorem \ref{theo:main_theorem} will be provided in Sec.~\ref{sec:pf_outline}, while the supporting lemmas will be proven in the appendix.}
% The complete proof and statement for the theorem can be found in the appendix. 

% \update{\begin{remark}
%     TiCoPD algorithm can be viewed as a two timescales stochastic algorithm. As long as the step sizes are non-summable, but square-summable, it should achieve asymptotic convergence \cite{nedic2009distributed, nedic2017achieving}.
% \end{remark}}

\begin{remark}
We recall from the derivation in Section~\ref{sec:2TS Compressed Stochastic PDA} that $\alpha$ is implicitly related to $\widetilde{\alpha}$. To see that {\algoname} is still a stochastic primal-dual algorithm for the augmented Lagrangian function with $\widetilde{\alpha} > 0$ under the step size choices in Theorem~\ref{theo:main_theorem}. We note that upon fixing $\theta$, there exists a stepsize $\alpha > 0$ satisfying the parameter choices in the theorem and $\alpha=\frac{1}{\frac{1}{\widetilde{\alpha}}+\theta(\m+1)}$, provided that $\widetilde{\alpha}>0$ is sufficiently small. 
    % There is a dependency between the choice of parameters and step size, and in fact, they are not contradictory. 
 %    Due to $\alpha=\frac{1}{\frac{1}{\widetilde{\alpha}}+\theta(\m+1)}$, once $\theta$ is fixed, the upper bound of $\alpha$ can be achieved by selecting a sufficiently small $\widetilde{\alpha}$.
 %    Therefore, the upper bound $\alpha\le \alpha_{ub}$ in the theorem can be satisfied.
	% \htwai{This is still confusing to read...} \haoming{How about this?}
\end{remark}

In the following, we discuss the consequences of Theorem \ref{theo:main_theorem} through specializing it to various cases and derive the specialized convergence rates of {\algoname}. We shall highlight how choosing the stepsizes $\alpha, \gamma$ on different timescales under \eqref{eq:stepsize-thm} lead to a convergent decentralized algorithm. Particularly, {\algoname} achieves state-of-the-art convergence rates in all settings with unreliable networks. The results are summarized in Table~\ref{tab:summary}.
Hereby, we concentrate on the setting with a fixed iteration number $T \gg 1$, and define the random variable \update{${\sf R}$} as uniformly and independently drawn from $\{ 0, 1, \ldots, T-1 \}$. 
\update{
Following \cite{ghadimi2013stochastic}, ${\sf R}$ can be seen as a random stopping criterion for {\algoname}. Below, we shall demonstrate that $\avgprm^{\sf R}$ is an efficient estimator of stationary solutions to \eqref{eq:origin_problem}.
% Notice that ${\sf R}$ can be seen as a random stopping criterion for {\algoname}, see \cite{ghadimi2013stochastic}. In the following, we shall observe that $\avgprm^{\sf R}$ is an efficient estimator of stationary solutions to \eqref{eq:origin_problem}.
}

\paragraph{Noiseless Communication ($\sigma_{\xi} = 0$)}
Consider the case when communication channel is noiseless but the communicated messages can be compressed ($\delta>0$). This is the most common setting considered in the literature, e.g., \cite{koloskova2019decentralizedb}.
Particularly, as the standalone $\hat{\Prm}$-update \eqref{eq:xhat_update} admits linear convergence with $\gamma = 1$, we anticipate that {\algoname} to take a constant $\gamma$ for optimal performance. 

When $\bar{\sigma} > 0$ such that the algorithm takes \emph{noisy gradients}, by selecting $\alpha= {\cal O}(\sqrt{n/ (\bar{\sigma}^2 T) }), \theta = {\cal O}(\sqrt{T}), \gamma = 1, \a = {\cal O}(1/{\sqrt{T}})$, it follows
\begin{align} \label{eq:converge-Rstep}
    % \frac{1}{T} \sum_{t=0}^{T-1}
    \expec{\left\| \nabla f(\avgprm^{\update{{\sf R}}}) \right\|^2} = {\cal O} \big( \sqrt{ \bar{\sigma}^2 / (nT) } \big), 
\end{align}
and  $
    % \frac{1}{T} \sum_{t=0}^{T-1}
    \expec{ \| \Prm^\update{{\sf R}} \|_{\wtk}^2} = {\cal O} \big( 1 / T \big)$ with $\a = 1$.
\update{Given any $\epsilon > 0$, we can deduce that by setting $T = \Omega( 1/ \epsilon^2 )$ and the appropriate $\alpha ,\theta ,\a ,\gamma$ that might be associated with $T$, the TiCoPD algorithm is guaranteed to generate an $\epsilon$-stationary solution in $T$ iterations (in expectation).}
Observe that with a diminishing $\alpha$ and constant $\gamma$, {\algoname} achieves the so-called linear speedup such that its convergence rates are asymptotically equivalent to that of centralized SGD with a minibatch size of $n$ on \eqref{eq:main_problem}. Moreover, they are comparable to that of decentralized algorithms such as DGD \cite{zeng2018nonconvex} and CHOCO-SGD \cite{koloskova2019decentralizedb} \emph{without} requiring additional conditions such as bounded gradient heterogeneity. 

On the other hand, when $\bar{\sigma} = 0$ such that the algorithm takes \emph{exact gradients}, it is possible to adopt a constant $\alpha$ as well. We have \update{$\theta_{lb} \asymp \delta^{-1}$}, $\alpha_{ub} \asymp \delta^2$, and thus
% There exists a sufficiently small constant step size $\alpha \le \alpha_{ub}$, such that for sufficiently large $T$, i
% and it follows 
\begin{align} \notag
    % \frac{1}{T} \sum_{t=0}^{T-1}
    \expec{\left\| \nabla f(\avgprm^\update{{\sf R}}) \right\|^2} = {\cal O} \left( {1} / ({ \delta^2 T}) \right), 
% \end{align}
% \begin{equation}
    % \frac{1}{T} \sum_{t=0}^{T-1}
    \expec{ \| \Prm^\update{{\sf R}} \|_{\wtk}^2} = {\cal O} \big( {1} / ({ \delta T}) \big).
\end{align}
Recall that $\delta \in (0,1]$ of Assumption~\ref{assm:compress} is affected by the quality of the compressor.
% , where $\delta \approx 0$ with an aggressive compression scheme, e.g., the number of quantization levels, $s$, is small. 
% We also comment on the impact of compression on the convergence rate and step size selection. Recall that $\delta \in (0,1]$ of Assumption~\ref{assm:compress} is affected by quality of the compressor, where $\delta \approx 0$ with an aggressive compression scheme, e.g., the number of quantization levels, $s$, is small. In this case, we have $\theta_{lb} \asymp \delta_1 \asymp \delta^{-1}$, and thus $\alpha_{ub} = {\cal O} (\delta^2)$. 
For example,
% the convergence rate will be $\frac{1}{T} \sum_{t=0}^{T-1}\expec{\left\| \nabla f (\avgprm^{t}) \right\|^2} = {\cal O} ( \delta^{-2} T^{-1} )$ and 
the upper bound on $\expec{\left\| \nabla f(\avgprm^\update{{\sf R}}) \right\|^2}$ evaluates to ${\cal O} ( d s^{-2} T^{-1} )$ for the case of randomized quantization.

\paragraph{Noisy Communication ($\sigma_{\xi} > 0$)} In this case, we expect the convergence rate to be slower as the algorithm needs to control the communication noise $\sigma_{\xi}$ by adopting a decreasing stepsize for $\gamma$ even in the standalone update \eqref{eq:xhat_update}. This noisy communication setting has only been considered in a small number of works, e.g., \cite{srivastava2011distributed, reisizadeh2023dimix, nassif2024differential}. 

With \emph{noisy gradients} ($\bar{\sigma}>0$), by choosing the step sizes and parameters as $\alpha={\cal O}(
% n^{\frac{1}{2}}\bar{\sigma}^{-1} 
T^{-\frac{2}{3}}), \theta ={\cal O}(T^{\frac{1}{3}}),\a={\cal O}(T^{-\frac{1}{3}}),\gamma={\cal O}(T^{-\frac{1}{3}})$, it holds
% \begin{align}
%     \frac{1}{T} \sum_{t=0}^{T-1}\expec{\left\| \nabla f(\avgprm^{t}) \right\|^2} = {\cal O} \big( \bar{\sigma}n ^{-\frac{1}{2}} T^{-\frac{1}{3}} \htwaiupdate{+ \sigma_{\xi}^2 \bar{\sigma} n ^{-\frac{1}{2}} T^{-\frac{1}{3}} } \big). \label{eq:noisy_rate}
% \end{align}
\begin{align} \notag
    % \frac{1}{T} \sum_{t=0}^{T-1}
    \expec{\left\| \nabla f(\avgprm^\update{{\sf R}}) \right\|^2} = 
    {\cal O} \left( \frac{1 + \sigma_{\xi}^2}{ T^{\frac{1}{3}}} \right),
    % {\cal O} \big( \frac{\bar{\sigma}}{n ^{\frac{1}{2}} T^{\frac{1}{3}}} \big)+ {\cal O}\big( \frac{\bar{\sigma}\sigma_{\xi}^2}{n ^{\frac{1}{2}} T^{\frac{1}{3}}}\big). \label{eq:noisy_rate}
% \end{align}
% \begin{equation}
    % \frac{1}{T} \sum_{t=0}^{T-1}
    \expec{ \| \Prm^\update{{\sf R}} \|_{\wtk}^2} = {\cal O} \left( \frac{1}{T^{\frac{1}{3}}} \right).
\end{align}
We notice that the convergence rates are only at ${\cal O}( 1/T^{1/3} )$ which are slower than the previous case of noiseless communication. 
% \htwai{What about the dependence of $n, \bar{\sigma}$, etc.?}
That said, when compared to DIMIX \cite{reisizadeh2023dimix} which achieves the rate of $ {\cal O} \big( T^{-\frac{1}{3}+\epsilon} \big)$, $\epsilon > 0$ for the averaged squared gradient norm, our convergence rate is slightly faster. In addition, the convergence of the DIMIX algorithm requires strong assumptions such as bounded heterogeneity and \emph{a-priori} bounded iterates when used with random-$k$ sparsification and random quantization compressors. The latter restrictions are not found in our results for {\algoname}. 

Lastly, we study if \emph{exact gradient} (i.e., $\bar{\sigma} = 0$) may lead to faster convergence. By choosing $\alpha={\cal O}( T^{-\frac{2}{3}} ), \theta ={\cal O}(T^{\frac{1}{3}}),\a={\cal O}(T^{-\frac{1}{3}}),\gamma={\cal O}(T^{-\frac{1}{3}})$, one only has
\begin{align} \notag
    % \frac{1}{T} \sum_{t=0}^{T-1}
    \expec{\left\| \nabla f(\avgprm^\update{{\sf R}}) \right\|^2} = {\cal O} 
    \left( \frac{1 + \sigma_{\xi}^2}{T^{\frac{1}{3}}}\right),
    % \big( \frac{1}{n ^{\frac{1}{2}} T^{\frac{1}{3}}} \big)+ {\cal O}\big( \frac{\sigma_{\xi}^2}{n ^{\frac{1}{2}} T^{\frac{1}{3}}}\big). 
% \end{align}
% \begin{equation}
    % \frac{1}{T} \sum_{t=0}^{T-1}
    \expec{ \| \Prm^\update{{\sf R}} \|_{\wtk}^2} = {\cal O} \left( \frac{1}{T^{\frac{1}{3}}} \right),
\end{align}
i.e., similar to the case with noisy gradient. 

% \update{The r.v. ${\sf R}$ can be seen as a random stopping criterion for TiCoPD. 
% Observe that the solution ${\bf x}^{\sf R}$, which has been found using less than $T$ iterations, will have the statistics guaranteed by Theorem \ref{theo:main_theorem} and satisfy \eqref{eq:converge-Rstep}. }

% \begin{table}[h]   
% \begin{center}   
% \label{table:1} 
% \begin{tabular}{|c|c|c|}   
% \hline   $\frac{1}{T} \sum_{t=0}^{T-1}\expec{\left\| \nabla f(\avgprm^{t}) \right\|^2}$ & $\sigma_{\xi}=0$ & $\sigma_{\xi}>0$ \\   
% \hline   $\bar{\sigma} = 0$ & ${\cal O} \left( \frac{1}{ \delta^2 T} \right)$ & ${\cal O} \big( \frac{1}{n ^{\frac{1}{2}} T^{\frac{1}{3}}} \big)+ {\cal O}\big( \frac{\sigma_{\xi}^2}{n ^{\frac{1}{2}} T^{\frac{1}{3}}}\big)$  \\ 
% \hline   $\bar{\sigma} > 0$ & ${\cal O} \big( \frac{\bar{\sigma}}{n ^{\frac{1}{2}} T^{\frac{1}{2}}} \big)$ & ${\cal O} \big( \frac{\bar{\sigma}}{n ^{\frac{1}{2}} T^{\frac{1}{3}}} \big)+ {\cal O}\big( \frac{\bar{\sigma}\sigma_{\xi}^2}{n ^{\frac{1}{2}} T^{\frac{1}{3}}}\big)$  \\  
% \hline   
% \end{tabular}   
% \vspace{1em}
% \caption{Convergence rates of TiCoPD}  
% \end{center}   
% \end{table}

\begin{table}[t]   
\begin{center}   
\resizebox{1\linewidth}{!}{
\begin{tabular}{c c l l}   
\toprule
\multicolumn{2}{c}{\bfseries Noise present?} & \multicolumn{2}{c}{\bfseries Convergence Rates of {\algoname}} \\
\bfseries Grad.~($\bar{\sigma}$) & \bfseries Comm.~($\sigma_{\xi}$) & \bfseries Grad.~$\mathbb{E}[ \| \nabla f( \avgprm^\update{{\sf R}} ) \|^2 ]$ & \bfseries Cons.~$\mathbb{E}[ \| \Prm^\update{{\sf R}} \|_{\wtk}^2 ]$ \\   
\midrule 
\cross & \cross & ${\cal O} \left( {1} / ({ \delta^2 T}) \right)$ & ${\cal O} ( {1} / ({\delta T}) )$  \\ 
\hline   
\checkmark & \cross & ${\cal O} \big( {\bar{\sigma}} / ( {n ^{\frac{1}{2}} T^{\frac{1}{2}}} ) \big)$ & ${\cal O} \big( {1} / {T} \big)$ \\
\hline  
\cross & \checkmark & ${\cal O} \big( ({1 + \sigma_{\xi}^2}) / { T^{\frac{1}{3}}}  \big)$ & ${\cal O} \big( 1 / T^{\frac{1}{3}} \big)$  \\ 
\hline  
\checkmark & \checkmark & ${\cal O} \big( { (1 + \sigma_{\xi}^2 ) } / {T^{\frac{1}{3}}}  \big)$ &  ${\cal O} \big( 1 / T^{\frac{1}{3}} \big)$ \\ 
\bottomrule
\end{tabular}}   
\vspace{.2em}
\caption{Convergence rates of {\algoname} on unreliable networks under different scenarios.}\vspace{-.4cm} \label{tab:summary}
\end{center}   
\end{table}

\subsection{Proof outline of Theorem \ref{theo:main_theorem}}\label{sec:pf_outline}

% \htwai{stopped here}

    % Considering that the original problem \eqref{eq:origin_problem} is consensus optimization, we need to focus on the average variable $\avgprm$ and consensus error $\|\Prm\|_{\wtk}^2$. 
Our plan is to analyze     
    % Naturally, we analyze 
the stable point of TiCoPD through studying the progress of $f(\avgprm^{t})$ and to control the gradient error by providing an upper bound of $\| \nabla f (\avgprm^t) \|^2$. 
Unlike previous works such as \cite{yau2024fully}, our analysis needs to deal with the surrogate variable $\hat{\Prm}$ and handle the noise effects in compressed communication.
For notational convenience, we denote $\mathbf{v}^t = \alpha \widetilde{\lambda}^{t} + \alpha \nabla {\bf f}(({\bf 1}_n \otimes {\bf I}_d ) \avgprm^{t})$ as a measure for the tracking performance of the average exact gradient.
    % and the potential function is designed based on its iterative. Finally, we obtained the upper bounds of gradient and consensus error by analyzing the updates of the potential function.
    
% \htwai{discuss what're the differences vs FSPDA -- 1. we have to deal with $\hat{x}$, 2. communication/compression noise $\sigma_{\xi}$ requires fine tuning with $\gamma$.}
The first step is to establish the following descent lemma:
\begin{lemma} \label{lemma:descent}
Under Assumption \ref{assm:lip} and \ref{assm:f_var}, when $\alpha \leq \frac{1}{4L}$,
{\begin{align} 
	&\mathbb{E}\left[ f (\avgprm^{t+1}) \right] \leq  
	\expec{ f (\avgprm^{t})} - \frac{\alpha}{4} \expec{\left\| \nabla f (\avgprm^t) \right\|^2}  \notag \\
	& \quad + \frac{\alpha L^2}{n} \expec{\| \Prm^t \|^2_{\wtk}} + \frac{\alpha^2 L}{2n^2}\sum_{i=1}^n \sigma_i^2 . 
	\label{eq:descent_lemma}
\end{align}}
\end{lemma}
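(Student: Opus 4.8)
\textbf{Proof plan for Lemma~\ref{lemma:descent}.}

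The plan is to start from the $L$-smoothness of $f$ (Assumption~\ref{assm:lip}, which carries over to $f = \frac1n\sum_i f_i$) and expand $f(\avgprm^{t+1})$ around $\avgprm^t$. First I would derive the recursion obeyed by the average iterate $\avgprm^t$: left-multiplying the $\Prm$-update in \eqref{eq:pdc-recur-all} by $\frac1n({\bf 1}^\top\otimes{\bf I}_d)$ and using that ${\bf 1}^\top\otimes{\bf I}_d$ annihilates both $\wta^\top\wta(\xi_a^{t+1})\hat{\Prm}^t$ (since ${\bf 1}$ is in the kernel of the incidence matrix $\bA$, hence $({\bf 1}^\top\otimes{\bf I}_d)\wta^\top = {\bf 0}$) and the dual term $\widetilde{\dprm}^t$ (same reason, as $\widetilde{\dprm}^t = \wta^\top\dprm^t$ stays in the range of $\wta^\top$ throughout the recursion given the initialization), I get the clean update $\avgprm^{t+1} = \avgprm^t - \frac{\alpha}{n}\sum_{i=1}^n \nabla f_i(\prm_i^t;\xi_i^{t+1})$. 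So the average behaves like an inexact/perturbed SGD step with gradient $\frac1n\sum_i \nabla f_i(\prm_i^t;\xi_i^{t+1})$ rather than $\nabla f(\avgprm^t)$.

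Next I would plug this into the descent inequality $f(\avgprm^{t+1}) \le f(\avgprm^t) + \langle \nabla f(\avgprm^t), \avgprm^{t+1}-\avgprm^t\rangle + \frac{L}{2}\|\avgprm^{t+1}-\avgprm^t\|^2$, then take conditional expectation over $\xi^{t+1}$. The cross term gives $-\alpha \langle \nabla f(\avgprm^t), \frac1n\sum_i \nabla f_i(\prm_i^t)\rangle$; I would rewrite $\frac1n\sum_i \nabla f_i(\prm_i^t) = \nabla f(\avgprm^t) + \big(\frac1n\sum_i \nabla f_i(\prm_i^t) - \nabla f(\avgprm^t)\big)$ and bound the discrepancy term via $L$-smoothness of each $f_i$: $\|\frac1n\sum_i(\nabla f_i(\prm_i^t)-\nabla f_i(\avgprm^t))\|^2 \le \frac{L^2}{n}\sum_i\|\prm_i^t-\avgprm^t\|^2 = \frac{L^2}{n}\|\Prm^t\|_{\wtk}^2$, using Jensen/Cauchy-Schwarz. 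Applying Young's inequality with a carefully chosen constant (so the $-\alpha\|\nabla f(\avgprm^t)\|^2$ coefficient degrades from $-\alpha$ to $-\alpha/4$, leaving room to absorb the quadratic term) produces the $-\frac{\alpha}{4}\|\nabla f(\avgprm^t)\|^2 + \frac{\alpha L^2}{n}\|\Prm^t\|_{\wtk}^2$ contribution. For the $\frac{L}{2}\|\avgprm^{t+1}-\avgprm^t\|^2$ term, I would use $\mathbb{E}\|\frac1n\sum_i \nabla f_i(\prm_i^t;\xi_i^{t+1})\|^2 = \|\frac1n\sum_i\nabla f_i(\prm_i^t)\|^2 + \mathbb{E}\|\frac1n\sum_i(\nabla f_i(\prm_i^t;\xi_i^{t+1})-\nabla f_i(\prm_i^t))\|^2$, where the variance term is $\le \frac{1}{n^2}\sum_i\sigma_i^2$ by independence of the $\xi_i$ (Assumption~\ref{assm:f_var}), yielding the $\frac{\alpha^2 L}{2n^2}\sum_i\sigma_i^2$ term; the remaining $\frac{L\alpha^2}{2}\|\frac1n\sum_i\nabla f_i(\prm_i^t)\|^2$ piece is handled by splitting again into $\nabla f(\avgprm^t)$ plus discrepancy and using $\alpha \le \frac{1}{4L}$ so that $\frac{L\alpha^2}{2} \le \frac{\alpha}{8}$, which is small enough to be swallowed by the negative term and the consensus-error term with adjusted constants.

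The main obstacle is the bookkeeping of constants in the two applications of Young's inequality: one must split off $\nabla f(\avgprm^t)$ from both the first-order cross term and the second-order term in a coordinated way, so that the aggregate coefficient on $\|\nabla f(\avgprm^t)\|^2$ lands at exactly $-\alpha/4$, the coefficient on $\|\Prm^t\|_{\wtk}^2$ collapses to $\frac{\alpha L^2}{n}$ (not something larger), and the only condition needed is $\alpha \le \frac{1}{4L}$. Getting all three to hold simultaneously requires choosing the Young's parameters as functions of $L$ and checking the arithmetic; everything else (the kernel argument for $\avgprm$, the smoothness bounds, the variance bound) is routine. Finally, taking total expectation over the filtration gives \eqref{eq:descent_lemma}.
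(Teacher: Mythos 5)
Your plan is correct and follows essentially the same route as the paper's proof: derive $\avgprm^{t+1}=\avgprm^t-\frac{\alpha}{n}({\bf 1}^\top\otimes{\bf I}_d)\nabla{\bf f}(\Prm^t;\xi^{t+1})$ from the kernel property of the incidence matrix, apply the $L$-smoothness descent inequality, split the cross term via Young's inequality against the consensus discrepancy, and use the bias--variance decomposition with independence of the $\xi_i$ plus $\alpha\le\frac{1}{4L}$ to collect the constants into $-\frac{\alpha}{4}$, $\frac{\alpha L^2}{n}$, and $\frac{\alpha^2 L}{2n^2}\sum_i\sigma_i^2$. The constant bookkeeping you flag works out exactly as in the paper ($\alpha^2 L\le\frac{\alpha}{4}$ on the gradient term and $\frac{\alpha L^2}{2n}+\frac{\alpha^2 L^3}{n}\le\frac{\alpha L^2}{n}$ on the consensus term).
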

\noindent  See Appendix \ref{app:lemma_descent_proof} for the proof. Observe that the descent of $\mathbb{E}\left[ f (\avgprm^{t+1}) \right]$ depends on the consensus error $\expec{\|\Prm\|_{\wtk}^2}$ which can be further bounded as:
	% Therefore, next we need to develop the bound of consensus error  and surrogate variable error $\expec{\|\hat{\Prm}^t-\Prm^t\|^2}$.
	% The proof framework of the algorithm can be referenced \cite{}A fully stochastic primal-dual algorithm. Notice that, a difference here is that there is a $\|\Prm^t-\hat{\Prm}^t\|^2$ term in \eqref{lemma:descent}. Therefore, we need to provide a bound on this term and consider it in the potential function. It holds the Lemma \ref{lemma:xhat-err}. 

\begin{lemma} \label{lemma:consensus} 
Under Assumptions \ref{assm:lip}--\ref{assm:graph_var} and the step size conditions $\alpha \le \frac{\rrhomin}{16 \rrhomax \theta } \min\{\frac{1}{ \rrhomax}, \frac{1}{\sigma_A^2}\}$, $\theta \ge \frac{12 L}{\rrhomin}$, then
\begin{equation}
    \begin{aligned}
	&\expec{ \| \Prm^{t+1} \|_{\wtk}^2} \leq \left( 1- (3/2) \alpha\theta \rrhomin \right) \expec{\|\Prm^t \|^2_{ \wtk }} \\
	& \quad + 3 \expec{ \|{\bf v}^t\|_{\wtk}^2 } - 2\expec{ \dotp{ \Prm^t}{ {\bf v}^t }_{ \left({\bf I} - \alpha\theta \wta^\top \rxii \wta \right)\wtk } } \\
    & \quad + \frac{9 \alpha \theta \rhomax^2}{ \rrhomin } \expec{ \| \hat{\Prm}^t-\Prm^t \|_{\wtk}^2} + \alpha^2\sum_{i=1}^n \sigma_i^2 .
	\end{aligned}
\end{equation}
\end{lemma}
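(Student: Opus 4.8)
\textbf{Proof proposal for Lemma~\ref{lemma:consensus}.}
The plan is to start from the $\Prm$-update in \eqref{eq:pdc-recur-all}, project onto the consensus subspace by left-multiplying with $\wtk$, and expand the resulting squared $\wtk$-norm. Since $\wtk$ annihilates the all-ones direction, the only terms that survive are $\wtk \Prm^t$, the gradient/dual combination hidden in $\mathbf{v}^t$, and the coupling term $\theta \wtk \wta^\top \wta(\xi_a^{t+1}) \hat{\Prm}^t$. First I would write $\Prm^{t+1} = \Prm^t - \alpha(\nabla{\bf f}(\Prm^t;\xi^{t+1}) + \widetilde{\dprm}^t + \theta \wta^\top\wta(\xi_a^{t+1})\hat{\Prm}^t)$ and substitute $\hat{\Prm}^t = \Prm^t + (\hat{\Prm}^t - \Prm^t)$ so that the dominant contracting term is $(\bI - \alpha\theta\wta^\top\wta(\xi_a^{t+1}))\wtk\Prm^t$, the tracking correction brings in $\mathbf{v}^t$ (recalling $\mathbf{v}^t = \alpha\widetilde{\dprm}^t + \alpha\nabla{\bf f}((\mathbf{1}_n\otimes\bI_d)\avgprm^t)$ and that $\nabla{\bf f}(\Prm^t;\xi^{t+1})$ differs from $\nabla{\bf f}((\mathbf{1}_n\otimes\bI_d)\avgprm^t)$ by an $L$-Lipschitz consensus-error term plus zero-mean gradient noise), and the surrogate mismatch $\hat{\Prm}^t - \Prm^t$ enters linearly through $\theta\wta^\top\wta(\xi_a^{t+1})(\hat{\Prm}^t - \Prm^t)$.

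Next I would take the conditional expectation over $\xi^{t+1}$. Using $\expec{\wta^\top\wta(\xi_a^{t+1})} = \wta^\top\rxii\wta$ and Assumption~\ref{assm:rand-graph} ($\rrhomin\wtk \preceq \wta^\top\rxii\wta \preceq \rrhomax\wtk$, $\wta^\top\wta \preceq \rhomax\wtk$), the leading quadratic form $\|(\bI - \alpha\theta\wta^\top\wta(\xi_a^{t+1}))\wtk\Prm^t\|^2$ contracts: in expectation its deterministic part is at most $(1 - 2\alpha\theta\rrhomin + \alpha^2\theta^2\|\wta^\top\wta\|^2)\|\Prm^t\|_{\wtk}^2$, and the variance of $\wta^\top\wta(\xi_a)\Prm^t$ around its mean is controlled by Assumption~\ref{assm:graph_var} with constant $\sigma_A^2$, contributing a further $\alpha^2\theta^2\sigma_A^2\|\wta\Prm^t\|_{\bf R}^2 \le \alpha^2\theta^2\sigma_A^2\rrhomax\|\Prm^t\|_{\wtk}^2$ term. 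Imposing $\alpha \le \frac{\rrhomin}{16\rrhomax\theta}\min\{1/\rrhomax, 1/\sigma_A^2\}$ forces the two $\alpha^2\theta^2$ corrections to be dominated by $\tfrac14\alpha\theta\rrhomin$ each, leaving a net contraction factor no worse than $1 - \tfrac32\alpha\theta\rrhomin$ after the cross terms are absorbed. The cross term between the contracting part and $\mathbf{v}^t$ is exactly $-2\dotp{\Prm^t}{\mathbf{v}^t}_{(\bI - \alpha\theta\wta^\top\rxii\wta)\wtk}$ (kept unexpanded, as in the statement), while the cross term with $\hat{\Prm}^t - \Prm^t$ is handled by Young's inequality, splitting into a piece absorbed into the contraction and a piece $\frac{9\alpha\theta\rhomax^2}{\rrhomin}\|\hat{\Prm}^t-\Prm^t\|_{\wtk}^2$ — the factor $\rhomax^2/\rrhomin$ coming from bounding $\|\theta\wta^\top\wta(\xi_a)(\hat{\Prm}^t-\Prm^t)\|^2 \le \theta^2\rhomax^2\|\hat{\Prm}^t-\Prm^t\|_{\wtk}^2$ against a $\tfrac14\alpha\theta\rrhomin\|\cdot\|$ budget. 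The gradient-noise term $\nabla{\bf f}(\Prm^t;\xi^{t+1}) - \nabla{\bf f}(\Prm^t)$ is zero-mean and independent of the graph sample, so its contribution collapses to the additive $\alpha^2\sum_{i=1}^n\sigma_i^2$ by Assumption~\ref{assm:f_var}; the Lipschitz gap between $\nabla{\bf f}(\Prm^t)$ and $\nabla{\bf f}((\mathbf{1}_n\otimes\bI_d)\avgprm^t)$ is $O(L\|\Prm^t\|_{\wtk})$ and is absorbed using $\theta \ge 12L/\rrhomin$ into the $3\|\mathbf{v}^t\|_{\wtk}^2$ and contraction budgets, while the deterministic mean gradient $\nabla{\bf f}(\Prm^t)$ itself is folded into $\mathbf{v}^t$ by definition. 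Collecting everything yields the claimed inequality with coefficients $1 - \tfrac32\alpha\theta\rrhomin$, $3$ on $\|\mathbf{v}^t\|_{\wtk}^2$, $\tfrac{9\alpha\theta\rhomax^2}{\rrhomin}$ on the surrogate error, and $\alpha^2\sum\sigma_i^2$.

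The main obstacle I anticipate is the careful bookkeeping needed to keep the cross term $-2\dotp{\Prm^t}{\mathbf{v}^t}_{(\bI - \alpha\theta\wta^\top\rxii\wta)\wtk}$ in the exact form stated (rather than bounding it) while simultaneously ensuring every other cross term and every $\alpha^2$-order term is provably absorbed into the $-\tfrac32\alpha\theta\rrhomin$ contraction margin. Getting this right requires that the step-size and $\theta$ conditions be tight enough: one must verify that $1 - 2\alpha\theta\rrhomin$ plus all the $O(\alpha^2\theta^2)$ and $O(\alpha\theta L/\rrhomin)$ slack terms together still sit below $1 - \tfrac32\alpha\theta\rrhomin$, which is precisely what the stated bounds $\alpha \le \frac{\rrhomin}{16\rrhomax\theta}\min\{1/\rrhomax,1/\sigma_A^2\}$ and $\theta \ge 12L/\rrhomin$ are engineered to deliver. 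A secondary subtlety is that $\mathbf{v}^t$ mixes the dual variable with the centered gradient, so after substituting $\widetilde{\dprm}^t = \alpha^{-1}\mathbf{v}^t - \nabla{\bf f}((\mathbf{1}_n\otimes\bI_d)\avgprm^t)$ into the update one must track how the $\nabla{\bf f}((\mathbf{1}_n\otimes\bI_d)\avgprm^t)$ piece recombines with $\nabla{\bf f}(\Prm^t;\xi^{t+1})$ — it is the difference of these two (consensus error plus noise), not either one alone, that needs to be bounded, and conflating them would produce a spurious non-vanishing term.
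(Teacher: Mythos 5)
Your proposal is correct and follows essentially the same route as the paper's proof: decompose the update into the contracting term $({\bf I}-\alpha\theta\wta^\top\rxii\wta)\Prm^t$, the term $-{\bf v}^t$, a zero-mean stochastic error (gradient noise plus graph-sampling deviation), the heterogeneity gap $\nabla{\bf f}(\oneotimes\avgprm^t)-\nabla{\bf f}(\Prm^t)$, and the surrogate mismatch, then expand the $\wtk$-norm, keep the $\Prm^t$--${\bf v}^t$ cross term exact, absorb the rest via Young's inequality and the stated step-size conditions. The only cosmetic difference is that you keep the random matrix $\wta^\top\wta(\xi_a^{t+1})$ in the leading quadratic form and split bias/variance afterwards, whereas the paper centers on the expected matrix from the outset; the resulting bounds are identical.
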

\noindent See Appendix \ref{app:lemma_consensus_proof} for the proof.
The above lemma reveals that the consensus error $\expec{\|\Prm^t\|^2_{\wtk}}$ depends on $\expec{\|{\bf v}^t\|^2_{\wtk}}$, the weighted inner product between $\Prm^t,{\bf v}^t$, and the tracking error $\expec{ \| \hat{\Prm}^t-\Prm^t \|^2}$. The latter terms admit the following bounds:
    \begin{lemma} \label{lemma:dual_err}
        Under Assumption \ref{assm:lip}--\ref{assm:rand-graph} and the step size condition $\alpha \leq 1/4$,
        for any constant $\c > 0$, it follows that
        % \begin{equation}
            \begin{align*}
                &\expec{ \left\| {\bf v}^{t+1} \right\|_{\wtq + \c {\wtk}}^2} \leq (1+2\alpha)\expec{\left\| {\bf v}^t \right\|_{\wtq + \c \wtk}^2}  \\
                &\quad + (4\alpha^2\eta^2\rhomax^2 + 6\alpha^3 L^4)(\rrhomin^{-1}+\c) \expec{ \| \Prm^t \|_{\wtk}^2 } \\
                % &+ {\color{red} \frac{1}{10}\alpha^2\eta\theta(\rrhomin^{-1}+\c)\rrhomin^2 } \expec{ \| \Prm^t \|_{\wtk}^2 } \\
                &\quad + 2\alpha\eta \expec{\dotp{{\bf v}^t}{ \Prm^t}_{\wtk+\c \wta^\top \rxii  \wta}} \\
                &\quad +   2\alpha\eta^2 \rhomax^2 (\rrhomin^{-1} + \c) \expec{\left\| \hat{\Prm}^t -\Prm^t \right\|_{\wtk}^2}    \\
                &\quad + 6\alpha^3 n L^2(\rrhomin^{-1} + \c) \left\{  \frac{1}{n^2}  \sum_{i=1}^n  \sigma_i^2  + \expec{\left\| \nabla f(\avgprm^{t}) \right\|^2}  \right\}.
            \end{align*}
        % \end{equation} 
    \end{lemma}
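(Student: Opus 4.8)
The plan is to expand the $(\wtq+\c\wtk)$-weighted squared norm of ${\bf v}^{t+1}$ around ${\bf v}^t$ and bound each resulting piece after conditioning on the natural filtration $\mathcal{F}_t$. First I would compute the one-step increment of ${\bf v}^t=\alpha\widetilde{\lambda}^t+\alpha\nabla{\bf f}(({\bf 1}_n\otimes{\bf I}_d)\avgprm^t)$. Using the dual recursion $\widetilde{\lambda}^{t+1}=\widetilde{\lambda}^t+\eta\,\wta^\top\wta(\xi_a^{t+1})\hat{\Prm}^t$ from \eqref{eq:pdc-recur-all} together with the identity $({\bf 1}_n^\top\otimes{\bf I}_d)\wta^\top={\bf 0}$ (which also yields $\avgprm^{t+1}-\avgprm^t=-\tfrac{\alpha}{n}\sum_{i=1}^n\nabla f_i(\prm_i^t;\xi_i^{t+1})$, since both the dual and the $\theta\wta^\top\wta(\xi_a^{t+1})\hat{\Prm}^t$ terms vanish under $({\bf 1}_n^\top\otimes{\bf I}_d)$), one obtains
\[
{\bf v}^{t+1}={\bf v}^t+\alpha\eta\,\wta^\top\wta(\xi_a^{t+1})\hat{\Prm}^t+\alpha\,\Delta{\bf g}^t,\quad \Delta{\bf g}^t:=\nabla{\bf f}\big(({\bf 1}_n\otimes{\bf I}_d)\avgprm^{t+1}\big)-\nabla{\bf f}\big(({\bf 1}_n\otimes{\bf I}_d)\avgprm^{t}\big).
\]
Expanding $\|{\bf v}^{t+1}\|_{\wtq+\c\wtk}^2$ then leaves $\|{\bf v}^t\|_{\wtq+\c\wtk}^2$ untouched and produces two cross terms and one quadratic remainder, treated in turn below.

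For the cross term with $\alpha\eta\,\wta^\top\wta(\xi_a^{t+1})\hat{\Prm}^t$ I would write $\hat{\Prm}^t=\Prm^t+(\hat{\Prm}^t-\Prm^t)$. The $\Prm^t$-part is linear in $\wta^\top\wta(\xi_a^{t+1})$ with $\Prm^t,{\bf v}^t$ being $\mathcal{F}_t$-measurable, so taking $\mathbb{E}[\,\cdot\mid\mathcal{F}_t]$ replaces $\wta^\top\wta(\xi_a^{t+1})$ by $\wta^\top\rxii\wta$, and the pseudoinverse identities $\wtq\wta^\top\rxii\wta=\wtk$, $\wtk\wta^\top\rxii\wta=\wta^\top\rxii\wta$ from the start of Section~\ref{sec:Convergence analysis} collapse it exactly to the inner-product term $2\alpha\eta\,\expec{\dotp{{\bf v}^t}{\Prm^t}_{\wtk+\c\wta^\top\rxii\wta}}$ appearing in the statement. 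The $(\hat{\Prm}^t-\Prm^t)$-part is handled by Young's inequality $2\dotp{{\bf a}}{{\bf b}}_{\bf M}\le\varepsilon\|{\bf a}\|_{\bf M}^2+\varepsilon^{-1}\|{\bf b}\|_{\bf M}^2$ with $\varepsilon=\alpha$, after which the spectral bounds $\wta^\top\wta(\xi_a)\preceq\wta^\top\wta\preceq\rhomax\wtk$ (Assumption~\ref{assm:rand-graph}) and $\wtq\preceq\rrhomin^{-1}\wtk$ give $\alpha\|{\bf v}^t\|_{\wtq+\c\wtk}^2+\alpha\eta^2\rhomax^2(\rrhomin^{-1}+\c)\|\hat{\Prm}^t-\Prm^t\|_{\wtk}^2$. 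For the cross term with $\alpha\,\Delta{\bf g}^t$ I would use Young again with $\varepsilon=\alpha$, giving $\alpha\|{\bf v}^t\|_{\wtq+\c\wtk}^2+\alpha(\rrhomin^{-1}+\c)\|\Delta{\bf g}^t\|^2$. The two $\alpha\|{\bf v}^t\|_{\wtq+\c\wtk}^2$ pieces combine with the untouched term into the $(1+2\alpha)$-coefficient.

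For the quadratic remainder I would use $\|{\bf a}+{\bf b}\|_{\bf M}^2\le2\|{\bf a}\|_{\bf M}^2+2\|{\bf b}\|_{\bf M}^2$: via the same spectral bounds and $\|\hat{\Prm}^t\|_{\wtk}^2\le2\|\Prm^t\|_{\wtk}^2+2\|\hat{\Prm}^t-\Prm^t\|_{\wtk}^2$ the first part costs at most $4\alpha^2\eta^2\rhomax^2(\rrhomin^{-1}+\c)\big(\|\Prm^t\|_{\wtk}^2+\|\hat{\Prm}^t-\Prm^t\|_{\wtk}^2\big)$, and the second costs $2\alpha^2(\rrhomin^{-1}+\c)\|\Delta{\bf g}^t\|^2$. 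It remains to bound $\expec{\|\Delta{\bf g}^t\|^2\mid\mathcal{F}_t}$: $L$-smoothness gives $\|\Delta{\bf g}^t\|^2\le nL^2\|\avgprm^{t+1}-\avgprm^t\|^2$, and substituting $\avgprm^{t+1}-\avgprm^t=-\tfrac{\alpha}{n}\sum_i\nabla f_i(\prm_i^t;\xi_i^{t+1})$, splitting the stochastic average into its zero-mean noise part (conditional second moment $\tfrac{1}{n^2}\sum_i\sigma_i^2$ by Assumption~\ref{assm:f_var} and independence), the consensus-bias part $\tfrac1n\sum_i(\nabla f_i(\prm_i^t)-\nabla f_i(\avgprm^t))$ (squared norm $\le\tfrac{L^2}{n}\|\Prm^t\|_{\wtk}^2$), and $\nabla f(\avgprm^t)$, with $\|{\bf a}+{\bf b}+{\bf c}\|^2\le3(\|{\bf a}\|^2+\|{\bf b}\|^2+\|{\bf c}\|^2)$, yields $\expec{\|\avgprm^{t+1}-\avgprm^t\|^2\mid\mathcal{F}_t}\le3\alpha^2\big(\tfrac{1}{n^2}\sum_i\sigma_i^2+\tfrac{L^2}{n}\|\Prm^t\|_{\wtk}^2+\|\nabla f(\avgprm^t)\|^2\big)$. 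Merging the two $\|\Delta{\bf g}^t\|^2$ contributions and the two $\|\hat{\Prm}^t-\Prm^t\|_{\wtk}^2$ contributions using $2\alpha^2\le\alpha$ and $4\alpha^2\le\alpha$ (both valid since $\alpha\le1/4$) and collecting reproduces the stated bound: $2\alpha\eta^2\rhomax^2(\rrhomin^{-1}+\c)$ on $\|\hat{\Prm}^t-\Prm^t\|_{\wtk}^2$, $(4\alpha^2\eta^2\rhomax^2+6\alpha^3L^4)(\rrhomin^{-1}+\c)$ on $\|\Prm^t\|_{\wtk}^2$, and $6\alpha^3 nL^2(\rrhomin^{-1}+\c)$ on $\{\tfrac1{n^2}\sum_i\sigma_i^2+\expec{\|\nabla f(\avgprm^t)\|^2}\}$.

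I expect the main obstacle to be the bookkeeping around $\Delta{\bf g}^t$: it is not $\mathcal{F}_t$-measurable and, unlike the gradient-noise terms, has no vanishing conditional mean, so it must be carried through the estimate via $\expec{\|\Delta{\bf g}^t\|^2\mid\mathcal{F}_t}$, and its reduction to $\|\Prm^t\|_{\wtk}^2$, $\|\nabla f(\avgprm^t)\|^2$, and the gradient variance must be performed with the exact constants so that the choice $\varepsilon=\alpha$ and the restriction $\alpha\le1/4$ make every $\alpha^2$ and $\alpha^3$ cross contribution collapse into the advertised coefficients. A secondary point is keeping all vectors in the right subspace so that $\wtq\wta^\top\rxii\wta=\wtk$ applies when simplifying the inner-product term, and so that $\wtq\preceq\rrhomin^{-1}\wtk$ and $\wta^\top\wta(\xi_a)\preceq\rhomax\wtk$ can convert the $\wta^\top\wta$-weighted quantities into $\wtk$-weighted ones.
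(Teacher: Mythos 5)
Your proposal is correct and follows essentially the same route as the paper: the same decomposition of ${\bf v}^{t+1}$ via the dual recursion, the same conditional-expectation treatment of the $\Prm^t$ cross term using $\wtq\wta^\top\rxii\wta=\wtk$, the same Young-inequality splits with $\varepsilon=\alpha$, and the same merging of $\alpha^2$ terms under $\alpha\le 1/4$. The only cosmetic difference is that you re-derive inline the bound on $\|\nabla{\bf f}(\oneotimes\avgprm^{t+1})-\nabla{\bf f}(\oneotimes\avgprm^{t})\|^2$, which the paper delegates to the auxiliary Lemma \ref{lemma:update_err}.
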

    \noindent See Appendix \ref{app:lemma_dualerr_proof} for the proof.

    \begin{lemma} \label{lemma:xv_inner} 
        Under Assumption \ref{assm:lip}--\ref{assm:rand-graph}, and the step size conditions
        $\alpha\le\min\{\frac{1}{12},\frac{\rrhomin}{2\rrhomax^2 \theta},\frac{\rrhomin}{2\sigma_{A}^2\rrhomax \theta}\}$, $\theta \ge \max\{\frac{4L^2}{\rrhomin},\frac{2\eta^2\rrhomax^2}{\rrhomin},\eta\}$, it follows that
        % \begin{equation}
            \begin{align*}
                &\expec{\dotp{\Prm^{t+1}}{\mathbf{v}^{t+1}}_{\wtk}} \leq 
                \expec{\dotp{\Prm^t}{\mathbf{v}^t}_{\wtk-(\alpha\theta+\alpha\eta)\wta^\top \rxii \wta }} \\
                &\quad - \frac{1}{8} \expec{ \| {\bf v}^t \|^2_{\wtk}}  +\frac{3\alpha}{2}\expec{\|\Prm^t\|^2_{\wtk}}   \\
                &\quad + \frac{1 }{2} ( \alpha\eta^2 \rrhomax^2 + 5 \alpha^2 \theta^2 \rhomax^2 ) \expec{ \| \hat{\Prm}^t-\Prm^t \|_{\wtk}^2} \\
                &\quad + \frac{9}{2}\alpha^3 n L^2 \expec{\left\| \nabla f(\avgprm^{t}) \right\|^2} +(\frac{9\alpha^3L^2}{2n}+\frac{3\alpha^3}{2})\sum_{i=1}^n  \sigma_i^2.
            \end{align*}
        % \end{equation}
    \end{lemma}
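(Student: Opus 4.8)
The plan is to establish Lemma~\ref{lemma:xv_inner} by direct expansion of the inner product $\dotp{\Prm^{t+1}}{{\bf v}^{t+1}}_{\wtk}$ using the {\algoname} recursions \eqref{eq:pdc-recur-all}, followed by careful term-by-term estimation. First I would substitute the $\Prm$-update $\Prm^{t+1} = \Prm^t - \alpha(\nabla{\bf f}(\Prm^t;\xi^{t+1}) + \widetilde{\dprm}^t + \theta \wta^\top\wta(\xi_a^{t+1})\hat\Prm^t)$ and an analogous expansion of ${\bf v}^{t+1} = \alpha\widetilde\lambda^{t+1} + \alpha\nabla{\bf f}(({\bf 1}_n\otimes{\bf I}_d)\avgprm^{t+1})$, where $\widetilde\lambda^{t+1} = \widetilde\lambda^t + \eta\wta^\top\wta(\xi_a^{t+1})\hat\Prm^t$. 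After taking conditional expectation over $\xi^{t+1}$ and $\xi_a^{t+1}$ and using $\expec{\wta^\top\wta(\xi_a^{t+1})} = \wta^\top\rxii\wta$, the leading cross term becomes $\dotp{\Prm^t}{{\bf v}^t}_{\wtk - (\alpha\theta + \alpha\eta)\wta^\top\rxii\wta}$ (the $\alpha\theta$ piece from the $\hat\Prm^t$ term in the primal update, the $\alpha\eta$ piece from the dual update); replacing $\hat\Prm^t$ by $\Prm^t + (\hat\Prm^t - \Prm^t)$ throughout generates tracking-error residuals that I will bound via Young's inequality, producing the $\| \hat\Prm^t - \Prm^t \|_{\wtk}^2$ term with coefficient $\frac12(\alpha\eta^2\rrhomax^2 + 5\alpha^2\theta^2\rhomax^2)$.

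Next I would extract the crucial negative term $-\frac18\expec{\|{\bf v}^t\|_{\wtk}^2}$. This should come from the $-\alpha\dotp{\widetilde\lambda^t}{\,\cdot\,}$ and gradient-descent contributions together with the definition ${\bf v}^t = \alpha\widetilde\lambda^t + \alpha\nabla{\bf f}(\cdot)$: roughly, the $\Prm$-update contributes $-\alpha\|{\bf v}^t\|^2$-type terms, and after absorbing positive $\alpha^2$-order cross terms under the step size constraints $\theta \ge 4L^2/\rrhomin$ and $\alpha\theta \le \rrhomin/(2\rrhomax^2)$ etc., a net $-\frac18$ coefficient survives. The smoothness Assumption~\ref{assm:lip} is used to control $\|\nabla{\bf f}(({\bf 1}_n\otimes{\bf I}_d)\avgprm^{t+1}) - \nabla{\bf f}(({\bf 1}_n\otimes{\bf I}_d)\avgprm^t)\|$ by $L\|\avgprm^{t+1}-\avgprm^t\|$, which in turn unfolds (via the averaged primal update) into terms involving $\|\nabla f(\avgprm^t)\|^2$, $\sum_i\sigma_i^2$, and $\|\Prm^t\|_{\wtk}^2$ — these feed the $\frac92\alpha^3 nL^2\expec{\|\nabla f(\avgprm^t)\|^2}$, $(\frac{9\alpha^3L^2}{2n}+\frac{3\alpha^3}{2})\sum_i\sigma_i^2$, and (part of) the $\frac{3\alpha}{2}\expec{\|\Prm^t\|_{\wtk}^2}$ terms. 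The variance Assumption~\ref{assm:graph_var} handles the fluctuation of $\wta^\top\wta(\xi_a^{t+1})\hat\Prm^t$ around its mean, contributing $\sigma_A^2$-dependent terms that are absorbed into $\|\Prm^t\|_{\wtk}^2$ and $\|\hat\Prm^t-\Prm^t\|_{\wtk}^2$ under the constraints $\alpha \le \rrhomin/(2\sigma_A^2\rrhomax\theta)$.

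The main obstacle will be the careful bookkeeping of the many second- and third-order terms so that every coefficient matches the stated bound exactly: in particular, showing that all the stray positive contributions to $\expec{\|{\bf v}^t\|_{\wtk}^2}$ are dominated and leave precisely $-\frac18$, and that the cross terms between the stochastic-gradient noise, the graph-sampling noise, and the consensus direction $\wtk$ either vanish in conditional expectation (by independence of $\xi_i^{t+1}$, $\xi_a^{t+1}$, and ${\cal F}^t$) or are correctly routed into the $\sigma_i^2$ / $\|\Prm^t\|_{\wtk}^2$ / $\|\hat\Prm^t - \Prm^t\|_{\wtk}^2$ buckets. A secondary subtlety is keeping the weighting matrices consistent: repeatedly using the identities $\wta^\top\rxii\wta\wtk = \wta^\top\rxii\wta$, $\wtk^2 = \wtk$, and the spectral sandwich $\rrhomin\wtk \preceq \wta^\top\rxii\wta \preceq \rrhomax\wtk$ (Assumption~\ref{assm:rand-graph}) to convert between $\wtk$-, $\wta^\top\rxii\wta$-, and $\wta^\top\wta$-weighted norms while tracking which $\rrho$ vs.\ $\rho$ constant appears. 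I expect no conceptual difficulty beyond these estimates; the structure mirrors Lemmas~\ref{lemma:consensus} and~\ref{lemma:dual_err}, and the step size conditions have evidently been reverse-engineered to make the bound close.
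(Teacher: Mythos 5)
Your proposal follows essentially the same route as the paper's proof in Appendix~\ref{app:lemma_xvinner_proof}: expand both recursions, take conditional expectations so the leading cross term becomes $\dotp{\Prm^t}{{\bf v}^t}_{\wtk-(\alpha\theta+\alpha\eta)\wta^\top\rxii\wta}$, apply Young's inequality to the remaining cross terms, bound the error quantities via \eqref{eq:es_bound}--\eqref{eq:eh_bound} and Lemma~\ref{lemma:update_err}, and invoke the step size conditions. One small correction to your sketch: the $-\frac{1}{8}\expec{\|{\bf v}^t\|_{\wtk}^2}$ term originates from the $-{\bf v}^t$ that appears with coefficient \emph{one} in the primal expansion $\Prm^{t+1}=({\bf I}-\alpha\theta\wta^\top\rxii\wta)\Prm^t-{\bf v}^t+\serr^t+\gerr^t+{\bf e}_h^t$ paired with the leading ${\bf v}^t$ in the dual expansion, yielding $-\|{\bf v}^t\|_{\wtk}^2$ at order one (not ``$-\alpha\|{\bf v}^t\|^2$-type'' as you write, which could never survive as a constant), after which the Young's inequalities erode it to $\frac{3\alpha}{2}-\frac{1}{4}\le-\frac{1}{8}$ under $\alpha\le\frac{1}{12}$.
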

    \noindent See Appendix \ref{app:lemma_xvinner_proof} for the proof.
    \update{Lemmas \ref{lemma:dual_err} and \ref{lemma:xv_inner} relate the concerned terms in two recursive inequalities. Importantly, we observe that the constant terms related to the noise variance are weighted by ${\cal O}(\alpha^3)$, suggesting that they can be effectively controlled by the step sizes.}
    \begin{lemma} \label{lemma:xhat-err} 
    Under Assumptions \ref{assm:compress}, \ref{assm:f_var}--\ref{assm:graph_var} and the step size condition $\alpha\le\sqrt{\gamma\delta/(8\rrhomax^2\theta^2)}$ and $\gamma \le \update{\frac{1}{r}}$, then
        % \begin{equation}
            \begin{align*}
                &\expec{\| \hat{\Prm}^{t+1} - \Prm^{t+1} \|^2} \leq (1-\frac{\update{r}\gamma\delta}{4})\expec{ \| \hat{\Prm}^t-\Prm^t \|^2} \\
                &\quad + \frac{4}{\update{r}\gamma\delta}[\alpha^2\theta^2(\rrhomax^2+\sigma_{A}^2\rrhomax/2)+\alpha^2L^2] \expec{\left\| \Prm^t \right\|^2_{\wtk} } \\
                &\quad + \frac{4\alpha\theta}{\update{r}\gamma\delta} \expec{ \dotp{ \Prm^t}{ {\bf v}^t }_{\wta^\top \rxii \wta } } 
                + \frac{4}{\update{r}\gamma\delta} \expec{ \|{\bf v}^t\|^2_{\wtk}} \\
                &\quad + \frac{4\alpha^2}{\update{r}\gamma\delta} \expec{\left\| \nabla f(\avgprm^{t}) \right\|^2 } + \frac{2\alpha^2}{\update{r}\gamma\delta} \sum_{i=1}^n \sigma_i^2 +\gamma^2\sigma_{\xi}^2.
            \end{align*}
        % \end{equation} 
    % {\color{red} Alternatively, under an additional assumption of \[ \mathbb{E}_{ \bar{\xi}_q } \| \Prm -  \overline{Q} (\Prm; \bar\xi_q) \|_{\wtk}^2  \le (1-\delta)^2  \| \Prm \|_{\wtk}^2, \]
    % that is satisfied by compressors such as sparsifiers, we can achieve a tighter bound where
    %  \begin{equation}
    %     \begin{aligned}
    %         &\expec{\| \hat{\Prm}^{t+1} - \Prm^{t+1} \|_{\wtk}^2} \leq (1-\frac{\gamma\delta}{4})\expec{ \| \hat{\Prm}^t-\Prm^t \|_{\wtk}^2} \\
    %         &\quad + \frac{4}{\gamma\delta}[\alpha^2\theta^2(\rrhomax^2+\sigma_{A}^2\rrhomax/2)+\alpha^2L^2] \expec{\left\| \Prm^t \right\|^2_{\wtk} } \\
    %         &\quad + \frac{4\alpha\theta}{\gamma\delta} \expec{ \dotp{ \Prm^t}{ {\bf v}^t }_{\wta^\top \rxii \wta } } 
    %         + \frac{4}{\gamma\delta} \expec{ \|{\bf v}^t\|^2_{\wtk}} \\
    %         &\quad + \frac{2\alpha^2}{\gamma\delta} \sum_{i=1}^n \sigma_i^2 +\gamma^2\sigma_{\xi}^2.
    %     \end{aligned}
    % \end{equation} 
    % }
    % \htwai{Is $\frac{4\alpha^2}{\gamma\delta} \expec{\left\| \nabla f(\avgprm^{t}) \right\|^2 }$ the only difference? If so, I don't think the new bound is very useful.}
    \end{lemma}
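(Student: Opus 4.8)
The plan is to derive a one-step contraction for the tracking error $\expec{\|\hat{\Prm}^{t+1}-\Prm^{t+1}\|^2}$ directly from the lower-level recursion $\hat{\Prm}^{t+1}=\hat{\Prm}^{t}+\gamma Q(\Prm^{t+1}-\hat{\Prm}^{t};\xi_q^{t+1})$ together with Assumption~\ref{assm:compress}. Let $\Delta^t := \Prm^{t+1}-\hat{\Prm}^{t}$ denote the compressor input and let $\mathcal{F}_t$ collect all randomness up to and including the time-$(t{+}1)$ graph sample $\xi_a^{t+1}$ and gradient samples $\{\xi_i^{t+1}\}$, so that $\Delta^t$ is $\mathcal{F}_t$-measurable while $\xi_q^{t+1}$ is independent of $\mathcal{F}_t$. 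Then $\hat{\Prm}^{t+1}-\Prm^{t+1}=-\Delta^t+\gamma Q(\Delta^t;\xi_q^{t+1})$. Decomposing $Q(\Delta^t;\xi_q^{t+1})=\widehat{Q}(\Delta^t;\bar\xi_q^{t+1})+{\bm W}^{t+1}$ as in \eqref{eq:noisy_comp}, and using $r\gamma\le 1$, I would write the noiseless part as the convex combination $-\Delta^t+\gamma\widehat{Q}(\Delta^t;\bar\xi_q^{t+1})=(1-r\gamma)(-\Delta^t)+r\gamma\bigl(-(\Delta^t-\widehat{Q}(\Delta^t;\bar\xi_q^{t+1})/r)\bigr)$. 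Convexity of $\|\cdot\|^2$ and the contraction bound \eqref{eq:Q-contraction} give $\expec{\|-\Delta^t+\gamma\widehat{Q}(\Delta^t;\bar\xi_q^{t+1})\|^2\mid\mathcal{F}_t}\le\bigl[(1-r\gamma)+r\gamma(1-\delta)^2\bigr]\|\Delta^t\|^2=\bigl(1-r\gamma\delta(2-\delta)\bigr)\|\Delta^t\|^2\le(1-r\gamma\delta)\|\Delta^t\|^2$, the last inequality since $\delta\in(0,1]$. Because ${\bm W}^{t+1}$ is zero-mean and, conditionally on $\mathcal{F}_t$ and $\bar\xi_q^{t+1}$, uncorrelated with the rest, adding it back yields $\expec{\|\hat{\Prm}^{t+1}-\Prm^{t+1}\|^2}\le(1-r\gamma\delta)\expec{\|\Delta^t\|^2}+\gamma^2\sigma_\xi^2$; this $\gamma^2\sigma_\xi^2$ term then passes through unchanged to the final bound.

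The second step is to expand the exact identity $\Delta^t=(\Prm^t-\hat{\Prm}^t)+(\Prm^{t+1}-\Prm^t)$ via Young's inequality $\|a+b\|^2\le(1+\rho)\|a\|^2+(1+\rho^{-1})\|b\|^2$ with $\rho=\Theta(r\gamma\delta)$, chosen so that $(1-r\gamma\delta)(1+\rho)\le 1-\tfrac12 r\gamma\delta$ while $(1-r\gamma\delta)(1+\rho^{-1})\le 4/(r\gamma\delta)$. It then remains to bound $\expec{\|\Prm^{t+1}-\Prm^t\|^2}$ by the right-hand-side quantities of the lemma. Substituting the upper-level update $\Prm^{t+1}-\Prm^t=-\alpha\bigl(\nabla{\bf f}(\Prm^t;\xi^{t+1})+\widetilde{\dprm}^t+\theta\,\wta^\top\wta(\xi_a^{t+1})\hat{\Prm}^t\bigr)$, I would: (i) replace $\alpha\widetilde{\dprm}^t$ by ${\bf v}^t-\alpha\nabla{\bf f}(({\bf 1}_n\otimes{\bf I}_d)\avgprm^t)$ using the definition of ${\bf v}^t$, keeping the pair $-{\bf v}^t-\alpha\theta\,\wta^\top\wta(\xi_a^{t+1})\hat{\Prm}^t$ together so its cross term produces, after taking $\expec{\cdot\mid\mathcal{F}_{t-1}}$, exactly the term $\tfrac{4\alpha\theta}{r\gamma\delta}\expec{\langle\Prm^t,{\bf v}^t\rangle_{\wta^\top\rxii\wta}}$ (and the consensus-direction part of ${\bf v}^t$ contributes the $\alpha^2$-scaled $\|\nabla f(\avgprm^t)\|^2$ term); (ii) split $\nabla{\bf f}(\Prm^t;\xi^{t+1})$ into $\nabla{\bf f}(\Prm^t;\xi^{t+1})-\nabla{\bf f}(\Prm^t)$ (second moment $\le\sum_i\sigma_i^2$ by Assumption~\ref{assm:f_var}), $\nabla{\bf f}(\Prm^t)-\nabla{\bf f}(({\bf 1}_n\otimes{\bf I}_d)\avgprm^t)$ (bounded by $L^2\|\Prm^t\|_{\wtk}^2$ via the $L$-smoothness of Assumption~\ref{assm:lip}), and a consensus piece; and (iii) split the random Laplacian $\wta^\top\wta(\xi_a^{t+1})$ into its mean $\wta^\top\rxii\wta$, for which $\|\wta^\top\rxii\wta(\cdot)\|^2\le\rrhomax^2\|\cdot\|_{\wtk}^2$, plus a zero-mean fluctuation whose second moment is controlled through Assumption~\ref{assm:graph_var} by $\sigma_A^2\|\wta(\cdot)\|_{\bf R}^2\le\sigma_A^2\rrhomax\|\cdot\|_{\wtk}^2$.

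The main obstacle is that $\Prm^{t+1}-\Prm^t$ itself depends on $\hat{\Prm}^t$ through the term $\theta\,\wta^\top\wta(\xi_a^{t+1})\hat{\Prm}^t$, so writing $\hat{\Prm}^t=\Prm^t+(\hat{\Prm}^t-\Prm^t)$ here is unavoidable and reintroduces a term of order $\alpha^2\theta^2(\rrhomax^2+\sigma_A^2\rrhomax)\|\hat{\Prm}^t-\Prm^t\|^2$, multiplied by the large factor $4/(r\gamma\delta)$ from Young's inequality. This self-referential term must be reabsorbed into the contraction coefficient, and this is exactly where the two-timescale step-size coupling $\alpha\le\sqrt{\gamma\delta/(8\rrhomax^2\theta^2)}$ is used: it forces $\tfrac{4}{r\gamma\delta}\cdot\alpha^2\theta^2(\rrhomax^2+\sigma_A^2\rrhomax/2)\le\tfrac14 r\gamma\delta$ (after combining with $\gamma\le 1/r$), so the coefficient of $\expec{\|\hat{\Prm}^t-\Prm^t\|^2}$ collapses from $(1-\tfrac12 r\gamma\delta)+\Theta\bigl(\alpha^2\theta^2/(r\gamma\delta)\bigr)$ down to $1-\tfrac14 r\gamma\delta$.

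Finally, collecting the surviving terms with the coefficients generated above — $\tfrac{4}{r\gamma\delta}\bigl[\alpha^2\theta^2(\rrhomax^2+\sigma_A^2\rrhomax/2)+\alpha^2L^2\bigr]$ on $\expec{\|\Prm^t\|_{\wtk}^2}$, $\tfrac{4\alpha\theta}{r\gamma\delta}$ on the $\wta^\top\rxii\wta$-weighted inner product, $\tfrac{4}{r\gamma\delta}$ on $\expec{\|{\bf v}^t\|_{\wtk}^2}$, the $\tfrac{4\alpha^2}{r\gamma\delta}$ and $\tfrac{2\alpha^2}{r\gamma\delta}$ factors on $\expec{\|\nabla f(\avgprm^t)\|^2}$ and $\sum_i\sigma_i^2$, and the leftover $\gamma^2\sigma_\xi^2$ — gives the stated inequality. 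Throughout, the only delicate bookkeeping is the nested conditioning: since $\xi_a^{t+1}$, $\{\xi_i^{t+1}\}$ and $\xi_q^{t+1}$ are drawn fresh and mutually independently of the history, every cross term between an $\mathcal{F}_{t-1}$- or $\xi_a^{t+1}$-measurable quantity and a zero-mean fluctuation vanishes in expectation, which is what confines the variance-type contributions to additive $O(\alpha^2)$ and $O(\gamma^2)$ terms rather than polluting the inner products.
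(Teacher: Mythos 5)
Your proposal is correct and follows essentially the same route as the paper: the contraction of the compression step via Assumption \ref{assm:compress} (which you actually justify in more detail than the paper does, through the convex-combination argument that needs $r\gamma \le 1$), a Young's-inequality split of $\Prm^{t+1}-\hat{\Prm}^t$ into tracking error plus increment, a bound on $\expec{\|\Prm^{t+1}-\Prm^t\|^2}$ using the update together with the smoothness, gradient-variance and graph-variance assumptions and the identity $\|{\bf v}^t\|^2=\|{\bf v}^t\|_{\wtk}^2+\alpha^2\|\nabla f(\avgprm^t)\|^2$, and finally the reabsorption of the self-referential $\expec{\|\hat{\Prm}^t-\Prm^t\|^2}$ term via the two-timescale step-size condition. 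The only nit is constant bookkeeping: to land exactly on the stated coefficients the Young factor multiplying $\expec{\|\Prm^{t+1}-\Prm^t\|^2}$ should be $2/(r\gamma\delta)$ (as the paper obtains with $\tau = 2(1-r\gamma\delta)/(r\gamma\delta)$) rather than the looser $4/(r\gamma\delta)$ you target, since each term in the increment bound already carries an extra factor of $2$.
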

\noindent See Appendix \ref{app:lemma_xhaterr_proof} for the proof. \update{Especially, Lemma~\ref{lemma:xhat-err} also shows that the effect of communication noise can be controlled by the lower-level step size $\gamma$.} 
% \htwai{stopped here}

To upper bound $\|\nabla f(\avgprm^{t})\|^2$ alongside the miscellaneous coupling terms, we construct a potential function of the four error quantities. For some constants $\a,\b,\c,\d, \e > 0$ to be determined later, we define the potential function $F_t$ as
\begin{equation} \label{eq:Ft}
		\begin{aligned}F_{t}&=f(\avgprm^{t})+\a\|\Prm^{t}\|_{{\widetilde{\mathbf{K}}}}^{2}+\b\|\mathbf{v}^{t}\|_{{\widetilde{\mathbf{Q}}+\c\widetilde{\mathbf{K}}}}^{2}\\&+\d\left\langle\Prm^{t}\mid\mathbf{v}^{t}\right\rangle_{{\widetilde{\mathbf{K}}}} +\e\| \hat{\Prm}^t-\Prm^t \|^2 .
		\end{aligned}
\end{equation}
We observe that:

% The iterative relationship of the potential function can be established by the following lemma.

\begin{lemma} \label{lemma:para_choose} Under Assumptions \ref{assm:lip}--\ref{assm:graph_var}, \ref{assm:compress}. Set
\begin{equation} \label{eq:abcd_choice} 
    \begin{aligned}
        &\b = \a \cdot \frac{1}{\alpha \eta}, \quad \c = \frac{(\alpha\theta+\alpha\eta)\d - 2\alpha \theta \a- \e \frac{4}{\update{r}\gamma\delta}\alpha\theta}{2 \alpha\eta \b},\\
        &\d = \frac{1024\rhomax}{\update{r}\gamma\delta\rrhomin} \a,\quad \e = 8\rhomax \rrhomin^{-1} \a,
    \end{aligned} 
\end{equation}
for some $\a > 0$.
Then, for $\eta = \frac{\update{r}\gamma\delta}{8\rhomax}, \theta \ge \theta_{lb}, \alpha\le\alpha_{ub}$ [cf.~\eqref{eq:stepsize-thm}], $\gamma \le \update{\frac{1}{r}}$, it holds that $F_t \ge f(\avgprm^{t}) \ge f^* > -\infty$, and
\begin{equation} \label{eq:lemma_potential}
    \begin{aligned}
    & \expec{F_{t+1}} \leq \expec{F_t} -\frac{1}{16}\alpha \expec{\left\| \nabla f(\avgprm^{t}) \right\|^2} \\
    & \quad + \alpha^2\omega_{\sigma} \bar{\sigma}^2 - \frac{1}{4} \alpha\theta\rrhomin \a  \expec{\|\Prm^t\|_{\wtk}^2} +8\a \gamma^2\sigma_{\xi}^2\update{\frac{\rhomax}{\rrhomin}} .
\end{aligned}
\end{equation}
such that 
\begin{align*}
&\omega_{\sigma} = \frac{L}{2n} + \a \Big(n +\frac{12L^2}{\eta\rrhomin} +\frac{3\rrhomin\alpha\theta}{16}(3L^2+n)+\frac{16n\rhomax}{\gamma\delta\rrhomin} \Big).
\end{align*}
%     \begin{equation} \label{eq:Ft_decrease}
%     \begin{aligned}
%         & \expec{F_{t+1}} \leq \expec{F_t} + \omega_{f}\expec{\left\| \nabla f(\avgprm^{t}) \right\|^2}+\alpha^2\omega_{\sigma} \bar{\sigma}^2 \\
%         & \quad + \omega_x\expec{\| \Prm^t \|^2_{ \wtk}}+ \omega_v \expec{\| {\bf v}^t \|^2_{\wtk}} +8\a \gamma^2\sigma_{\xi}^2\\
%         & \quad + \omega_{\hat{x}} \expec{\| \hat{\Prm}^t-\Prm^t \|^2} + \expec{\dotp{\Prm^t}{{\bf v}^t }_{ \wxv }} ,
%     \end{aligned}
%     \end{equation}
% such that
% \begin{equation}
%             \begin{aligned}
%                 &\wxv = {\bf 0},\quad \omega_x = -\frac{1}{4} \alpha\theta\rrhomin \a , \quad \omega_v = -\a, \\ 
%                 &\omega_{\hat{x}} = -\frac{\gamma\delta}{16} \a, \quad \omega_{f} = -\frac{1}{16}\alpha, \\
%                 &\omega_{\sigma} = \frac{L}{2n} + \a \Big(n +\frac{12L^2}{\eta\rrhomin}\\
%                 &\quad+\frac{3\rrhomin\alpha\theta}{16}(3L^2+n)+\frac{16n\rhomax}{\gamma\delta\rrhomin} \Big),
%             \end{aligned}
% \end{equation}
% where $\theta_{lb}, \alpha_{ub}$ are defined in Theorem~\ref{theo:main_theorem}.
% \htwai{If $\wxv = 0$, why do we still need to keep the inner product term in (35)?}
\end{lemma}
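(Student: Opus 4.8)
The plan is to combine the five preceding lemmas (Lemmas \ref{lemma:descent}--\ref{lemma:xhat-err}) as a weighted sum, with weights $1, \a, \b, \d, \e$ on the respective quantities $f(\avgprm^t)$, $\|\Prm^t\|_{\wtk}^2$, $\|{\bf v}^t\|_{\wtq+\c\wtk}^2$, $\dotp{\Prm^t}{{\bf v}^t}_{\wtk}$, $\|\hat\Prm^t-\Prm^t\|^2$, and then to show that the specific choice \eqref{eq:abcd_choice} makes all the cross terms and all the non-negative ``bad'' coefficients either cancel or become absorbable, leaving only the desired descent in $\|\nabla f(\avgprm^t)\|^2$ and $\|\Prm^t\|_{\wtk}^2$ plus the noise floor. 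First I would write down $\expec{F_{t+1}} - \expec{F_t}$ by adding the five recursive inequalities with their weights, carefully collecting the coefficient of each of the six recurring objects: (i) $\expec{\|\nabla f(\avgprm^t)\|^2}$, (ii) $\expec{\|\Prm^t\|_{\wtk}^2}$, (iii) $\expec{\|{\bf v}^t\|_{\wtk}^2}$, (iv) the inner product $\expec{\dotp{\Prm^t}{{\bf v}^t}_{\wtk}}$ and its $\wta^\top\rxii\wta$-weighted variant, (v) $\expec{\|\hat\Prm^t-\Prm^t\|^2}$ (noting $\|\cdot\|_{\wtk}\le\|\cdot\|$ on the relevant subspace so these can be merged), and (vi) the variance constants $\sum_i\sigma_i^2$ and $\gamma^2\sigma_\xi^2$.

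Next I would verify each cancellation in turn. The $\|{\bf v}^t\|_{\wtk}^2$ coefficient: Lemma \ref{lemma:xv_inner} contributes $-\d/8$, Lemma \ref{lemma:dual_err} contributes $+\b\cdot 2\alpha$ (from the $(1+2\alpha)$ factor acting on the $\c\wtk$ part, plus lower-order), and Lemma \ref{lemma:xhat-err} contributes $+\e\cdot\frac{4}{r\gamma\delta}$; with $\b=\a/(\alpha\eta)$, $\d=\frac{1024\rhomax}{r\gamma\delta\rrhomin}\a$, $\e=\frac{8\rhomax}{\rrhomin}\a$ and $\eta=\frac{r\gamma\delta}{8\rhomax}$, one checks that $\d/8$ dominates, giving a net non-positive coefficient. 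The inner-product terms are the most delicate: the $\wta^\top\rxii\wta$-weighted inner products generated inside Lemmas \ref{lemma:consensus}, \ref{lemma:dual_err}, \ref{lemma:xv_inner}, \ref{lemma:xhat-err} must be forced to telescope against the $\d\dotp{\Prm^{t+1}}{{\bf v}^{t+1}}_{\wtk}$ term, and this is precisely what the definition of $\c$ in \eqref{eq:abcd_choice} is engineered to do — I would substitute $\c$ and confirm the coefficient of $\expec{\dotp{\Prm^t}{{\bf v}^t}_{\wta^\top\rxii\wta}}$ vanishes identically, while the residual $\wtk$-weighted inner product (appearing with factor $\d$ from Lemma \ref{lemma:xv_inner}'s leading term and factor $\d(\alpha\theta+\alpha\eta)$, etc.) is handled using $|\dotp{\Prm^t}{{\bf v}^t}_{\wtk}| \le \frac{c'}{2}\|\Prm^t\|_{\wtk}^2 + \frac{1}{2c'}\|{\bf v}^t\|_{\wtk}^2$ with $c'$ chosen $\propto\alpha\theta\rrhomin$, so that half the $\|\Prm^t\|_{\wtk}^2$ budget from Lemma \ref{lemma:consensus}'s contraction $-(3/2)\alpha\theta\rrhomin\a$ pays for it and the other half survives as the stated $-\frac14\alpha\theta\rrhomin\a\|\Prm^t\|_{\wtk}^2$.

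Then I would collect the $\|\Prm^t\|_{\wtk}^2$ coefficient: the dominant negative piece is $-(3/2)\a\alpha\theta\rrhomin$ from Lemma \ref{lemma:consensus}; the positive contributions are $\frac{\alpha L^2}{n}$ from Lemma \ref{lemma:descent}, $O(\b(\alpha^2\eta^2\rhomax^2+\alpha^3L^4)(\rrhomin^{-1}+\c))$ from Lemma \ref{lemma:dual_err}, $\frac32\d\alpha$ from Lemma \ref{lemma:xv_inner}, and $\frac{4\e}{r\gamma\delta}\alpha^2(\theta^2(\rrhomax^2+\sigma_A^2\rrhomax/2)+L^2)$ from Lemma \ref{lemma:xhat-err}; each is checked against one of the four terms inside the $\min$ defining $\alpha_{ub}$ (and the lower bound $\theta_{lb}$) to be at most $\frac14\a\alpha\theta\rrhomin$ each, so that $\frac32 - 4\cdot\frac14 \ge \frac14$ remains — this is where the somewhat baroque form of \eqref{eq:stepsize-thm} gets used term by term. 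Similarly the $\|\hat\Prm^t-\Prm^t\|^2$ coefficient combines the $-\frac{r\gamma\delta}{4}\e$ contraction from Lemma \ref{lemma:xhat-err} against the $O(\alpha\theta\rhomax^2/\rrhomin)\a$, $O(\alpha\eta^2\rhomax^2(\rrhomin^{-1}+\c))\b$, $O((\alpha\eta^2\rrhomax^2+\alpha^2\theta^2\rhomax^2))\d$ positive terms, again non-positive under the step-size choices; the $\|\nabla f(\avgprm^t)\|^2$ coefficient is $-\alpha/4$ from Lemma \ref{lemma:descent} plus $O(\alpha^3nL^2)(\b,\d,\e\text{-weighted})$ positive pieces, which reduce it only to $-\alpha/16$; and the leftover variance constants assemble into $\alpha^2\omega_\sigma\bar\sigma^2 + 8\a\gamma^2\sigma_\xi^2\rhomax/\rrhomin$ with exactly the stated $\omega_\sigma$. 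Finally, the claim $F_t \ge f(\avgprm^t) \ge f^\star$ follows because the $\wtq+\c\wtk$ and $\wtk$ matrices are positive semidefinite (needs $\c\ge 0$, which one verifies from \eqref{eq:abcd_choice} under the parameter regime) and the quadratic form $\a\|\Prm\|_{\wtk}^2 + \d\dotp{\Prm}{{\bf v}}_{\wtk} + \b\c\|{\bf v}\|_{\wtk}^2$ is non-negative once $\d^2 \le 4\a\b\c$, which I would check is implied by the chosen values. The main obstacle is bookkeeping: making sure the inner-product telescoping via $\c$ is exact and that the four-way split of the $\|\Prm^t\|_{\wtk}^2$ and $\|\hat\Prm^t-\Prm^t\|^2$ budgets across the four entries of the $\min$ in $\alpha_{ub}$ and the $\max$ in $\theta_{lb}$ is internally consistent; there is no conceptual difficulty beyond careful constant-tracking.
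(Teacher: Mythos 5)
Your proposal is correct and follows essentially the same route as the paper: form the weighted potential, sum Lemmas \ref{lemma:descent}--\ref{lemma:xhat-err}, use the choice of $\b,\c,\d,\e$ to kill the inner-product terms, and check each remaining coefficient against the entries of $\theta_{lb}$ and the $\min$ in $\alpha_{ub}$. The only (harmless) deviation is your plan to absorb a ``residual $\wtk$-weighted inner product'' by Young's inequality at the cost of part of the consensus contraction: with $\b=\a/(\alpha\eta)$ the $\wtk$-part of the inner-product coefficient cancels exactly alongside the $\wta^\top\rxii\wta$-part (the paper's matrix $\wx{}_v$ vanishes identically), so no such sacrifice is needed and the full $-(3/2)\a\alpha\theta\rrhomin$ budget is spent only on the genuinely positive $\|\Prm^t\|_{\wtk}^2$ contributions.
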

\noindent The lemma is obtained through satisfying the step size conditions in the previous lemmas \update{while optimizing the constants $\a, \b, \c, \d, \e$ to extract the best convergence rates on $\mathbb{E} [ \| \nabla f(\avgprm^{t}) \|^2 ]$}. See Appendix \ref{app:lemma_parachoose_proof} for the proof.

% From Lemma \ref{lemma:para_choose}, it is obvious that
% \begin{equation}
%     \begin{aligned}
%     & \expec{F_{t+1}} \leq \expec{F_t} -\frac{1}{16}\alpha \expec{\left\| \nabla f(\avgprm^{t}) \right\|^2} \\
%     & + \alpha^2\omega_{\sigma} \bar{\sigma}^2 - \frac{1}{4} \alpha\theta\rrhomin \a  \expec{\|\Prm^t\|_{\wtk}^2} +8\a \gamma^2\sigma_{\xi}^2 .
% \end{aligned}
% \end{equation}
Summing up the inequality \eqref{eq:lemma_potential} from $t=0 $ to $t=T-1 $ and divide both sides by $\alpha T$ gives us
\begin{align}
    &\frac{1}{4T} \sum_{t=0}^{T-1} \left\{ \frac{1}{4} \expec{\left\| \nabla f(\avgprm^{t}) \right\|^2}+  \theta\rrhomin \a \expec{\|\Prm^t\|_{\wtk}^2} \right\} \notag \\
    & \leq \frac{ \expec{F_0} - \expec{F_{T}}}{ \alpha T} + \alpha \omega_{\sigma} \bar{\sigma}^2 +8\a \frac{\gamma^2\sigma_{\xi}^2}{\alpha}. \label{proof:thm_last2}
\end{align}
Reshuffling terms yields the conclusions of the theorem.

\section{Numerical Experiments} \label{sec:Numerical Experiments}
This section demonstrates the effectiveness of {\algoname} on practical problems through numerical experiments. As we aim at testing the performance of {\algoname} in unreliable networks, we also evaluate the \emph{total number of bits transmitted across the network}. 
For benchmarking purposes, throughout this section, we focus only on decentralized algorithms that support compressed message exchanges. \update{Specifically, we compare CHOCO-SGD \cite{koloskova2019decentralized}, DoCoM \cite{yau2022docom}, DIMIX \cite{reisizadeh2023dimix}, LEAD \cite{liu2020linear} and CP-SGD \cite{xie2024communication}
% , and , 
which support different types of compression operators, as well as FSPDA-SA \cite{yau2024fully} that only utilizes sparsification and DSGD \cite{koloskova2020unified} as an uncompressed random graph baseline.}
% \update{We also compare with DSGD \cite{lian2017can}, LEAD \cite{liu2020linear}, DoCoM \cite{yau2022docom} in their respective environment that are special cases of the unreliable network setting considered by {\algoname}.} 
% \oscar{not sure about the above sentence. basically CHOCO, CP-SGD, LEAD, DoCoM all belongs to the same group of algos with contractive compressor. Only DIMIX and TiCoPD considered unreliable network.}
% \htwai{That's why they are special `cases'.}
The hyperparameters of the tested algorithms are hand tuned via a grid search on the magnitude to achieve the lowest gradient norm $\| \nabla f( \avgprm^T ) \|^2$ after $T$ iterations.  
\update{Our experiments are run on servers of Intel Xeon Gold 6148 CPU (for Sec.~\ref{sec:linear_reg}, \ref{sec:sigmoid}) and 8 $\times$ NVIDIA RTX 3090 GPU (for Sec.~\ref{sec:imagenet}).
Our implementation utilizes PyTorch with MPI communication, which supports pairwise communication between any pair of processes to simulate communication on random networks. For noisy communication, we emulate real-world network noise by artificially adding random noise information into the messages sent within MPI.}
% \htwaiupdate{hand-tuned for achieving the lowest gradient norm $\| \nabla f( \avgprm^T ) \|^2$ after $T$ iterations}. 
% \htwai{hardware config?}

We consider \update{two} types of compression operators $Q(\cdot; \xi_q)$. In addition to random quantization in \eqref{eq:qsgd} with $s = 2^4$ levels, top-$k$ 
% (resp.~random-$k$) 
sparsification keeps the $k$ coordinates with the highest magnitude.
% (resp.~chosen uniformly at random)
{Note that each compressed message with random quantization takes $d (\log_2(s)+1) + 32$ bits to transmit, while the sparsified message takes $64k$ bits to transmit.} Lastly, $G = (V,E)$ is a graph with $n$ agents, \update{generated as an ER graph with $p=0.5$ connectivity}. At each iteration, the {\algoname}, DIMIX and FSPDA-SA algorithms draw a random subgraph $G(\xi_a)$ \emph{with only 1 active edge} from $E$, while \update{CHOCO-SGD, DoCoM, CP-SGD and LEAD} take a broadcasting subgraph design where $G(\xi_a)$ is formed by taking the edges incident to \emph{only 1 randomly selected agent}. Notice that the latter two algorithms are only shown to converge under such restrictive type of time-varying communication graphs \cite{koloskova2019decentralized}. 
\vspace{.1cm}

\update{
\subsection{Linear Regression on Synthetic Data} \label{sec:linear_reg}
Our first numerical experiment considers a linear regression problem of $d=100$ dimensional features with
\begin{equation} \notag \textstyle 
    f_i(\prm) = \frac{1}{30}\sum_{j=1}^{30} \| \dotp{\prm}{{\bf z}_{i,j}} - \phi({\bf z}_{i,j}; \epsilon_{i,j}) \|^2,
\end{equation}
i.e., each agent holds $m_i=30$ samples. We consider a setting of $n=10$ agents. 
The local features vectors are ${\bf z}_{i,j} \sim {\rm Uniform}(0, 1)^{100}$ and local labels are $\phi({\bf z}_{i,j}; \epsilon_{i,j}) = \dotp{\hat{\prm}}{{\bf z}_{i,j}} + \epsilon_{i,j}$ such that $\epsilon_{i,j} \sim {\rm Uniform}(0, 0.1)$ and the ground truth parameter $\hat{\prm}$ satisfies $\hat{\prm} \sim {\rm Uniform}(-1, 1)^{100}$. Note this is the same problem setup as \cite{reisizadeh2023dimix}.
Fig.~\ref{fig:dimix_syn_exp} compares {\algoname} with the benchmark algorithms in the case with noiseless communication, i.e., $\sigma_{\xi} = 0$. As observed, with a 4-bit randomized quantizer, {\algoname} achieves the fastest convergence in terms of the number of bits transmitted. 
However, we notice that DiMIX only converges to a sub-optimal solution. We speculate that this is due to the slow (practical) convergence behavior of algorithms that rely on the DSGD framework.
% By the observation in Figure \ref{fig:dimix_syn_exp}, DIMIX only manage to converge to a sub-optimal solution, whereas other baselines converge to a better solution at different communication cost. By utilizing one edge random graphs, TiCoPD stands out among the baselines to converge at the optimal solution with the least bits transmitted.
}

\begin{figure}[t]
	\centering
    \includegraphics[width=0.9\linewidth]{./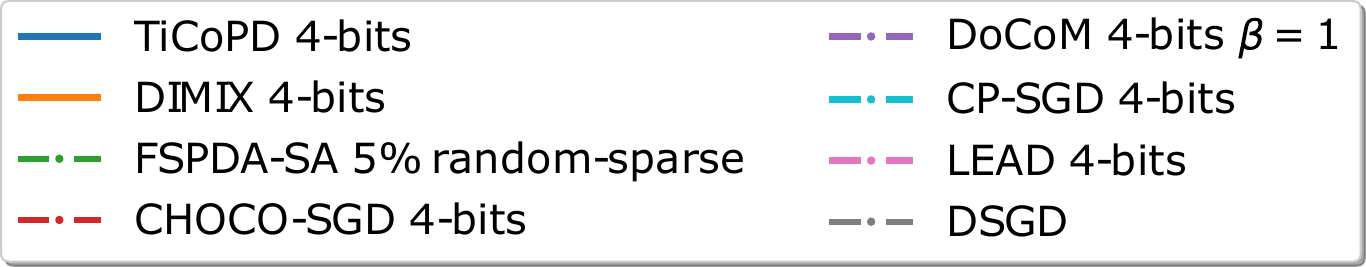} \\[.5em]
    \includegraphics[width=0.425\linewidth]{./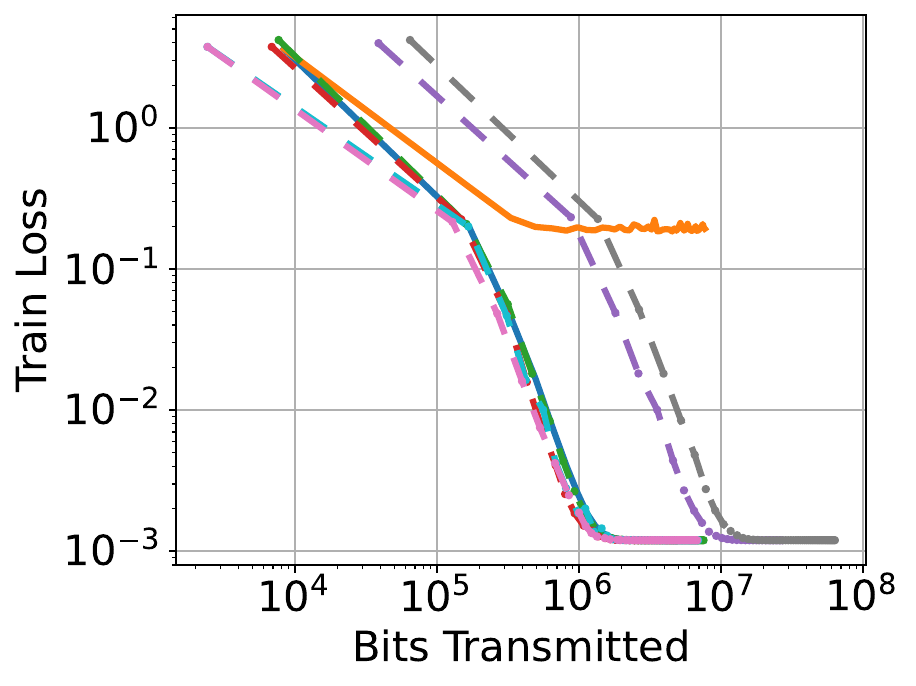}
	\includegraphics[width=0.425\linewidth]{./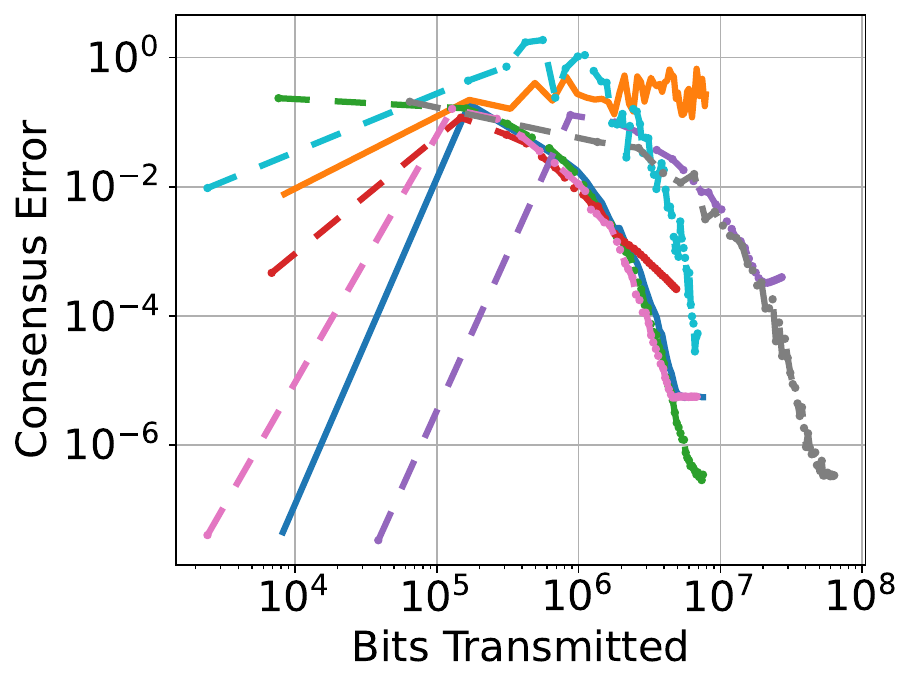}\\
	\includegraphics[width=0.425\linewidth]{./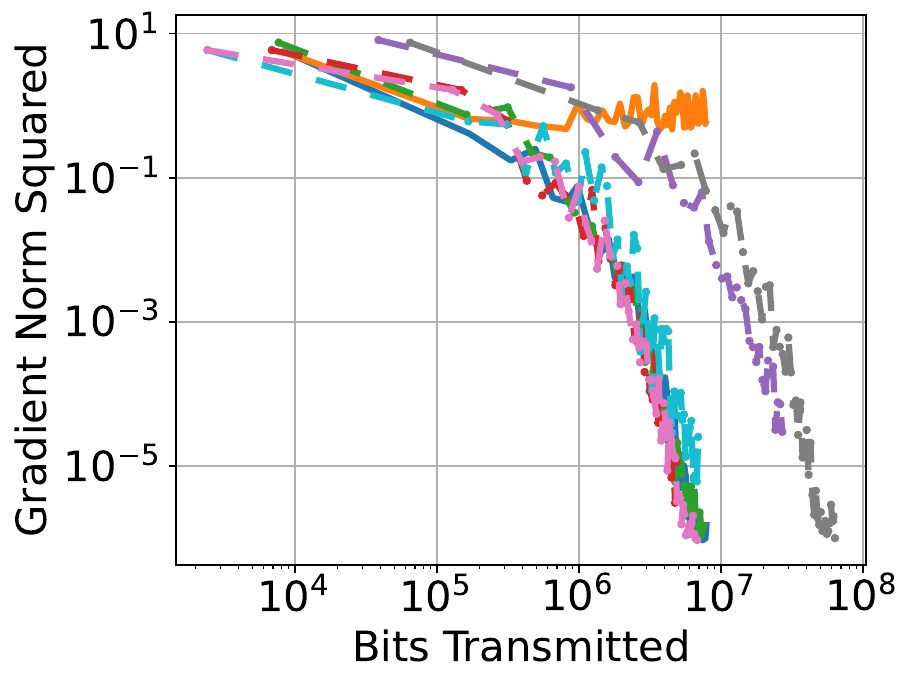}
    \vspace{-.2cm}
	\caption{Convergence over noiseless communication channel with random edge activation \update{on the linear regression problem}. %(Top) against the number of bits transmitted over the network. (Bottom) against the iteration number.
    }\vspace{-.2cm}
	\label{fig:dimix_syn_exp}
\end{figure}

\update{\subsection{Non-convex Sigmoid Loss on Synthetic Data}\label{sec:sigmoid}} 
Our second set of numerical experiments considers learning a linear model from a synthetic dataset to simulate a controlled decentralized learning environment with heterogeneous data. We consider a set of $n=10$ agents, where each agent $i$ holds a set of $100$ observations $\{{\bf z}_{i,j}, \phi({\bf z}_{i,j})\}_{j \in [100]}$. The feature vectors ${\bf z}_{i,j} \in \mathbb{R}^{100}$ are generated as ${\bf z}_{i,j} \sim \mathcal{N}({\bf m}_i , 0.5 {\bf I})$, where ${\bf m}_i \sim {\rm Uniform}\left(\frac{-n/2 + i-1}{n/2}, \frac{-n/2 + i}{n/2}\right)^{100}$.  The labels $\phi({\bf z}_{i,j}) \in [10]$ are determined through the map $\phi({\bf z}) = \arg \max_{k} \{ {\bf z}^\top {\bf x}^{\rm truth}_k \}$, where ${\bf x}^{\rm truth}_k \sim {\rm Uniform}(-1,1)^{100}$, $k \in [10]$ is the ground truth model for label $k$. To learn the linear model $\prm^{\rm truth} = ( \prm_1^{\rm truth} , \ldots, \prm_{10}^{\rm truth} ) \in \mathbb{R}^{1000}$, we consider the local objective function $f_i(\prm)$ as the sigmoid loss:
\begin{align} \notag
\sum_{k=1}^{10} \Big( \frac{1}{100} \sum_{j=1}^{100} {\rm sigmoid} ( \mathds{1}_{ \{ \phi({\bf z}_{i,j}) = k \} } \, \prm_{k}^\top {{\bf z}_{i,j}} ) + \frac{10^{-4}}{2}  \| \prm_{k} \|^2 \Big),
\end{align}
where ${\rm sigmoid}(y) = (1 + e^{-y})^{-1}$ for any $y \in \mathbb{R}$ and $\mathds{1}_{ \{ \cdot \} } \in \{ \pm 1\}$ is the indicator function.

\begin{figure}[t]
	\centering
    \includegraphics[width=0.9\linewidth]{./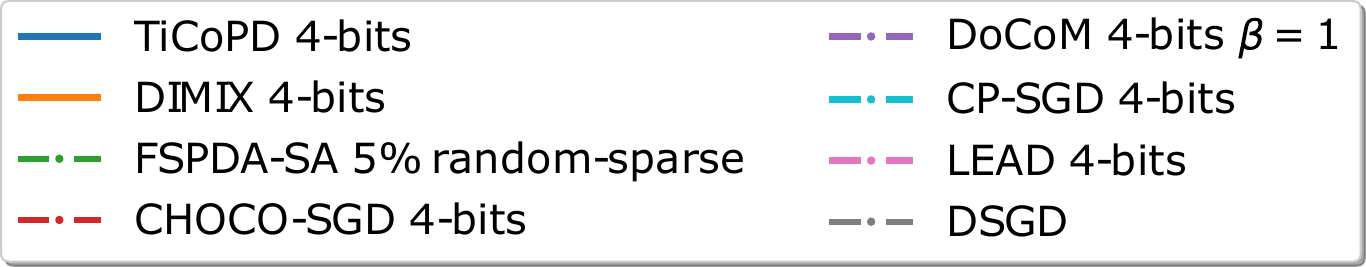} \\[.5em]
    \includegraphics[width=0.425\linewidth]{./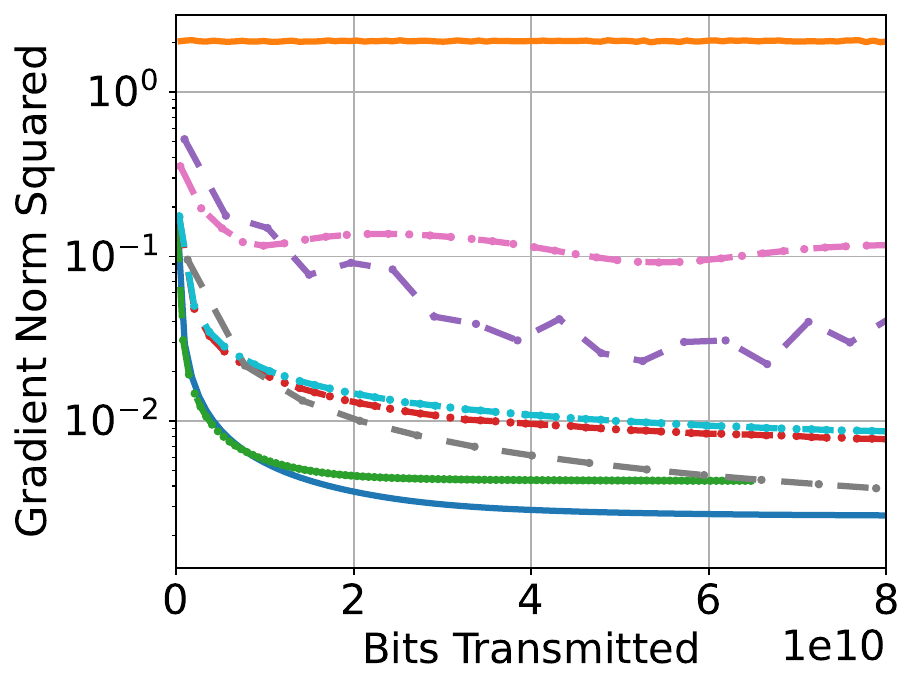}
	\includegraphics[width=0.425\linewidth]{./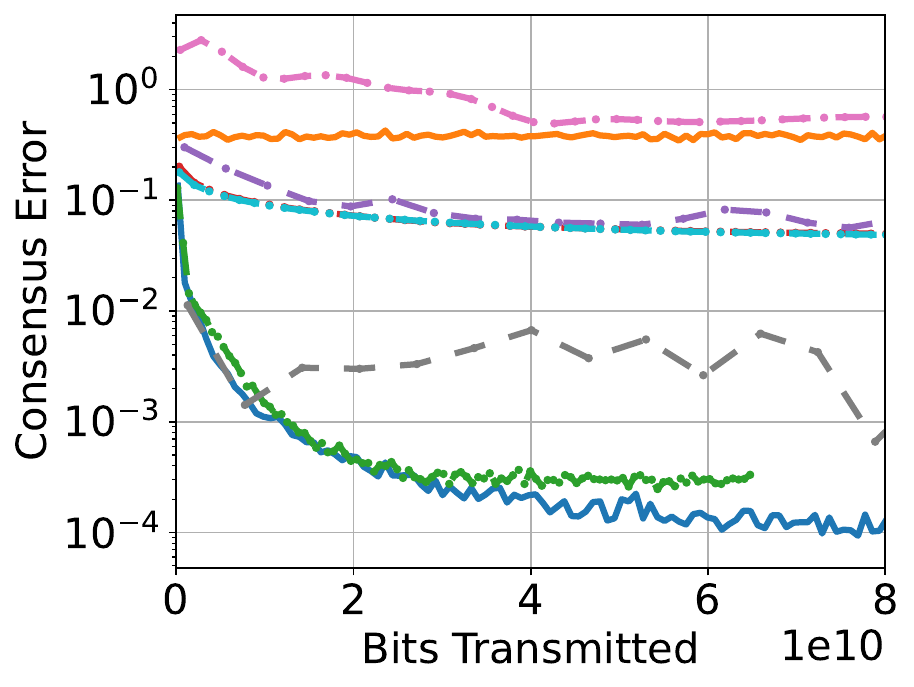}\\
	\includegraphics[width=0.425\linewidth]{./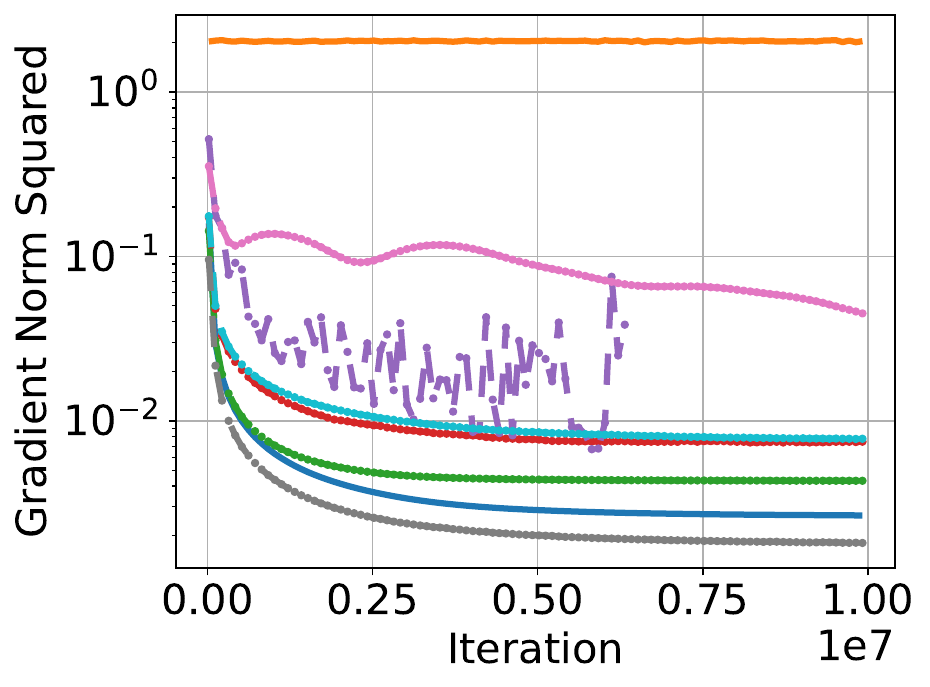}
	\includegraphics[width=0.425\linewidth]{./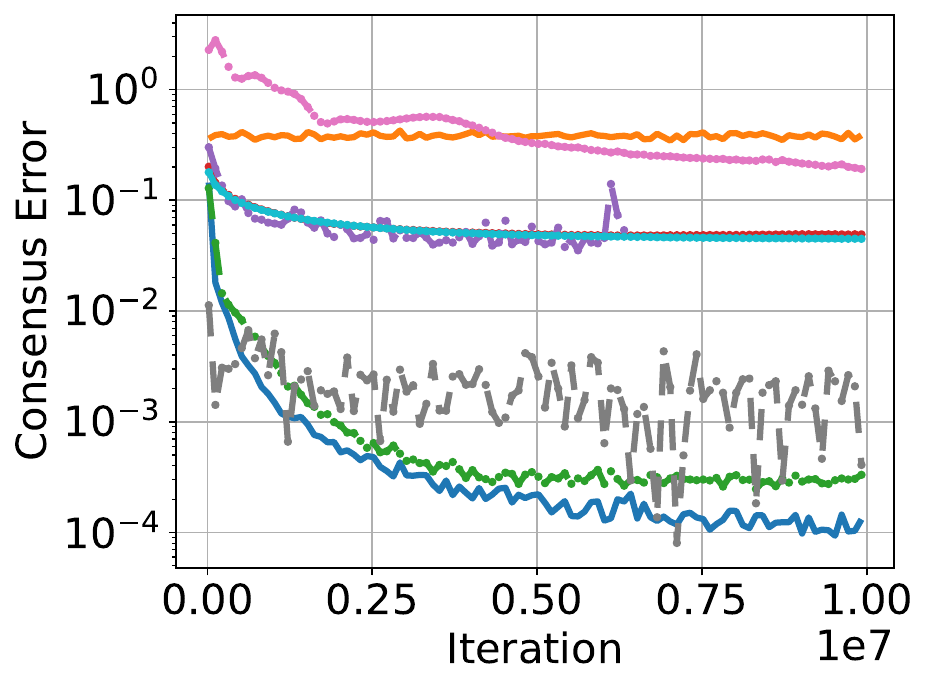}
    \vspace{-.2cm}
	\caption{Convergence over noiseless communication channel with random edge activation \update{for the non-convex sigmoid loss minimization}. (Top) against the total number of bits transmitted over the network. (Bottom) against the iteration number.}\vspace{-.2cm}
	\label{fig:syn_exp}
\end{figure}

Fig.~\ref{fig:syn_exp} compares the performance of {\algoname} with benchmarked algorithms when the communication network is \emph{noiseless}, i.e., $\sigma_{\xi} = 0$. For {\algoname}, we have used $\eta = 0.005, \gamma = 1, \theta = 10^2, \alpha = 10^{-4}$. 
% \htwai{Are these discussions still valid? They do not seem to match the figures displayed.}
\update{Observe that {\algoname} finds the best near-stationary solution among other communication compressed baselines, and achieves the smallest consensus error.}

% {Notice that algorithms such as CP-SGD achieves similar convergence speed as {\algoname} in terms of the iteration number, yet {\algoname} achieves better communication efficiency as the latter supports computation fully randomized graph topology. Our result corroborates with Theorem~\ref{theo:main_theorem}.} 

\begin{figure}[t]
	\centering
    \includegraphics[width=0.99\linewidth]{./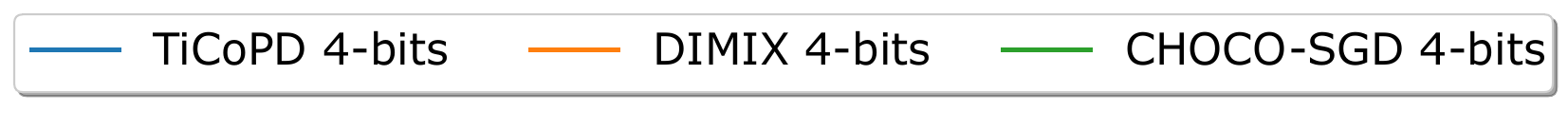} \\[.5em]
    \includegraphics[width=0.425\linewidth]{./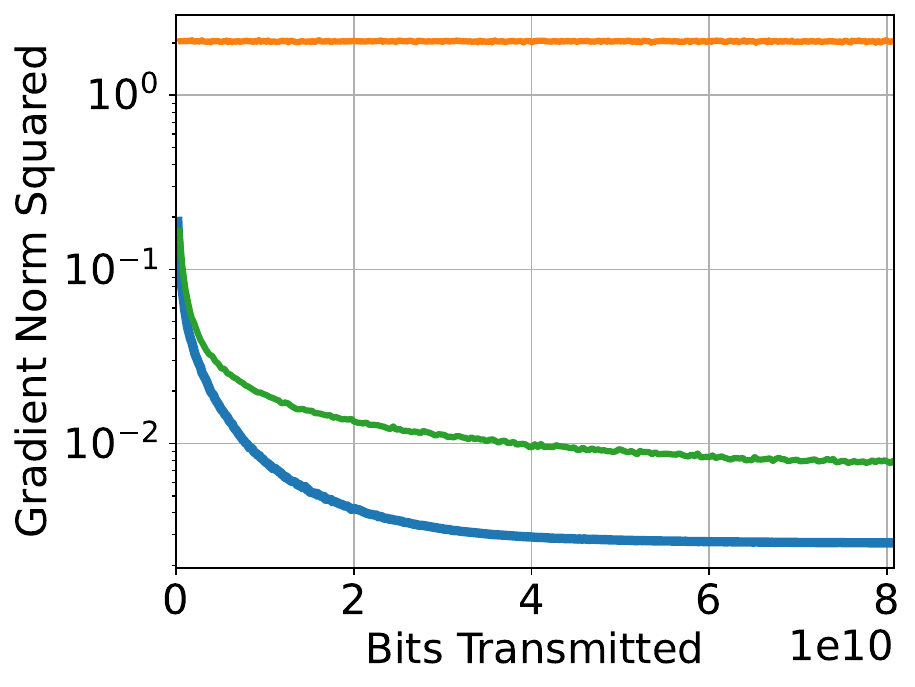}
	\includegraphics[width=0.425\linewidth]{./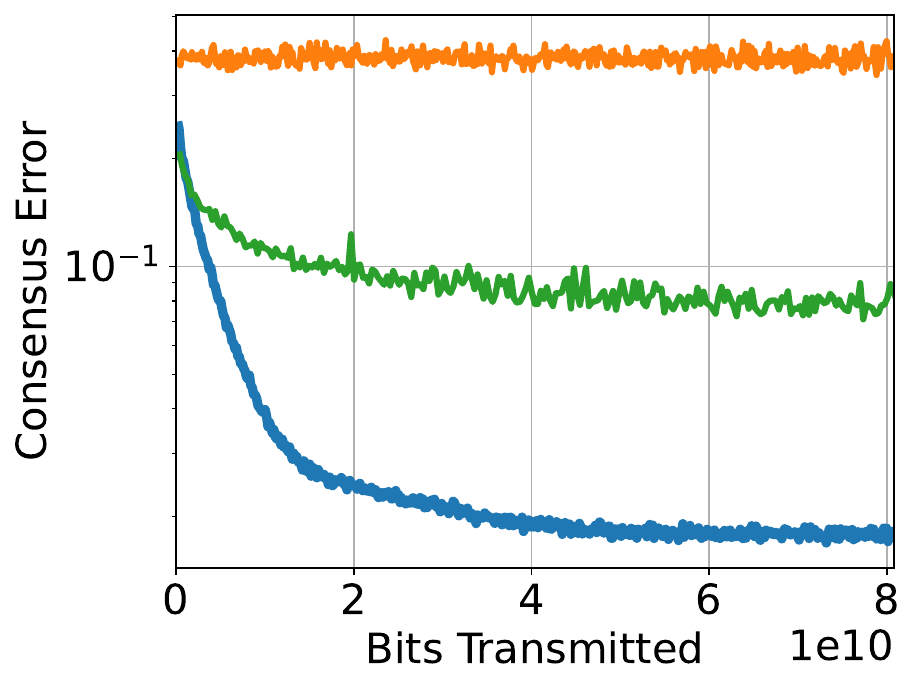}\\
	\includegraphics[width=0.425\linewidth]{./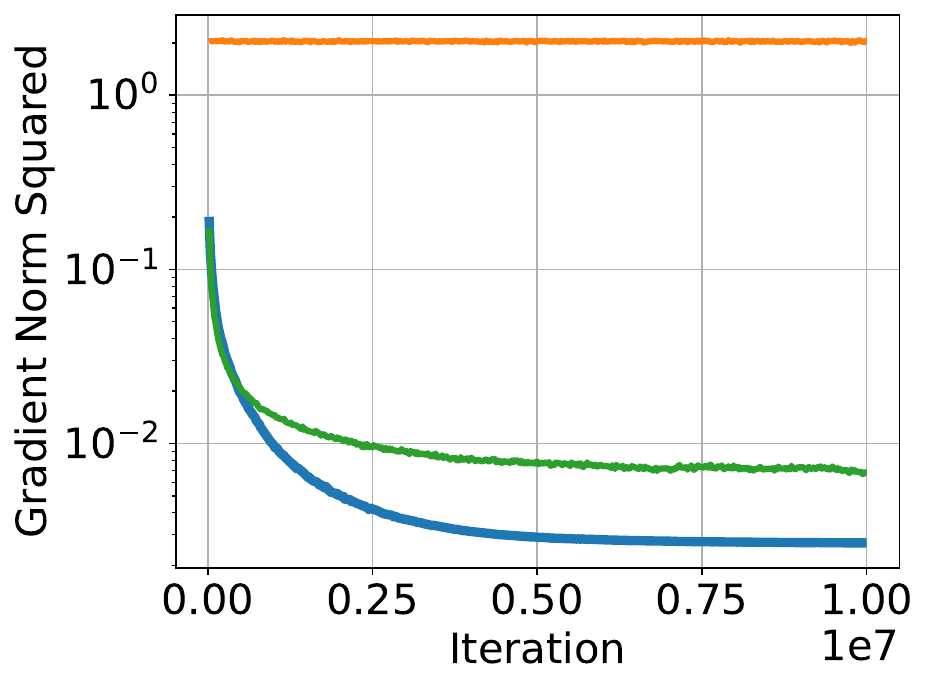}
	\includegraphics[width=0.425\linewidth]{./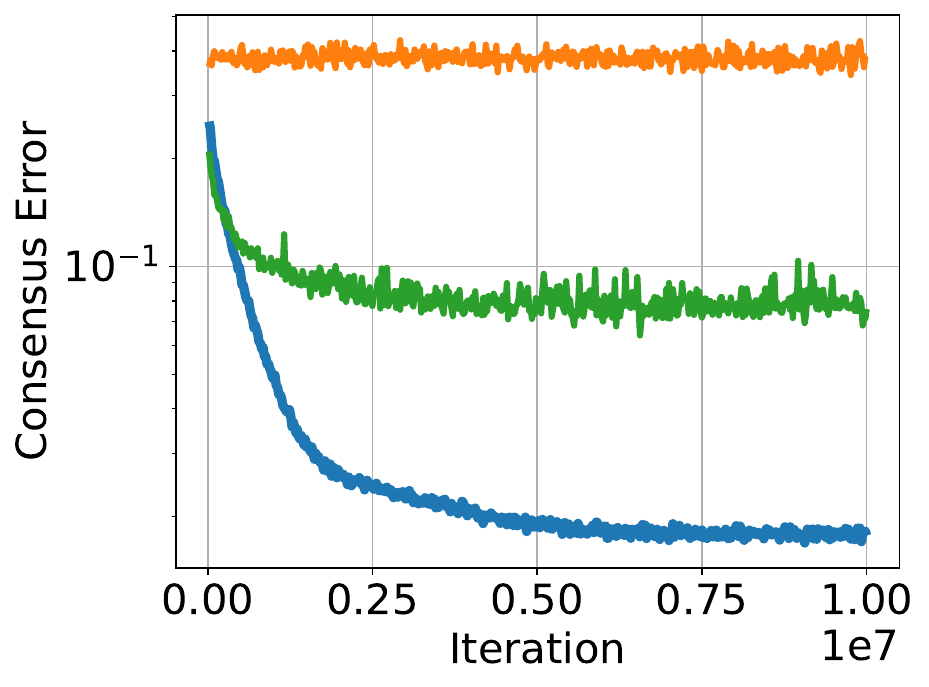}
    \vspace{-.2cm}
	\caption{Convergence of compressed algorithms over communication channel with random edge activation and \emph{additive communication noise} \update{on the non-sigmoid loss minimization problem}. (Top) against number of bits transmitted over the network. (Bottom) against iteration number.}\vspace{-.2cm}
	\label{fig:syn_exp_noise}
\end{figure}

\begin{figure}[t]
	\centering
	\includegraphics[width=0.8\linewidth]{./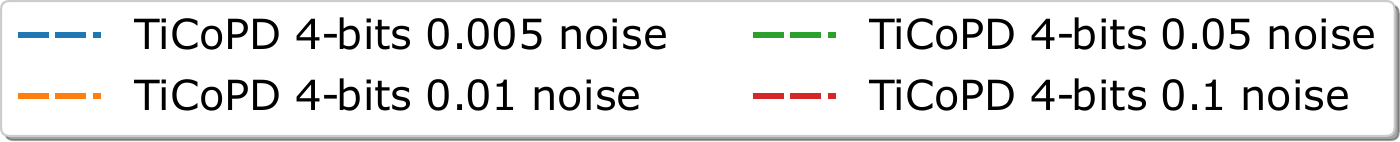} \\[.5em]
	\includegraphics[width=0.425\linewidth]{./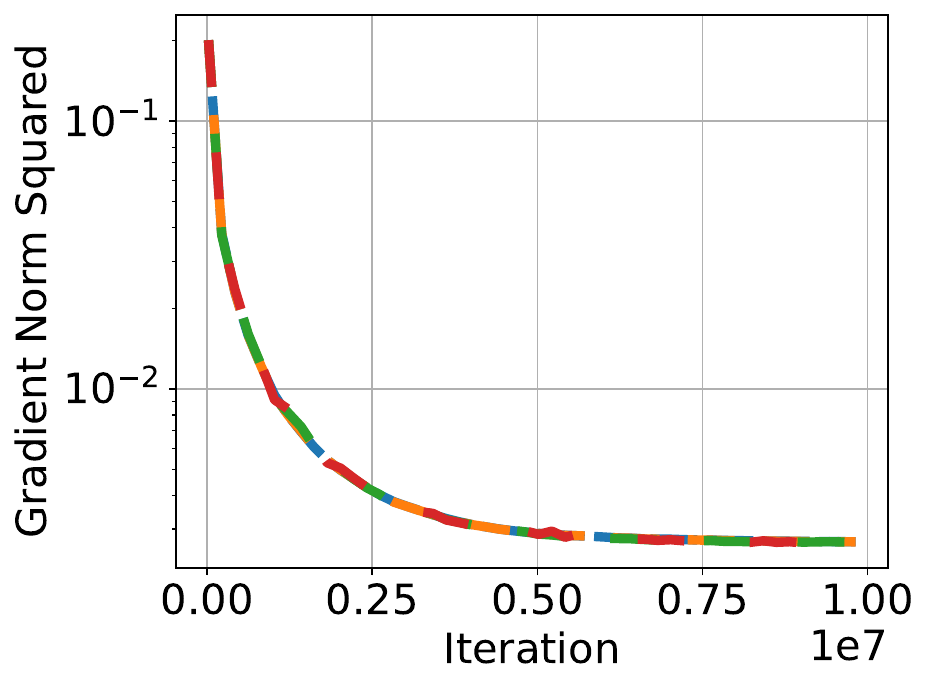}
	\includegraphics[width=0.425\linewidth]{./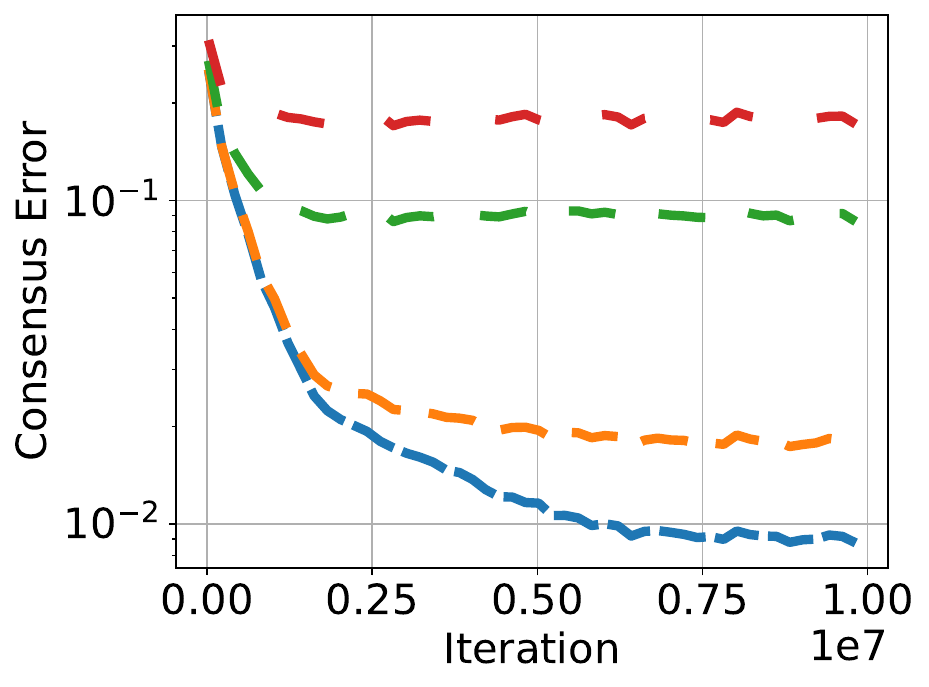}
    \vspace{-.2cm}
	\caption{Convergence of {\algoname} over communication channel with random edge activation and different levels of \emph{additive noise} \update{on the non-convex sigmoid loss minimization problem}.} 
	\label{fig:syn_exp_noiselevel}
\end{figure}

We next study the effects of noisy communication channel when $\sigma_{\xi} = 0.01$ on {\algoname} \update{in Fig.~\ref{fig:syn_exp_noise}}. Recall from the discussions following Theorem~\ref{theo:main_theorem}, the introduction of communication noise may slow down the convergence rate of {\algoname}, 
% yet the algorithm can still manage to find a near-stationary and consensual solution of \eqref{eq:origin_problem}. 
\update{yet the algorithm can still manage to find a solution within a neighborhood of a stationary point with radius proportional to $10^{-2}$ of \eqref{eq:origin_problem}.}
% Notice that we adopted a decreasing primal step size for all algorithms where $\alpha$ is reduced by 5 times at every $5 \times 10^6$ iterations. 

Fig.~\ref{fig:syn_exp_noise} compares the convergence behavior of {\algoname} to DIMIX and CHOCO-SGD. We note that under the noisy channel setting, it is not clear if CHOCO-SGD will converge, yet with a carefully tuned stepsize, DIMIX can theoretically converge to a stationary and consensual solution of \eqref{eq:origin_problem} \cite{reisizadeh2023dimix}. From the figure, we observe that {\algoname} converges as predicted by our theorem. Meanwhile, CHOCO-SGD fails to find a consensual solution and DIMIX fails to find a stationary solution.
\update{For {\algoname}, we set $\gamma=0.1,\theta=100, \eta = 10^{-4}$, $\alpha = 10^{-4}$.}
Lastly, Fig.~\ref{fig:syn_exp_noiselevel} compares the performance of {\algoname} at different levels of communication noise $\sigma_{\xi} \in \{ 0.005, 0.01, 0.05, 0.1 \}$. As predicted by Theorem~\ref{theo:main_theorem}, varying the communication noise level does not affect the convergence rates of {\algoname}, but it affects the magnitude of the dominant term in the stationarity and consensual error of the solutions found. 

\begin{figure}[t]
    \centering
    \includegraphics[width=0.9\linewidth]{./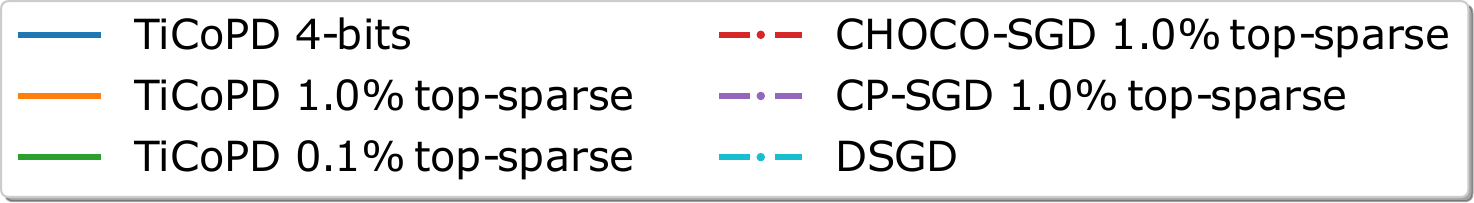} \\[.25cm]
    \includegraphics[width=0.425\linewidth]{./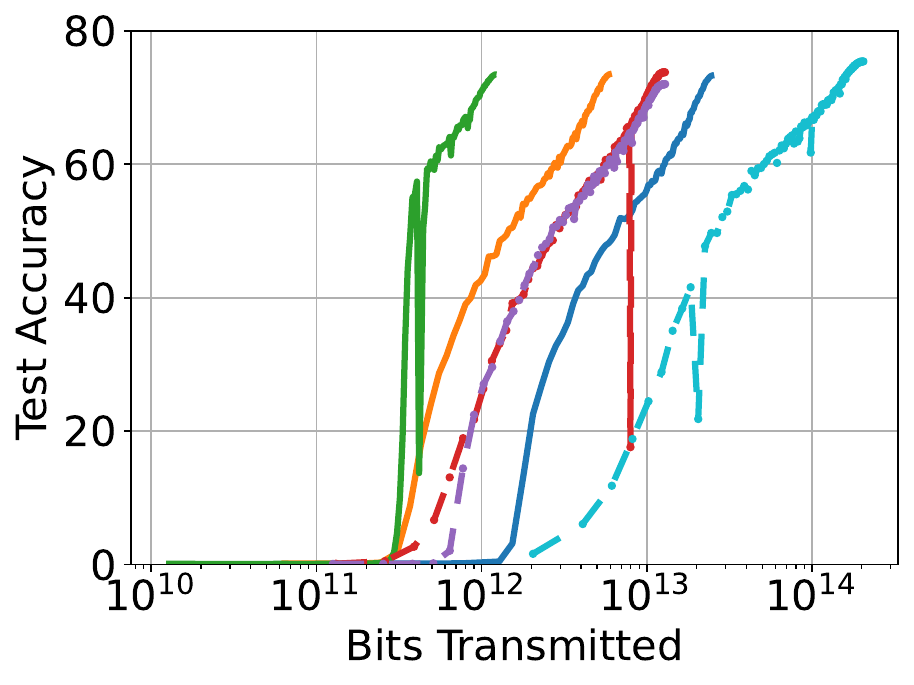}
    \includegraphics[width=0.425\linewidth]{./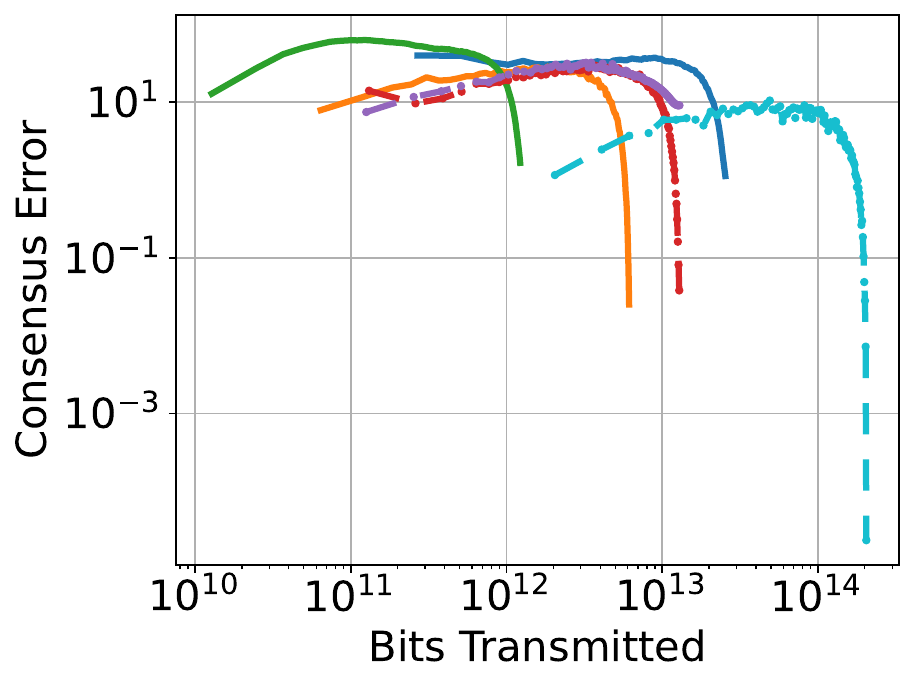} \\
    \includegraphics[width=0.425\linewidth]{./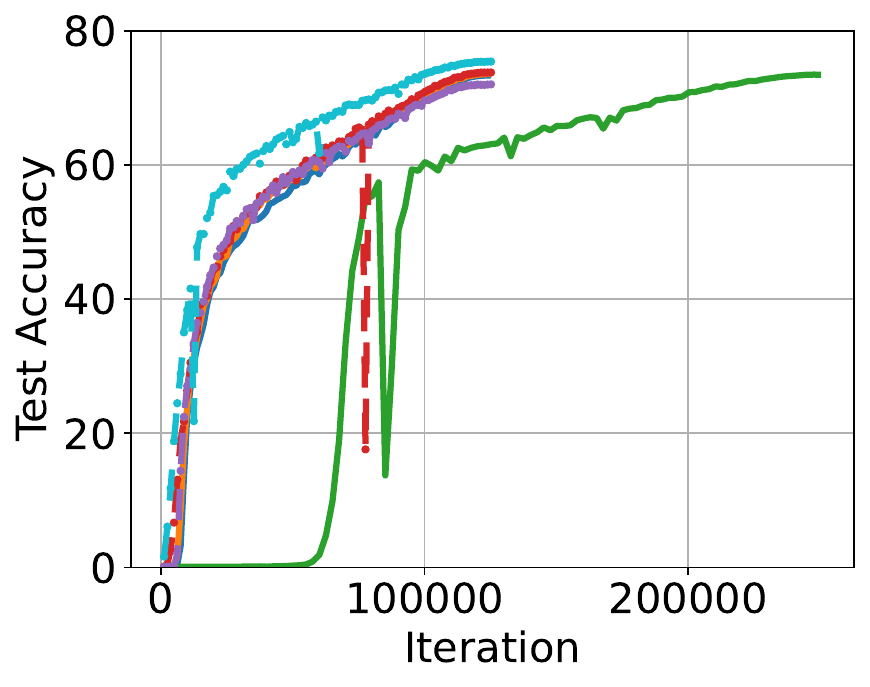} 
    \includegraphics[width=0.425\linewidth]{./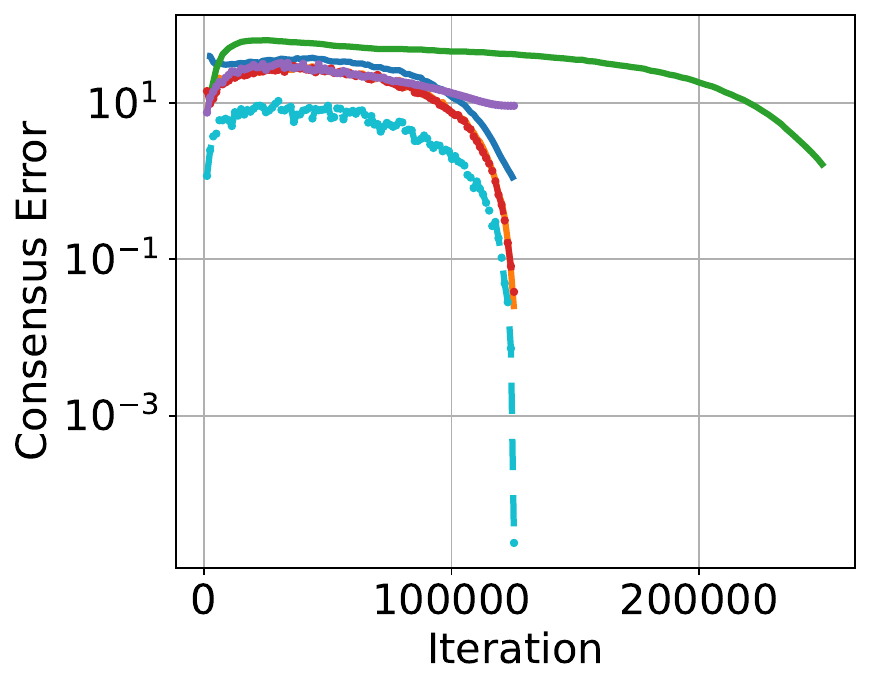}\vspace{-.2cm}
    \caption{Convergence of the training problem of ResNet-50 for Imagenet classification. The performance is evaluated on the network-averaged model $\avgprm$. (Top) against number of bits transmitted over the network. (Bottom) against iteration number.} 
    \label{fig:imagenet}
\end{figure}

\begin{table}[t] 
\centering
% \resizebox{\linewidth}{!}{
\begin{tabular}{l|cc}
\toprule
  Compressor    & Forward+Backward & Comm. \\
\midrule
% 4-bits (FP) (V100)     & 0.147 s          & 0.072 s     & 0 s   \\
% 1\% sparse (V100) & 0.147 s          & 0.042 s     & 0 s  \\
% 32-bits (RTX 3090, float32)     & 0.256 s          & 0.044 s     & 0.034 s   \\
% 4-bits (RTX 3090, int8) & 0.255 s & 0.069 s & 0.032 s \\
% 4-bits (RTX 3090, bool) & 0.253 s & 0.062 s & 0.040 s \\
% 8-bits (RTX 3090, bool) & 0.255 s & 0.063 s & 0.042 s \\
% 16-bits (RTX 3090, bool) & 0.255 s & 0.062 s & 0.039 s \\
% \midrule
4-bits (RTX 3090)     & 0.252 s          & 0.089 s     \\
8-bits (RTX 3090)     & 0.253 s          & 0.220 s     \\
16-bits (RTX 3090)     & 0.253 s          & 0.256 s     \\
\midrule
1\% sparse (RTX 3090) & 0.253  s & 0.011 s   \\
\bottomrule 
\end{tabular}\vspace{.1cm}
% }
\caption{Average time of computation (i.e., forward and backward step) and communication per iteration on the Resnet-50 experiment with MPI backend.} \label{tab:runtime_stat}
\end{table}

\update{\subsection{Distributed Neural Network Training on Real Data}\label{sec:imagenet}} 
The \update{third} set of experiments considers the case of a deep neural network distributed training task. We consider learning a classifier model based on the ResNet-50 architecture \update{$\phi({\bf z}; \prm): \mathbb{R}^{256 \times 256 \times 3} \rightarrow \mathbb{R}^{1000}$ (with $d=2.56 \times 10^9$ parameters) on the {\tt ImageNet} dataset $\{ ({\bf z}_j, {\bf y}_{j}) \}_{j=1}^m$ for image features ${\bf z}_k \in \mathbb{R}^{256\times 256 \times 3}$ and image class index ${\bf y}_k \in [1000]$ (with $m = 1,281,168$ samples, split equally to $n=10$ agents with shuffling) and $f_i(\prm)$ as the standard cross entropy loss:
\begin{equation}
   f_i(\prm) = - \frac{1}{m_i} \sum_{j=1}^{m_i} \log \frac{\exp(\phi({\bf z}_{i,j}; \prm)_{{\bf y}_{i,j}})}{\sum_{k=1}^{1000} \exp(\phi({\bf z}_{i,j}; \prm)_{k})}
\end{equation}}Note that {\algoname} coupled with \update{1\% sparsification compression and 1-edge activated random communication graph offers a communication compression of up to 40$\times$ per iteration compared to uncompressed DSGD on 1-edge random graphs.}
Further savings can be achieved with a more aggressive compression scheme, e.g., with a $0.1\%$-sparsifier. 
Fig.~\ref{fig:imagenet} shows the convergence of {\algoname} and other baselines under different order of compression \update{on 1-edge random graph.}
As predicted by our theory, the network-averaged iterate of TiCoPD will converge to the same degree of error after the transient time, at the cost of only increasing the consensus error. 
\update{We adopt the same learning rate scheduling from \cite{loshchilov2016sgdr} which applies 5 epochs of linear warm-up followed by a cosine decay till the end of training. For TiCoPD with 4-bits quantization, we set $\gamma = 1, \theta=0.05, \eta = 10^{-8}, \max_t \alpha_t = 0.1$. For TiCoPD with 1\% top-sparse, we set $\gamma = 1, \theta=0.05, \eta = 10^{-7}, \max_t \alpha_t = 0.1$. For TiCoPD with 0.1\% top-sparse, we set $\gamma = 1, \theta=0.02, \eta = 10^{-13}, \max_t \alpha_t = 0.05$.}
\update{Lastly, Table~\ref{tab:runtime_stat} compares the average time of computation and communication for this experiment. We observe that more aggressive compression schemes can reduce the communication time.} 
% \oscar{not sure about the last sentence because the actual uncompressed implementation can be faster than our compressed implementation...}
% \htwai{When they are compared on the same ground (i.e., using pack-bits, etc.), the statement is true.}

%  We remark that the impact of consensus error is negligible for systems that can perform all-reduce communication to obtain the network-averaged model after training.

\section{Conclusion}    \label{sec:Conclusion}
    This paper studies a communication efficient primal-dual algorithm for decentralized stochastic optimization with support for time-varying graphs and nonlinear compression schemes such as quantized message exchanges. Our key innovation is to incorporate majorization-minimization and two-timescale updates into the  augmented Lagrangian framework. In $T$ iterations, the resultant {\algoname} algorithm finds an ${\cal O}(T^{-\frac{1}{2}})$-stationary solution for smooth (possibly non-convex) problems in the absence of communication noise, and ${\cal O}(T^{-\frac{1}{3}})$-stationary solution in the presence of communication noise. We envisage that the proposed algorithmic framework can inspire the design of sophisticated decentralized algorithms such as algorithms with Nesterov acceleration. 
    \update{For future works, it is important to extend TiCoPD to constrained problems and incorporate acceleration techniques.}

    % \update{The major limitation of TiCoPD is that the algorithm only works for smooth and unconstrained problems, i.e., it requires the objective function to be differentiable and the gradient map to be Lipschitz continuous. In addition, it does not incorporate acceleration techniques such as using adaptive step sizes, momentum, etc. However, we note that even with the above limitations, TiCoPD has already achieved state-of-the-art performance for a number of practical problems.}

\appendices
\section{Proof of Lemma \ref{lemma:descent}} \label{app:lemma_descent_proof} 
As ${\bf 1}^\top \wta^\top  = {\bf 0}$, the updates of $\avgprm$ satisfies:
\begin{equation}
	\begin{aligned}
	\avgprm^{t+1} % &= \frac{1}{n} ( {\bf 1}^\top \otimes {\bf I}_d ) \Prm^{t+1}  \\
		% &=  \frac{1}{n} \oneotimesT \big( \Prm^t - \alpha \big( \nabla {\bf f}(\Prm^t;\xi^{t+1})\\ &\quad+\wta^\top(\xi_a^{t+1})\lambda^t +\theta \wta^\top \wta(\xi_a^{t+1})\hat{\Prm}^t \big) \big) \\
	&= \avgprm^t   - \frac{\alpha}{n} \oneotimesT\nabla {\bf f}(\Prm^t;\xi^{t+1}) , \label{eq:avgprm_update}
\end{aligned}
\end{equation}
	% where the last equation utilizes the fact ${\bf 1}^\top \wta^\top  = {\bf 0}$, and 
We have denoted the shorthand notations $\oneotimesT := {\bf 1}^\top \otimes {\bf I}$ and $\oneotimes := {\bf 1} \otimes {\bf I}$.  
Using Assumption \ref{assm:lip} and \eqref{eq:avgprm_update}, we get
	\begin{equation}
	    \begin{aligned}
		\mathbb{E}\left[ f(\avgprm^{t+1}) \right] & \le \mathbb{E}\left[ f(\avgprm^{t}) - \dotp{ \nabla f(\avgprm^{t})}{ \frac{\alpha}{n} \oneotimesT\nabla {\bf f}(\Prm^t) }   \right] \\
        &+ \frac{L}{2} \expec{ \left\|\frac{\alpha}{n} \oneotimesT\nabla {\bf f}(\Prm^t;\xi^{t+1}) \right\|^2} . \label{proof:lemma_desc_step1}
	\end{aligned}
	\end{equation}
	The second term of \eqref{proof:lemma_desc_step1} can be bounded as
	\begin{equation}
	    \begin{aligned}
		& -\expec{ \dotp{ \nabla f(\avgprm^{t})}{  \frac{\alpha}{n} \oneotimesT\nabla {\bf f}(\Prm^t) } }\\
		\le &-\frac{\alpha}{2}\expec{ \left\|\nabla f(\avgprm^{t}) \right\|^2 }+ \frac{\alpha}{2} \expec{ \left\| \nabla f(\avgprm^{t})-\frac{1}{n}\oneotimesT\nabla {\bf f}(\Prm^t) \right\|^2 } \\
        \le &-\frac{\alpha}{2} \expec{ \left\|\nabla f(\avgprm^{t}) \right\|^2 }+ \frac{\alpha L^2}{2n} \expec{ \left\| \Prm^t \right\|_{\wtk}^2 } .
	\end{aligned}
	\end{equation}
    % where the last equation utilize \eqref{assm:lip}.
	By Assumptions \ref{assm:lip} and \ref{assm:f_var}, the third term of \eqref{proof:lemma_desc_step1} can be bounded as 
	\begin{equation}
	    \begin{aligned}
		& \frac{L}{2} \expec{\left\|\frac{\alpha}{n} \oneotimesT\nabla {\bf f}(\Prm^t;\xi^{t+1}) \right\|^2} \\
		% & = \frac{L}{2} \expec{ \left\|\frac{\alpha}{n} \oneotimesT\big(\nabla {\bf f}(\Prm^t;\xi^{t+1}) - \nabla {\bf f}(\Prm^t) \big) \right\|^2 }\\
  %       &\quad + \frac{L}{2} \expec{ \left\| \frac{\alpha}{n} \oneotimesT \nabla {\bf f}(\Prm^t) - \alpha \nabla f(\avgprm^t) + \alpha \nabla f(\avgprm^t) \right\|^2} \\
		& \leq \frac{\alpha^2 L}{2n^2}\sum_{i=1}^n \sigma_i^2 + \frac{\alpha^2 L^3}{n}  \mathbb{E}\left[ \left\| \Prm^t \right\|_{\wtk}^2\right] + \alpha^2 L \mathbb{E}\left[\left\| \nabla f(\avgprm^t) \right\|^2\right] .
	\end{aligned}
	\end{equation}
    % to obtain $\mathbb{E}\left[\left\| \oneotimesT\left(\nabla {\bf f}(\Prm^t;\xi^{t+1}) - \nabla {\bf f}(\Prm^t) \right)\right\|^2\right] \le \sum_{i=1}^n \sigma_i^2$.
 Combining the above and setting the step size $\alpha \leq \frac{1}{4L}$ concludes the proof of the lemma.
	\hfill $\square$\\
	
\section{Proof of Lemma \ref{lemma:consensus}} \label{app:lemma_consensus_proof}
Define the following quantities:
\begin{align}
		\serr^t &:= \alpha \left(\nabla {\bf f}(\Prm^t) -\nabla {\bf f}(\Prm^t;\xi^{t+1}) \right) \notag \\
        & \quad + \alpha \theta \left( \wta^\top \rxii \wta - \wta^\top \wta(\xi_a^{t+1}) \right)\Prm^t \label{eq:stoch_err} , \\
		\gerr^t &:= \alpha \left( \nabla {\bf f}(\oneotimes \avgprm^t) - \nabla {\bf f}(\Prm^t) \right) \label{eq:global_g_err} ,\\
		{\bf e}^t_{h} &:= \alpha\theta   \wta^\top \wta(\xi_a^{t+1})  (\Prm^t - \hat{\Prm}^t), \label{eq:xhat_err} \\
		\mathbf{v}^t &:= \alpha \widetilde{\dprm}^{t} + \alpha \nabla {\bf f}(\oneotimes \avgprm^{t}) \label{eq:v_def}.
\end{align}
We notice that
	\begin{equation}
	    \begin{aligned}
		\Prm^{t+1} &= \Prm^t- \alpha (\nabla {\bf f}(\Prm^t;\xi^{t+1})+\widetilde{\lambda}^t +\theta \wta^\top \wta(\xi_a^{t+1})\hat{\Prm}^t )  \\
		&= \left({\bf I} - \alpha\theta\wta^\top \rxii \wta \right) \Prm^t - {\bf v}^t+ \serr^t + \gerr^t  +{\bf e}^t_{h}  .
		\label{proof:x_update}
	\end{aligned}
	\end{equation}
With $\expec{{\bf e}_s^t | \Prm^t} = {\bf 0}$, the consensus error can be measured as
	\begin{equation} \label{proof:lem1_step1}
	    \begin{aligned}
		& \expec{ \| \Prm^{t+1} \|_{\wtk}^2} \\
        & = \expec{\left\| \left({\bf I} - \alpha\theta\wta^\top \rxii \wta \right) \Prm^t - {\bf v}^t+ \serr^t + \gerr^t  +{\bf e}^t_{h}  \right\|_{\wtk}^2} \\
%		&= \expec{ \left\| \left({\bf I} - \alpha\theta\wta^\top \rxii \wta  \right) \Prm^t - {\bf v}^t + \gerr^t +{\bf e}^t_{h} \right\|_{\wtk}^2} + \expec{\| \serr^t \|_{\wtk}^2  } \\
		% &\le \expec{\left\| \left({\bf I} - \alpha\theta \wta^\top \rxii \wta \right) \Prm^t \right\|_{\wtk}^2 } + 3 \expec{ \|{\bf v}^t\|_{\wtk}^2  } \\
  %       & \quad+ 3 \expec{ \| \gerr^t\|_{\wtk}^2  }+ 3 \expec{ \|{\bf e}^t_{h} \|_{\wtk}^2} +  \expec{ \|\serr^t\|_{\wtk}^2  }  \\
		% & \quad - 2\expec{ \dotp{\left({\bf I} - \alpha\theta \wta^\top \rxii \wta \right) \Prm^t}{ {\bf v}^t - \gerr^t -{\bf e}^t_{h}}_{ \wtk } }   \\
		&\le \expec{\left\| \left({\bf I} - \alpha\theta \wta^\top \rxii \wta \right) \Prm^t \right\|_{\wtk}^2 } +\expec{ \|\serr^t\|_{\wtk}^2  } \\
        &\quad + 3 \expec{ \| \gerr^t\|_{\wtk}^2  }+ 3 \expec{ \|{\bf e}^t_{h} \|_{\wtk}^2} +  3 \expec{ \|{\bf v}^t\|_{\wtk}^2  }    \\
		&\quad- 2\expec{ \dotp{\left({\bf I} - \alpha\theta \wta^\top \rxii \wta \right) \Prm^t}{ {\bf v}^t }_{ \wtk } } \\
        &\quad+ \frac{\alpha\theta \rrhomin}{2} \expec{ \dotp{\left({\bf I} - \alpha\theta \wta^\top \rxii \wta \right) \Prm^t}{ \frac{4}{\alpha\theta \rrhomin} (\gerr^t+{\bf e}^t_{h}) }_{ \wtk } } \\
		&\le (1+\frac{1}{4}\alpha\theta \rrhomin)(1-2\alpha\theta \rrhomin +\alpha^2\theta^2 \rrhomax^2)  \expec{\|\Prm^t\|^2_{\wtk}} \\
        &\quad - 2\expec{ \dotp{ \Prm^t}{ {\bf v}^t }_{ \left({\bf I} - \alpha\theta \wta^\top \rxii \wta \right)\wtk } } +(3+\frac{8}{\alpha\theta \rrhomin }) \expec{ \| \gerr^t\|_{\wtk}^2}  \\
		& \quad + (3+\frac{8}{\alpha\theta \rrhomin }) \expec{ \|{\bf e}^t_{h} \|_{\wtk}^2} + 3 \expec{ \|{\bf v}^t\|_{\wtk}^2  }+  \expec{ \|\serr^t\|_{\wtk}^2} .
	\end{aligned}
	\end{equation}
	The error quantities $\expec{ \|\serr^t\|_{\wtk}^2},\expec{\|\gerr^t\|_{\wtk}^2},\expec{ \|{\bf e}^t_{h} \|_{\wtk}^2}$ can be simplified as follows.
 Using the fact that each difference term in $\serr^t$ has mean zero and are independent when conditioned on $\Prm^t$, we obtain
 \begin{align} 
    &\expec{ \| \serr^t\|^2_{\wtk} }\le  \expec{ \| \serr^t\|^2} \notag \\
    &= \expec{\| \alpha\left(\nabla {\bf f}(\Prm^t) -\nabla {\bf f}(\Prm^t;\xi^{t+1}) \right) \|^2} \notag \\
    &\quad + \expec{ \| \alpha \theta \left( \wta^\top \rxii \wta - \wta^\top \wta(\xi_a^{t+1}) \right)\Prm^t \|^2} \notag  \\
    &\stackrel{\eqref{eq:f_var}, \eqref{eq:graph_var}}{\leq}  \alpha^2 \sum_{i=1}^n \sigma_i^2 + \alpha^2 \theta^2 \sigma_A^2 \rrhomax \expec{ \| \Prm^t \|_{\wtk}^2}, \label{eq:es_bound} 
\end{align} 
and
\begin{align} 
    &\expec{\| \gerr^t \|^2_{\wtk}} \leq \expec{ \| \gerr^t \|^2} \notag \\
    & = \alpha^2\expec{ \|\nabla {\bf f}(\oneotimes \avgprm^t) - \nabla {\bf f}(\Prm^t) \|^2 }\notag \\
    & \stackrel{\eqref{eq:f_lip}}{\leq} \alpha^2 L^2\expec{ \|\oneotimes \avgprm^t - \Prm^t \|^2} = \alpha^2 L^2 \expec{\|\Prm^t \|^2_{ \wtk }}, \label{eq:eg_bound} 
\end{align} 
and
\begin{align} 
    & \expec{ \|{\bf e}^t_{h} \|_{\wtk}^2} = \expec{\left\| \alpha\theta  \wta^\top \wta(\xi_a^{t+1}) (\hat{\Prm}^t-\Prm^t ) \right\|_{\wtk}^2} \notag \\
    &\le \alpha^2 \theta^2 \rhomax^2  \expec{ \| \hat{\Prm}^t-\Prm^t \|_{\wtk}^2} . \label{eq:eh_bound}
 \end{align}
    Now we simplify the coefficients of the consensus error term by the following step size conditions:
\begin{align}
    \begin{cases}
        \alpha^2\theta^2 \rrhomax^2 \le \frac{1}{16}\alpha\theta \rrhomin & \Leftrightarrow \alpha \leq \frac{\rrhomin}{16 \rrhomax^2 \theta}, \\[0.5em]
        \alpha^2\theta^2\sigma_A^2 \rrhomax \le \frac{1}{16}\alpha\theta \rrhomin & \Leftrightarrow \alpha \leq \frac{\rrhomin}{16 \sigma_A^2 \rrhomax \theta} , \\[0.5em] 
        3 \leq \frac{1}{\alpha \theta \rrhomin} & \Leftrightarrow \alpha \leq \frac{1}{3 \rrhomin \theta}, \\[0.5em]
        \frac{9\alpha L^2}{\theta \rrhomin} \le \frac{1}{16}\alpha\theta \rrhomin & \Leftrightarrow \theta \ge \frac{12 L}{\rrhomin}. 
        % \\[0.5em]
        % \frac{8}{\alpha\theta \rrhomin} \le 1 . 
    \end{cases}
\end{align}
Combining with the upper bounds of $\expec{ \|\serr^t\|_{\wtk}^2}$, $\expec{\|\gerr^t\|_{\wtk}^2}$, $\expec{ \|{\bf e}^t_{h} \|_{\wtk}^2}$, the proof is completed.
	\hfill $\square$ \\
	% \haoming{\begin{equation}
 %            \begin{aligned}
	% 			&\expec{ \| \Prm^{t+1} \|_{\wtk}^2} \leq 
	% 			\big(1+\alpha - 2 \alpha \theta \rrhomin +\alpha^2\theta^2\tilde{\rho}_1^2 + \alpha^2 \theta^2 \sigma_A^2  \tilde{\rho}_1 \\
	% 			&+ (2\alpha\theta^{-1}+3\alpha^2) L^2 \big) \expec{\|\Prm^t \|^2_{ \wtk }} + 3 \expec{ \|{\bf v}^t\|_{\wtk}^2  } \\
	% 			& - 2\expec{ \dotp{ \Prm^t}{ {\bf v}^t }_{ \left({\bf I} - \alpha\theta \wta^\top \rxii \wta \right)\wtk } } + \alpha^2\sum_{i=1}^n \sigma_i^2  \\
 %                & + (2\alpha\theta^{-1}+3\alpha^2) \theta^2\tilde{\rho}_1^2  \expec{ \| \hat{\Prm}^t-\Prm^t \|^2},
	% 		\end{aligned}
	% 	\end{equation}}

	\section{Proof of Lemma \ref{lemma:dual_err}} \label{app:lemma_dualerr_proof}
    Consider expanding ${\bf v}^{t+1}$ as
	\begin{equation}
	    \begin{aligned}
		&{\bf v}^{t+1} \stackrel{\eqref{eq:v_def}}{=} \alpha \widetilde{\lambda}^{t+1} + \alpha \nabla {\bf f}(\oneotimes \avgprm^{t+1}) \\
%		&= \alpha \widetilde{\lambda}^{t} + \alpha\eta \wta^\top \wta(\xi_a^{t+1}) \hat{\Prm}^t  +\alpha \nabla {\bf f}(\oneotimes \avgprm^{t}) + \alpha \nabla {\bf f}(\oneotimes \avgprm^{t+1}) -  \alpha \nabla {\bf f}(\oneotimes \avgprm^{t}) \\
		&={\bf v}^t + \alpha\eta \wta^\top \wta(\xi_a^{t+1}) \Prm^t + \alpha\eta \wta^\top \wta(\xi_a^{t+1}) (\hat{\Prm}^t -\Prm^t) \\
        &+ \alpha \nabla {\bf f}(\oneotimes \avgprm^{t+1}) -  \alpha \nabla {\bf f}(\oneotimes \avgprm^{t}) .
		\label{proof:v_update}
	\end{aligned}
	\end{equation}
    To simplify notations, denote $\mathcal{N}_v^t := \alpha\eta \wta^\top \wta(\xi_a^{t+1}) \Prm^t + \alpha\eta \wta^\top \wta(\xi_a^{t+1}) (\hat{\Prm}^t -\Prm^t) + \alpha \nabla {\bf f}(\oneotimes \avgprm^{t+1}) -  \alpha \nabla {\bf f}(\oneotimes \avgprm^{t}) $. 
    By Assumption~\ref{assm:rand-graph}, for any ${\bf y} \in \mathbb{R}^{nd}$ it holds that
    \begin{equation}
        \expec{ \left\| {\bf y} \right\|_{\wtq + \c\wtk}^2 } \leq (\rrhomin^{-1} + \c) \expec{ \left\| {\bf y} \right\|_{\wtk}^2 }.
    \end{equation}
	Therefore,
	\begin{equation}
	    \begin{aligned}
		&\expec{ \left\| {\bf v}^{t+1} \right\|_{\wtq + \c {\wtk}}^2} \leq \expec{\left\| {\bf v}^t \right\|_{\wtq + \c \wtk}^2} 
        + 2\expec{ \dotp{{\bf v}^t }{\mathcal{N}_v^t}_{\wtq + \c\wtk}} \\
		&+ 4\alpha^2\eta^2  \rhomax^2 (\rrhomin^{-1} + \c) \left(\expec{ \| \Prm^t \|_{\wtk}^2 } +   \expec{ \| \hat{\Prm}^t -\Prm^t \|_{\wtk}^2 }\right)\\
        &+ 2\alpha^2(\rrhomin^{-1} + \c)\expec{ \left\|   \nabla {\bf f}(\oneotimes \avgprm^{t+1}) -  \nabla {\bf f}(\oneotimes \avgprm^{t})  \right\|^2} .  \label{proof:lem2_step1}
	\end{aligned}
	\end{equation}
    
	The second term of \eqref{proof:lem2_step1} can be simplified as
	\begin{equation}
	    \begin{aligned}
		&2\expec{\dotp{{\bf v}^t }{ \mathcal{N}_v^t}_{\wtq + \c\wtk}} \leq 2\alpha\eta \expec{\dotp{ {\bf v}^t }{ \Prm^t}_{(\wtq + 
				\c\wtk)\wta^\top   \rxi \wta}} \\
%		&= 2 \alpha\eta \expec{\dotp{ {\bf v}^t }{ \wta^\top   \rxi \wta\Prm^t}_{\wtq + \c\wtk}} +2\alpha\eta \expec{\dotp{ {\bf v}^t }{ \wta^\top   \rxi \wta (\hat{\Prm}^t -\Prm^t)}_{\wtq + \c\wtk}} \notag \\
%		&\quad + 2\alpha \expec{\dotp{ {\bf v}^t  }{  \nabla {\bf f}(\oneotimes \avgprm^{t+1}) -  \nabla {\bf f}(\oneotimes \avgprm^{t}) }_{\wtq + \c\wtk}} \\
%		&\leq 2\alpha\eta\expec{\dotp{ {\bf v}^t }{ \Prm^t}_{(\wtq + 
%				\c\wtk)\wta^\top   \rxi \wta}} + 2\alpha \expec{ \left\| {\bf v}^t \right\|_{\wtq + \c\wtk}^2 } +\alpha\eta^2\expec{\left\| \wta^\top   \rxi \wta (\hat{\Prm}^t -\Prm^t) \right\|_{\wtq + \c\wtk}^2}   \notag \\
%		&\quad + \alpha \expec{\left\| \nabla {\bf f}(\oneotimes \avgprm^{t+1}) -   \nabla {\bf f}(\oneotimes \avgprm^{t})  \right\|_{\wtq + \c \wtk}^2} \\
		& + 2 \alpha \expec{ \left\| {\bf v}^t \right\|_{\wtq + \c\wtk}^2 }+\alpha\eta^2 \rhomax^2 \expec{\left\| \hat{\Prm}^t -\Prm^t \right\|_{\wtq + \c\wtk}^2}    \\
		&+ \alpha (\rrhomin^{-1} + \c) \expec{\left\|  \nabla {\bf f}(\oneotimes \avgprm^{t+1}) -   \nabla {\bf f}(\oneotimes \avgprm^{t})  \right\|^2} .
		\label{proof:lem2_step3}
	\end{aligned}
	\end{equation}
	The proof is concluded by combining the above inequalities with the auxiliary Lemma \ref{lemma:update_err} (to be shown later) and simplifying $\wtq \wta^\top   \rxi \wta = \wtk$ and $\wtk \wta^\top   \rxi \wta = \wta^\top   \rxi \wta$.
	\hfill $\square$ \\

\section{Proof of Lemma \ref{lemma:xv_inner}}   \label{app:lemma_xvinner_proof}
We denote $\mathcal{N}_{\rxii}={\bf I}-\alpha\theta\wta^\top \rxii\wta $.
By \eqref{proof:x_update} and \eqref{proof:v_update}, it holds
\begin{equation}
	\begin{aligned}
		&\expec{\dotp{\Prm^{t+1}}{\mathbf{v}^{t+1}}_{\wtk}} = \expec{\dotp{\Prm^t}{\mathbf{v}^t}_{\mathcal{N}_{\rxii}\wtk - \alpha\eta  \wtk \wta^\top   \rxi \wta  }} \\
		&+\alpha \eta \expec{\|\Prm^{t}\|^2_{ \mathcal{N}_{\rxii}\wtk \wta^\top \rxii  \wta}}- \expec{\| \mathbf{v}^t \|^2_{\wtk}}  \\
		& +\alpha \expec{\dotp{\Prm^t}{\nabla {\bf f}(\oneotimes \avgprm^{t+1}) - \nabla {\bf f}(\oneotimes \avgprm^{t})}_{\mathcal{N}_{\rxii}\wtk}} \\
		&
		-\alpha \expec{\dotp{\mathbf{v}^{t}}{\nabla {\bf f}(\oneotimes \avgprm^{t+1}) - \nabla {\bf f}(\oneotimes \avgprm^{t}) }_{\wtk}} \\
		& -\expec{\dotp{\mathcal{N}_{\rxii} \Prm^t - \mathbf{v}^{t}}{\frac{\eta}{\theta}{\bf e}_h^t}_{\wtk}} \\
		& +\expec{\dotp{ \gerr^t+{\bf e}^t_{h} }{\mathbf{v}^{t} + \alpha\eta \wta^\top  \rxii \wta \Prm^t - \frac{\eta}{\theta} {\bf e}^t_h }_{\wtk}} \\
		& +\alpha\expec{\dotp{ {\bf e}^t_s + \gerr^t+{\bf e}^t_{h} }{\nabla {\bf f}(\oneotimes \avgprm^{t+1}) - \nabla {\bf f}(\oneotimes \avgprm^{t})}_{\wtk}} .
	\end{aligned}
\end{equation}
% where $(i)$ uses $\mathbb{E}[\serr | \Prm^t, \dprm^t] = {\bf 0}$. 
Now notice that by applying Young's inequality on the fourth to eighth terms of the above, we get
\begin{equation} \notag
	\begin{aligned}
		&\alpha \expec{\dotp{\Prm^t}{\nabla {\bf f}(\oneotimes \avgprm^{t+1}) - \nabla {\bf f}(\oneotimes \avgprm^{t})}_{\mathcal{N}_{\rxii}\wtk}} \\
		% &\leq \frac{\alpha}{2} \cdot \expec{ \| \left({\bf I} - \alpha\theta \wta^\top \rxii \wta \right)\wtk \Prm^t \|^2} + \frac{\alpha}{2} \cdot \expec{\|\nabla {\bf f}(\oneotimes \avgprm^{t+1}) - \nabla {\bf f}(\oneotimes \avgprm^{t}) \|^2 } \\
		&\leq \frac{\alpha}{2} \expec{ \| ({\bf I} - \alpha \theta \wta^\top \rxii \wta ) \Prm^t \|^2_{\wtk}} \\
		&\quad + \frac{\alpha}{2} \expec{\|\nabla {\bf f}(\oneotimes \avgprm^{t+1}) - \nabla {\bf f}(\oneotimes \avgprm^{t}) \|^2 },
	\end{aligned}
\end{equation}
and
\begin{equation} \notag
	\begin{aligned}
		&-\alpha \expec{\dotp{\mathbf{v}^{t}}{\nabla {\bf f}(\oneotimes \avgprm^{t+1}) - \nabla {\bf f}(\oneotimes \avgprm^{t}) }_{\wtk}} \\
		&\leq \frac{\alpha}{2} \expec{ \| {\bf v}^t \|^2_{\wtk}} + \frac{\alpha}{2} \expec{\|\nabla {\bf f}(\oneotimes \avgprm^{t+1}) - \nabla {\bf f}(\oneotimes \avgprm^{t}) \|^2 } ,
	\end{aligned}
\end{equation}
and
\begin{equation} \notag
	\begin{aligned}
		&-\alpha \expec{\dotp{\mathcal{N}_{\rxii}\Prm^t - \mathbf{v}^{t}}{\eta \wta^\top \wta(\xi_a^{t+1}) (\hat{\Prm}^t-\Prm^t)}_{\wtk}} \\
		& \le \alpha \expec{\| \left({\bf I} - \alpha\theta\wta^\top \rxii \wta \right) \Prm^t \|^2_{\wtk}} + \alpha \expec{\|  \mathbf{v}^{t} \|^2_{\wtk}} \\
		& \quad + \frac{\alpha\eta^2 \rrhomax^2}{2} \expec{\| \hat{\Prm}^t-\Prm^t \|_{\wtk}^2}  ,
	\end{aligned}
\end{equation}
% because $\rho(\wtk) \leq 1$, and
and
\begin{equation} \notag
	\begin{aligned}
		&\expec{\dotp{ \gerr^t+{\bf e}^t_{h} }{\mathbf{v}^{t} + \alpha\eta \wta^\top  \rxii \wta \Prm^t - \frac{\eta}{\theta} {\bf e}_h^t }_{\wtk}} \\
		& \quad +\alpha\expec{\dotp{ {\bf e}_s^t + \gerr^t+{\bf e}^t_{h} }{\nabla {\bf f}(\oneotimes \avgprm^{t+1}) - \nabla {\bf f}(\oneotimes \avgprm^{t})}_{\wtk}} \\
		&\leq \frac{3 \alpha}{2} \expec{\| {\bf e}_s^t \|^2_{\wtk}} + (1 + \frac{1}{2} + \frac{3 \alpha}{2}) \expec{\| \gerr^t \|^2_{\wtk}} \\
		&\quad +  (1 + \frac{\eta^2}{2 \theta^2} - \frac{\eta}{\theta} + \frac{3 \alpha}{2}) \expec{\| {\bf e}_h^t \|^2_{\wtk}} \\
		& \quad + \frac{1}{2}(1 + \frac{1}{2}) \expec{\|\mathbf{v}^{t}\|^2_{\wtk}} + \frac{1}{2}(1 + 2) \alpha^2\eta^2 \expec{\|\wta^\top   \rxi \wta \Prm^t \|^2_{\wtk}}\\ 
		& \quad +\frac{\alpha}{2} \expec{\| \nabla {\bf f}(\oneotimes \avgprm^{t+1}) - \nabla {\bf f}(\oneotimes \avgprm^{t}) \|^2_{\wtk}} .
	\end{aligned}
\end{equation}
By the step size conditions $\alpha \leq 1$ and $\eta \leq \theta$, we can combine the above inequalities to get
% \begin{equation}
	\begin{align*}
			&\expec{\dotp{\Prm^{t+1}}{\mathbf{v}^{t+1}}_{\wtk}} \\
			&\leq \expec{\dotp{\Prm^t}{\mathbf{v}^t}_{\wtk-(\alpha\theta+\alpha\eta)\wta^\top \rxii \wta }} + ( \frac{3 \alpha}{2} - \frac{1}{4}) \expec{ \| {\bf v}^t \|^2_{\wtk}}  \\
			&\quad +  \frac{3 \alpha}{2}( 1 - 2 \alpha \theta \rrhomin +\alpha^2\theta^2 \rrhomax^2 + \alpha \eta^2 \rrhomax^2) \expec{\|\Prm^t\|^2_{\wtk}}   \\
			&\quad + \frac{\alpha\eta^2 \rrhomax^2 }{2} \expec{ \| \hat{\Prm}^t-\Prm^t \|_{\wtk}^2} \\
			&\quad + \frac{3 \alpha}{2} \expec{\| \nabla {\bf f}(\oneotimes \avgprm^{t+1}) - \nabla {\bf f}(\oneotimes \avgprm^{t}) \|^2_{\wtk}} \\
			&\quad + \frac{3\alpha}{2} \expec{\| {\bf e}_s^t \|^2_{\wtk}} + 3 \expec{\| \gerr^t \|^2_{\wtk}} + \frac{5}{2} \expec{\| {\bf e}_h^t \|^2_{\wtk}} .
	\end{align*}
% \end{equation}
Now apply the inequalities in \eqref{eq:es_bound}, \eqref{eq:eg_bound}, \eqref{eq:eh_bound} to see that
\begin{equation}
	\begin{aligned}
		\begin{aligned}
			&\expec{\dotp{\Prm^{t+1}}{\mathbf{v}^{t+1}}_{\wtk}} \\
			&\leq \expec{\dotp{\Prm^t}{\mathbf{v}^t}_{\wtk-(\alpha\theta+\alpha\eta)\wta^\top \rxii \wta }} + ( \frac{3 \alpha}{2} - \frac{1}{4}) \expec{ \| {\bf v}^t \|^2_{\wtk}}  \\
			&\quad +  \frac{3 \alpha}{2}(1-2\alpha\theta \rrhomin + \alpha^2\theta^2\rrhomax^2 + \alpha \eta^2 \rrhomax^2 \\
			&\qquad \qquad + \alpha^2 \theta^2 \sigma_A^2 \rrhomax + 2 \alpha L^2) \expec{\|\Prm^t\|^2_{\wtk}}   \\
			&\quad + \frac{1 }{2} ( \alpha\eta^2 \rrhomax^2 + 5 \alpha^2 \theta^2 \rhomax^2 ) \expec{ \| \hat{\Prm}^t-\Prm^t \|_{\wtk}^2} \\
			&\quad + \frac{3 \alpha}{2} \expec{\| \nabla {\bf f}(\oneotimes \avgprm^{t+1}) - \nabla {\bf f}(\oneotimes \avgprm^{t}) \|^2_{\wtk}} + \frac{3\alpha^3}{2} \sum_{i=1}^n \sigma_i^2 .
		\end{aligned}
	\end{aligned}
\end{equation}
Then, under the following step size conditions:
\begin{equation}
	\begin{cases}
		\alpha^2\theta^2\rrhomax^2 \leq \alpha \theta \rrhomin / 2 & \Leftrightarrow \alpha \leq \frac{\rrhomin}{2 \rrhomax^2 \theta }, \\[0.5em]
		\alpha \eta^2 \rrhomax^2  \leq \alpha \theta \rrhomin / 2 & \Leftrightarrow \theta \ge  \frac{2\eta^2\rrhomax^2}{\rrhomin}
         , \\[0.5em]
		\alpha^2 \theta^2 \sigma_A^2 \rrhomax  \leq \alpha \theta \rrhomin / 2 & \Leftrightarrow \alpha \leq \frac{\rrhomin}{2 \sigma_A^2 \rrhomax \theta}, \\[0.5em]
		2 \alpha L^2 \leq \alpha \theta \rrhomin / 2 & \Leftrightarrow \theta \ge 4 L^2 / \rrhomin,
	\end{cases}
\end{equation}
and $\alpha \leq 1/12$, the proof is completed by applying the result of Lemma \ref{lemma:update_err}.
\hfill $\square$
 
 % \haoming{\begin{equation}
 %            \begin{aligned}
 %                &\expec{\dotp{\Prm^{t+1}}{\mathbf{v}^{t+1}}_{\wtk}} \leq 
 %                \expec{\dotp{\Prm^t}{\mathbf{v}^t}_{\wtk-(\alpha\theta+\alpha\eta)\wta^\top \rxii \wta }} \\
 %                &- \frac{1}{2} \expec{ \| {\bf v}^t \|^2_{\wtk}}  +{\bf N} \expec{\|\Prm^t\|^2_{\wtk}}   \\
 %                &+ \big(3\alpha^2\theta^2\tilde{\rho}_1^2 + \frac{\alpha\eta^2}{2}\tilde{\rho}_1^2  \big) \expec{ \| \hat{\Prm}^t-\Prm^t \|^2} \\
 %                &+ 5\alpha^2 n L^2 \left\{  \frac{1}{n^2}  \sum_{i=1}^n  \sigma_i^2  + \expec{\left\| \nabla f(\avgprm^{t}) \right\|^2}  \right\} .
 %            \end{aligned}
 %        \end{equation}
 %        where ${\bf N}=\frac{3}{2}\alpha+3L^2 \alpha^2 +(\alpha\eta-\alpha^2\theta-2\alpha^2\theta)\tilde{\rho}_1+(\frac{\alpha}{2}-\alpha^2\eta\theta+\alpha^3\theta^2+\frac{\alpha^2\eta^2}{2}) \tilde{\rho}_1^2 + (\alpha+ \frac{1}{2}\alpha^2) 3\alpha^2 L^4$}
	% \hfill $\square$
	
\section{Proof of Lemma \ref{lemma:xhat-err}}  \label{app:lemma_xhaterr_proof}
By Assumption \ref{assm:compress}, it holds that
	\begin{equation}
		\begin{aligned}
			&\expec{\|\hat{\Prm}^{t+1}-\Prm^{t+1}\|^2} \\
            &= \expec{ \| \hat{\Prm}^t - \Prm^{t+1} + \gamma Q(\Prm^{t+1}-\hat{\Prm}^t;\xi_q^{t+1}) \|^2 }\\
			& \le (1-\update{r}\gamma\delta) \expec{\|\Prm^{t+1}-\Prm^t + \Prm^t - \hat{\Prm}^t\|^2}+ \gamma^2\sigma_{\xi}^2 \\
			& \le (1-\update{r}\gamma\delta)(1+\frac{1}{\tau})\expec{\|\Prm^t - \hat{\Prm}^t\|^2} \\
            & \quad + (1-\update{r}\gamma\delta)(1+\tau)\expec{\|\Prm^{t+1}-\Prm^t \|^2} + \gamma^2\sigma_{\xi}^2 \\
			&\leq (1-\frac{\update{r}\gamma\delta}{2})
			 \expec{ \|\Prm^t - \hat{\Prm}^t\|^2} + (\frac{2}{\update{r}\gamma\delta} - 2)\expec{\|\Prm^{t+1}-\Prm^t \|^2}  \\
            &\quad+ \gamma^2\sigma_{\xi}^2 ,
		\end{aligned}
	\end{equation}
	where the last inequality is obtained by choosing $\tau = \frac{1- \update{r}\gamma \delta}{\update{r}\gamma \delta / 2}$. By \eqref{proof:x_update}, we get
	% \begin{equation}
	\begin{align*}
		&\expec{\|\Prm^{t+1}-\Prm^t \|^2}  \\
		&\le  2\alpha^2 \theta^2 \expec{\left\| \wta^\top \rxii \wta \Prm^t \right\|^2 } + 2\alpha \theta \expec{ \dotp{ \Prm^t}{ {\bf v}^t }_{ \wta^\top \rxii \wta \wtk } }  \\
		& \quad + 2 \expec{\|{\bf v}^t\|^2}+ 2 \expec{ \| \gerr^t\|^2  } + 2 \expec{ \|{\bf e}^t_{h} \|^2} +  \expec{ \|\serr^t\|^2  } \\
		&\leq \left( 2 \alpha^2\theta^2 \rrhomax^2 + 2 \alpha^2 L^2+ \alpha^2 \theta^2 \sigma_A^2  \rrhomax  \right) \expec{\left\| \Prm^t \right\|^2_{\wtk} } \\
        & \quad + 2\alpha \theta \expec{ \dotp{ \Prm^t}{ {\bf v}^t }_{\wta^\top \rxii \wta } } + 2 \expec{\|{\bf v}^t\|^2} \\
		& \quad + 2 \alpha^2 \theta^2 \rhomax^2\expec{ \| \hat{\Prm}^t-\Prm^t \|^2}+ \alpha^2 \sum_{i=1}^n \sigma_i^2,
	\end{align*}
	% \end{equation}
	where the last inequality is due to \eqref{eq:es_bound}, \eqref{eq:eg_bound}, \eqref{eq:eh_bound}.
	By the fact that $\widetilde{\lambda}^t = \wta^\top \lambda^t \Rightarrow ({\bf 1} \otimes {\bf I_d})^\top \widetilde{\lambda}^t = 0$, we have
	\begin{equation}
	    \begin{aligned}
		&\|{\bf v}^t\|_{\wtk}^2 = \|\alpha \widetilde{\lambda}^{t} + \alpha \nabla {\bf f}(\oneotimes \avgprm^{t})\|_{({\bf I} - \frac{1}{n} {\bf 1}{\bf 1}^\top) \otimes {\bf I}_d}^2 \\
		&= \|{\bf v}^t\|^2 - \alpha^2 \| \nabla {\bf f}(\oneotimes \avgprm^{t}) \|^2_{\frac{1}{n} {\bf 1}{\bf 1}^\top \otimes {\bf I}_d} \\
		&= \|{\bf v}^t\|^2 - \alpha^2 \left\| \nabla f(\avgprm^{t}) \right\|^2 .
	\end{aligned}
	\end{equation}
	Combining the above and applying $2\alpha^2 \theta^2 \rhomax^2 \leq \gamma \delta / 4 \Leftrightarrow \alpha \leq \sqrt{\gamma \delta / (8 \rhomax^2 \theta^2 )}$ completes the proof.
	\hfill $\square$

\section{Proof of Lemma \ref{lemma:para_choose}}   \label{app:lemma_parachoose_proof}
We first obtain a lower bound for $F_t$. By the inequality $|\dotp{ {\bf x} }{ {\bf y} } | \leq \frac{1}{2\delta_0} \| {\bf x} \|^2 + \frac{\delta_0}{2} \| {\bf y} \|^2$ for any $\delta_0> 0$,
	\begin{equation}
	    \begin{aligned}
		&F_t \ge f(\avgprm^{t})  +  \a\| \Prm^t \|_{\wtk}^2 +  \| {\bf v}^t \|_{\b\wtq + \b\c \wtk}^2 - \frac{\d}{2 \delta_0} \| \Prm^t \|^2_{\wtk} \\
        &\quad- \frac{\d \delta_0}{2} \| {\bf v}^t\|^2_{\wtk}  +\e \| \hat{\Prm}^t-\Prm^t \|^2 \\
		&= f(\avgprm^{t})  +  (\a - \frac{\d}{2 \delta_0}) \| \Prm^t \|_{\wtk}^2 +   \| {\bf v}^t \|_{\b \wtq + (\b\c - \d \delta_0 /2) \wtk}^2 \\
        &\quad +\e\| \hat{\Prm}^t-\Prm^t \|^2 \\
		&\stackrel{(\delta_0 = \frac{\d}{2\a} )}{=} f(\avgprm^{t})   +   \| {\bf v}^t \|_{\b \wtq + (\b\c - \frac{\d^2}{4\a}) \wtk}^2 +\e \| \hat{\Prm}^t-\Prm^t \|^2 \\
		&\stackrel{\eqref{eq:q_ineq}}{\ge} f(\avgprm^{t}) + \| {\bf v}^t \|_{(\b \cdot \rhomax^{-1} + \b\c - \frac{\d^2}{4\a}) \wtk}^2+\e \| \hat{\Prm}^t-\Prm^t \|^2 . \label{eq:potential_lb}
	\end{aligned}
	\end{equation}
    To simplify notation, we set $\delta_{1} = \frac{1024\rhomax}{\gamma\delta\rrhomin}, \delta_2 = 8\rhomax \rrhomin^{-1}$ in this section, i.e. $\d = \delta_1 \a,  \e = \delta_2\a$.
    % Assume that $\d = \delta_1 \a,  \e = \delta_2\a, ~ \delta_1,\delta_2 \ge 0$.
	By choosing $\delta_1\ge 2\delta_2+2, \alpha\le \frac{4}{\eta\rhomax\delta_1^2}$, it holds 
	\begin{equation}
		\begin{aligned}
		    &\b \cdot \rhomax^{-1} + \b\c - \frac{\d^2}{4\a} \\
            &\quad = \a \left( \frac{1}{\alpha\eta\rhomax}+\alpha (\frac{\delta_1}{2}(\theta+\eta)-\theta)-\delta_2 \alpha\theta-\frac{\delta_1^2}{4} \right) \ge 0.
		\end{aligned}
	\end{equation}
    % \oscar{$\b = ?$} \haoming{Has been defined in the Lemma}
	Therefore, $F_t \ge f(\avgprm^{t}) \ge f^* > -\infty$.
	If the parameters are chosen according to \eqref{eq:abcd_choice}, then 
    \begin{equation}
        \begin{aligned}
            \wxv &= -\a \cdot 2 \wtk + \a \cdot 2\alpha\theta \wta^\top \rxii \wta + \b \cdot 2\alpha\eta (\wtk+\c \wta^\top \rxi \wta) \\ 
            &\quad - \d \cdot (\alpha\theta+\alpha\eta) \wta^\top \rxii \wta +  \e \frac{4\alpha\theta}{\update{r}\gamma\delta}\wta^\top \rxii \wta =0 ,
        \end{aligned}
    \end{equation}
    and the inner product term in \eqref{eq:Ft} vanishes. 
 %    Combining \eqref{eq:abcd_choice}, we can obtain
	% \begin{equation}
	% 	\c =\alpha (\frac{\delta_1}{2}(\theta+\eta)-\theta)-\delta_2 \alpha\theta .
	% \end{equation}
    Consider the iterative relationship of the potential function. Combining the previous lemma, we can obtain 
    \begin{equation} \label{eq:Ft_decrease}
    \begin{aligned}
         \expec{F_{t+1}} &\leq \expec{F_t} + \bar{\omega}_{f}\expec{\left\| \nabla f(\avgprm^{t}) \right\|^2}+\alpha^2\bar{\omega}_{\sigma} \bar{\sigma}^2 \\
        &~ + \bar{\omega}_x\expec{\| \Prm^t \|^2_{ \wtk}}+ \bar{\omega}_v \expec{\| {\bf v}^t \|^2_{\wtk}} +8\a \gamma^2\sigma_{\xi}^2\update{\frac{\rhomax}{\rrhomin}}\\
        &~ + \bar{\omega}_{\hat{x}} \expec{\| \hat{\Prm}^t-\Prm^t \|^2} + \expec{\dotp{\Prm^t}{{\bf v}^t }_{ \wxv }} ,
    \end{aligned}
    \end{equation}
where the coefficients are given by
\begin{equation}
    \begin{aligned}
        \bar{\omega}_{f} &= -\frac{\alpha}{4} +\b\cdot 6\alpha^3(\rrhomin^{-1}+\c)n L^2 + \d\cdot \frac{9}{2} \alpha^3 n L^2 \\
        &\quad + \e\cdot  \frac{4}{\update{r}\gamma\delta}\alpha^2 ,
    \end{aligned}
\end{equation}
and
\begin{equation}
    \begin{aligned}
    \bar{\omega}_x &= \frac{\alpha L^2}{n}  - \a\cdot \frac{3}{2}\alpha\theta\rrhomin \\
    &\quad + \b\cdot(4\alpha^2\eta^2\rhomax^2 + 6\alpha^3 L^4)(\rrhomin^{-1}+\c) +\d\cdot \frac{3\alpha}{2}\\
    &\quad + \e\cdot \frac{4}{\update{r}\gamma\delta}[\alpha^2\theta^2(\rrhomax^2+\sigma_{A}^2\rrhomax/2)+\alpha^2L^2],
    \end{aligned}
\end{equation}
and
\begin{equation}
    \bar{\omega}_v =  \a\cdot 3 + \b\cdot 2\alpha (\rrhomin^{-1} + \c) - \d\cdot \frac{1}{8} +\e\cdot \frac{4}{\update{r}\gamma\delta},
\end{equation}
and
\begin{equation}
    \begin{aligned} \textstyle
        \bar{\omega}_{\hat{x}} &= \a\cdot 9 \alpha \theta \rhomax^2 \rrhomin^{-1}
        + \b\cdot 2\alpha\eta^2 \rhomax^2 (\rrhomin^{-1} + \c)   \\
        & \quad + \d\cdot \frac{1}{2}( \alpha\eta^2 \rrhomax^2 + 5 \alpha^2 \theta^2 \rhomax^2 )
        +\e\cdot-\frac{\update{r}\gamma\delta}{4} ,
    \end{aligned}
\end{equation}
and
\begin{equation}
    \begin{aligned}
        \bar{\omega}_{\sigma} &=  \frac{L}{2n} + \a n+ \b\cdot 6\alpha(\rrhomin^{-1}+\c) L^2 + \d\cdot (\frac{9\alpha L^2}{2}+\frac{3\alpha n}{2})\\
        &\quad + \e\cdot  \frac{2n}{\update{r}\gamma\delta} .
    \end{aligned}
\end{equation}

By choosing $\alpha \le \rrhomin^{-1} (\frac{\delta_1}{2}(\theta+\eta)-\theta)^{-1}$ and \eqref{eq:q_ineq}, we observe that \begin{equation}
	    \c \le \rrhomin^{-1}.
	\end{equation}
 To upper bound $\bar{\omega}_x$.
 \begin{equation}
    \begin{aligned}
    &\bar{\omega}_x \le \frac{\alpha L^2}{n}  - \a\cdot \frac{3}{2}\alpha\theta\rrhomin \\
    &+ \a\cdot(8\alpha\eta\rhomax^2 + 12\alpha^2\eta^{-1} L^4)\rrhomin^{-1} +\a\cdot \frac{3\delta_1\alpha}{2}\\
    &+\a\cdot \frac{4\delta_2}{\update{r}\gamma\delta}[\alpha^2\theta^2(\rrhomax^2+\sigma_{A}^2\rrhomax/2)+\alpha^2L^2] .
    \end{aligned}
\end{equation}
It holds that $\omega_x  \le \frac{1}{4} \alpha\theta\rrhomin \a =: \omega_x< 0$, with
\begin{align}
    \begin{cases}
        \frac{\alpha L^2}{n} \le \frac{1}{8} \alpha\theta\rrhomin \a &\Leftrightarrow  \theta \ge \frac{8L^2}{\rrhomin n\a} , \\
        8\alpha\eta\rhomax^2\rrhomin^{-1}  \le \frac{1}{4} \alpha\theta\rrhomin  &\Leftrightarrow \theta \ge 32\eta\rhomax^2\rrhomin^{-2}  , \\
        12\alpha^2\eta^{-1} L^4\rrhomin^{-1}    \le \frac{1}{8} \alpha\theta\rrhomin  &\Leftrightarrow \alpha \le \frac{\theta\eta\rrhomin^2}{96L^4}  , \\
        \frac{3}{2}\delta_1\alpha   \le \frac{3}{8} \alpha\theta\rrhomin  &\Leftrightarrow \theta \ge 8\delta_1\rrhomin^{-1}  , \\
        \frac{4\delta_2}{\update{r}\gamma\delta}\alpha^2\theta^2\rrhomax^2    \le \frac{1}{8} \alpha\theta\rrhomin  &\Leftrightarrow  \alpha\le \frac{\update{r}\gamma\delta\rrhomin}{32\delta_2\theta\rrhomax^2} , \\
        \frac{2\delta_2}{\update{r}\gamma\delta}\alpha^2\theta^2\sigma_{A}^2\rrhomax \le \frac{1}{8} \alpha\theta\rrhomin  &\Leftrightarrow \alpha\le\frac{\update{r}\gamma\delta\rrhomin}{16\delta_2\theta\sigma_{A}^2\rrhomax}  , \\
        \frac{4\delta_2}{\update{r}\gamma\delta}\alpha^2L^2    \le \frac{1}{8} \alpha\theta\rrhomin  &\Leftrightarrow \alpha\le\frac{\update{r}\gamma\delta\theta\rrhomin}{32\delta_2L^2}  .
    \end{cases}
\end{align}
% \haoming{$\alpha\le\frac{\update{r}\gamma\delta\rrhomin^2}{256\theta\rhomax\rrhomax}\cdot \min\{\frac{1}{\rrhomax},\frac{1}{\sigma_{A}^2}\}$}
To upper bound $\bar{\omega}_v$, using $\c \leq \rrhomin^{-1}$ leads to
	\begin{align}
		\bar{\omega}_v \le & \a\cdot 3 + \a\cdot 4\rrhomin^{-1} \eta^{-1}+ \a\cdot -\frac{1}{8}\delta_1 +\a\cdot \frac{4}{\update{r}\gamma\delta} \delta_2 .
	\end{align}
	It holds that $\omega_v  \le -\a\frac{1}{32}\delta_1 \le -\a =: \omega_v< 0$, with 
	\begin{align}
		\begin{cases}
			3 \le \frac{1}{32}\delta_1 &\Leftrightarrow  \delta_1\ge 96 , \\
			4\rrhomin^{-1} \eta^{-1} \le \frac{1}{32}\delta_1 &\Leftrightarrow  \delta_1\ge 128\rrhomin^{-1} \eta^{-1},\\
			\frac{4}{\update{r}\gamma\delta} \delta_2 \le \frac{1}{32}\delta_1 &\Leftrightarrow \delta_1 \ge \frac{128}{\update{r}\gamma\delta} \delta_2.
		\end{cases}
	\end{align}
To upper bound $\bar{\omega}_{\hat{x}}$,
	\begin{equation}
	    \begin{aligned}
		&\omega_{\hat{x}} \le   \a\cdot 9 \alpha \theta \rhomax^2 \rrhomin^{-1} + \a\cdot 4\eta \rhomax^2 \rrhomin^{-1}  \\
		&+ \a\cdot  \frac{\delta_1}{2}( \alpha\eta^2 \rrhomax^2 + 5 \alpha^2 \theta^2 \rhomax^2 )
        - \a\cdot \frac{\update{r}\gamma\delta}{4}\delta_2. 
	\end{aligned}
	\end{equation}
    % where we utilize $\alpha \le \min\{\frac{1}{\theta},\eta,\frac{1}{L}\}$.
	It holds that $\bar{\omega}_{\hat{x}} \le -\frac{\update{r}\gamma\delta}{16} \a =: \omega_{\hat{x}} <0$, with
	\begin{align}
		\begin{cases}
			9 \alpha \theta \rhomax^2 \rrhomin^{-1} \le \frac{\update{r}\gamma\delta}{16}\delta_2 &\Leftrightarrow \alpha \le \frac{\update{r}\gamma\delta \delta_2 \rrhomin}{144\theta \rhomax^2}, \\
			4\eta \rhomax^2 \rrhomin^{-1} \le \frac{\update{r}\gamma\delta}{16}\delta_2  &\Leftrightarrow \delta_2 \ge \frac{64\eta \rhomax^2 }{\update{r}\gamma\delta\rrhomin} , \\
            \frac{\delta_1}{2}\alpha\eta^2 \rrhomax^2  \le \frac{\update{r}\gamma\delta}{32}\delta_2  &\Leftrightarrow \alpha \le \frac{\update{r}\gamma\delta \delta_2}{16 \delta_1 \eta^2 \rrhomax^2 } ,\\
			\frac{\delta_1}{2} 5\alpha^2 \theta^2 \rhomax^2 \le \frac{\update{r}\gamma\delta}{32}\delta_2  &\Leftrightarrow \alpha \le \frac{\sqrt{\update{r}\gamma\delta \delta_2/80\delta_1}}{ \theta \rhomax} .
		\end{cases}
	\end{align}
To upper bound $\bar{\omega}_{f}$, notice that
	\begin{align}
		\omega_{f} \le - \frac{1}{4}\alpha +12\a\alpha^2\eta^{-1} \rrhomin^{-1}nL^2+ \frac{9}{2}\a \delta_1  \alpha^2 n L^2+\a \frac{4\delta_2}{\update{r}\gamma\delta} \alpha^2 .
	\end{align}
    It holds that $\bar{\omega}_{f} \le -\frac{1}{16}\alpha =: \omega_{f} < 0$, with 
	\begin{align}
		\begin{cases}
			12\a\alpha^2\eta^{-1} \rrhomin^{-1}nL^2 \le \frac{1}{16}\alpha &\Leftrightarrow \alpha \le \frac{\eta\rrhomin}{192nL^2 \a} ,\\
			\frac{9}{2}\a \delta_1 \alpha^2nL^2 \le \frac{1}{16}\alpha &\Leftrightarrow \alpha \le \frac{1}{72\delta_1nL^2\a} ,\\
			\a \frac{4\delta_2}{\update{r}\gamma\delta} \alpha^2\le \frac{1}{16}\alpha &\Leftrightarrow \alpha \le \frac{\update{r}\gamma\delta}{64\delta_2 \a} .
		\end{cases}
	\end{align}
    % \haoming{$\alpha\le \frac{\gamma\delta}{\theta}\cdot\min\{\frac{1}{384\rhomax n\a },\frac{1}{18*1024*\rhomax n\a},\frac{\rrhomin}{512\rhomax\a}\}$}
Lastly, the upper bound of $\bar{\omega}_{\sigma}$ is
	% \begin{align}
	% 	\bar{\omega}_{\sigma}  \le \frac{L}{2n} + \a \cdot n\left(1+18\frac{L^2}{\rrhominn\eta}+5\delta_1L^2n^{-1}+\frac{8}{\gamma\delta} \right) = \omega_{\sigma}.
	% \end{align}
    \begin{align}
		\bar{\omega}_{\sigma}  \le \frac{L}{2n} + \a (n +12\frac{L^2}{\eta\rrhomin}+\frac{3\delta_1\alpha}{2}(3L^2+n)+\frac{2n\delta_2}{\update{r}\gamma\delta} ) .
	\end{align}
It holds that $\bar{\omega}_{\sigma} \le \omega_{\sigma}$ under $\theta \ge \frac{8192\rhomax}{\update{r}\gamma\delta\rrhomin^2}$.
    
We notice that a sufficient condition to satisfying the above step size conditions is to set $\theta \geq \theta_{lb}, \alpha \leq \alpha_{ub}$ where
    \begin{equation}
        \begin{aligned}
            &\theta_{lb} = 4\rrhomin^{-1}\max\{\frac{2L^2}{ n\a},\frac{2048\rhomax}{\update{r}\gamma\delta\rrhomin},L^2\} ,\\
            &\alpha_{ub} = \frac{\update{r}\gamma\delta}{256\theta} \cdot\min\{\frac{\rrhomin^2}{\rrhomax^2\rhomax},\frac{\rrhomin^2}{\sigma_{A}^2\rrhomax\rhomax},  \\
            &\qquad \qquad \frac{1}{72n\a \rhomax},\frac{\rrhomin}{2\rhomax \a}\} .
        \end{aligned}
    \end{equation}
This concludes the proof. \hfill $\square$ \\

\section{Auxiliary Lemmas}
    \begin{lemma} \label{lemma:update_err}  Under Assumption \ref{assm:lip} and \ref{assm:f_var}, 
        \begin{equation} \notag
            \begin{aligned}
                &\expec{ \left\|   \nabla {\bf f}(\oneotimes \avgprm^{t+1}) - \nabla {\bf f}(\oneotimes \avgprm^{t}) \right\|^2 }\\ 
                & \le 3\alpha^2 n L^2 \left\{  \frac{1}{n^2}  \sum_{i=1}^n  \sigma_i^2  +\expec{\left\| \nabla f(\avgprm^{t}) \right\|^2}  \right\} +3\alpha^2 L^4 \|\Prm^t\|^2_{\wtk} .
            \end{aligned}
        \end{equation} 
    \end{lemma}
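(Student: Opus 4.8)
The plan is to trace everything back to the one-step change of the averaged iterate $\avgprm^{t+1}-\avgprm^t$, which is already determined by \eqref{eq:avgprm_update}. First I would recall $\avgprm^{t+1} = \avgprm^t - \frac{\alpha}{n}\oneotimesT \nabla {\bf f}(\Prm^t;\xi^{t+1}) = \avgprm^t - \frac{\alpha}{n}\sum_{i=1}^n \nabla f_i(\prm_i^t;\xi_i^{t+1})$. Since $\oneotimes\avgprm$ merely stacks $n$ copies of $\avgprm$, applying the $L$-smoothness of each $f_i$ from Assumption~\ref{assm:lip} coordinate-wise gives
\begin{equation} \notag
\| \nabla {\bf f}(\oneotimes\avgprm^{t+1}) - \nabla {\bf f}(\oneotimes\avgprm^{t}) \|^2 = \sum_{i=1}^n \| \nabla f_i(\avgprm^{t+1}) - \nabla f_i(\avgprm^{t}) \|^2 \le n L^2 \| \avgprm^{t+1} - \avgprm^{t} \|^2 ,
\end{equation}
so the task reduces to bounding $\expec{\|\avgprm^{t+1}-\avgprm^t\|^2} = \alpha^2\,\expec{\big\| \tfrac{1}{n}\sum_{i=1}^n \nabla f_i(\prm_i^t;\xi_i^{t+1}) \big\|^2}$.

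Next I would split the stochastic gradient average into three pieces --- the stochastic noise $\frac{1}{n}\sum_i(\nabla f_i(\prm_i^t;\xi_i^{t+1}) - \nabla f_i(\prm_i^t))$, the consensus drift $\frac{1}{n}\sum_i(\nabla f_i(\prm_i^t) - \nabla f_i(\avgprm^t))$, and the exact average gradient $\frac{1}{n}\sum_i \nabla f_i(\avgprm^t) = \nabla f(\avgprm^t)$ --- and apply $\|a+b+c\|^2 \le 3\|a\|^2+3\|b\|^2+3\|c\|^2$. Conditioning on the history up to time $t$ (so $\Prm^t,\avgprm^t$ are fixed and $\xi^{t+1}$ is a fresh draw), the first term is a sum of conditionally zero-mean, mutually independent vectors by Assumption~\ref{assm:f_var}, so its expected squared norm is $\frac{1}{n^2}\sum_i \expec{\|\nabla f_i(\prm_i^t;\xi_i^{t+1}) - \nabla f_i(\prm_i^t)\|^2} \le \frac{1}{n^2}\sum_i\sigma_i^2$; the second term is bounded by Jensen's inequality for $\|\cdot\|^2$ and $L$-smoothness by $\frac{1}{n}\sum_i L^2\|\prm_i^t-\avgprm^t\|^2 = \frac{L^2}{n}\|\Prm^t\|_{\wtk}^2$; the third is $\|\nabla f(\avgprm^t)\|^2$ verbatim.

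Combining these bounds gives $\expec{\|\avgprm^{t+1}-\avgprm^t\|^2} \le 3\alpha^2\big( \frac{1}{n^2}\sum_i\sigma_i^2 + \frac{L^2}{n}\|\Prm^t\|_{\wtk}^2 + \|\nabla f(\avgprm^t)\|^2 \big)$, and multiplying through by $nL^2$ produces exactly the claimed inequality (with $\|\Prm^t\|_{\wtk}^2$ read as the conditional quantity, consistent with how the companion lemmas supply it; taking a further total expectation is immediate). There is essentially no obstacle here --- the statement is a routine auxiliary bound --- and the only mild point requiring care is the conditioning argument that kills the cross terms in the noise part and keeps the variance bound additive over the $n$ agents; the rest is Cauchy--Schwarz/Jensen together with Lipschitz continuity of the gradients.
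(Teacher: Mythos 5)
Your argument is correct and is essentially the same as the paper's: both reduce the left-hand side to $nL^2\,\mathbb{E}[\|\avgprm^{t+1}-\avgprm^t\|^2]$ via $L$-smoothness, then split $\frac{\alpha}{n}\sum_i \nabla f_i(\prm_i^t;\xi_i^{t+1})$ into the zero-mean noise, the consensus-drift, and the exact averaged gradient, and bound each of the three pieces exactly as you do. Your remark that the cross terms vanish by conditioning and that the $\|\Prm^t\|_{\wtk}^2$ term should carry an expectation is consistent with how the lemma is used downstream.
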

\begin{proof} By the Lipschitz gradient assumption on each local objective function $f_i$,
	\begin{equation} \notag
	    \begin{aligned}
		&\expec{ \left\|   \nabla {\bf f}(\oneotimes \avgprm^{t+1}) - \nabla {\bf f}(\oneotimes \avgprm^{t}) \right\|^2 }  \\
		% \le &n L^2 \expec{\|\avgprm^{t+1} - \avgprm^t\|^2} \\
		& \le 3\alpha^2 n L^2 \expec{ \frac{1}{n^2}  \sum_{i=1}^n  \left\|\nabla f_i(\Prm_i^t; \xi_i^{t+1}) - \nabla f_i(\Prm_i^t) \right\|^2} \\
        &+\expec{\left\| \nabla f(\avgprm^{t}) \right\|^2+\frac{1}{n} \sum_{i=1}^n \left\| \nabla f_i(\Prm^t)-\nabla f_i(\avgprm^t) \right\|^2}   \\
		& \leq 3\alpha^2 n L^2 \left\{  \frac{1}{n^2}  \sum_{i=1}^n  \sigma_i^2  +\expec{\left\| \nabla f(\avgprm^{t}) \right\|^2}  \right\} +3\alpha^2 L^4 \|\Prm^t\|^2_{\wtk} .
	\end{aligned}
	\end{equation}
\end{proof}


\begin{thebibliography}{10}

\bibitem{dimakis2010gossip}
A.~G. Dimakis, S.~Kar, J.~M. Moura, M.~G. Rabbat, and A.~Scaglione, ``Gossip algorithms for distributed signal processing,'' {\em Proceedings of the IEEE}, vol.~98, no.~11, pp.~1847--1864, 2010.

\bibitem{xiao2007distributed}
L.~Xiao, S.~Boyd, and S.-J. Kim, ``Distributed average consensus with least-mean-square deviation,'' {\em Journal of parallel and distributed computing}, vol.~67, no.~1, pp.~33--46, 2007.

\bibitem{kar2013consensus}
S.~Kar and J.~M. Moura, ``Consensus+ innovations distributed inference over networks: cooperation and sensing in networked systems,'' {\em IEEE Signal Processing Magazine}, vol.~30, no.~3, pp.~99--109, 2013.

\bibitem{mateos2010distributed}
G.~Mateos, J.~A. Bazerque, and G.~B. Giannakis, ``Distributed sparse linear regression,'' {\em IEEE Transactions on Signal Processing}, vol.~58, no.~10, pp.~5262--5276, 2010.

\bibitem{lian2017can}
X.~Lian, C.~Zhang, H.~Zhang, C.-J. Hsieh, W.~Zhang, and J.~Liu, ``Can decentralized algorithms outperform centralized algorithms? a case study for decentralized parallel stochastic gradient descent,'' {\em Advances in neural information processing systems}, vol.~30, 2017.

\bibitem{nedic2009distributed}
A.~Nedic and A.~Ozdaglar, ``Distributed subgradient methods for multi-agent optimization,'' {\em IEEE Transactions on Automatic Control}, vol.~54, no.~1, pp.~48--61, 2009.

\bibitem{shi2015extra}
W.~Shi, Q.~Ling, G.~Wu, and W.~Yin, ``Extra: An exact first-order algorithm for decentralized consensus optimization,'' {\em SIAM Journal on Optimization}, vol.~25, no.~2, pp.~944--966, 2015.

\bibitem{qu2017harnessing}
G.~Qu and N.~Li, ``Harnessing smoothness to accelerate distributed optimization,'' {\em IEEE Transactions on Control of Network Systems}, vol.~5, no.~3, pp.~1245--1260, 2017.

\bibitem{zeng2018nonconvex}
J.~Zeng and W.~Yin, ``On nonconvex decentralized gradient descent,'' {\em IEEE Transactions on signal processing}, vol.~66, no.~11, pp.~2834--2848, 2018.

\bibitem{koloskova2020unified}
A.~Koloskova, N.~Loizou, S.~Boreiri, M.~Jaggi, and S.~Stich, ``A unified theory of decentralized sgd with changing topology and local updates,'' in {\em International conference on machine learning}, pp.~5381--5393, PMLR, 2020.

\bibitem{liu2024decentralized}
Y.~Liu, T.~Lin, A.~Koloskova, and S.~U. Stich, ``Decentralized gradient tracking with local steps,'' {\em Optimization Methods and Software}, pp.~1--28, 2024.

\bibitem{zhang2014asynchronous}
R.~Zhang and J.~Kwok, ``Asynchronous distributed admm for consensus optimization,'' in {\em International conference on machine learning}, pp.~1701--1709, PMLR, 2014.

\bibitem{chang2016asynchronous}
T.-H. Chang, M.~Hong, W.-C. Liao, and X.~Wang, ``Asynchronous distributed admm for large-scale optimization—part i: Algorithm and convergence analysis,'' {\em IEEE Transactions on Signal Processing}, vol.~64, no.~12, pp.~3118--3130, 2016.

\bibitem{li2019communication}
W.~Li, Y.~Liu, Z.~Tian, and Q.~Ling, ``Communication-censored linearized admm for decentralized consensus optimization,'' {\em IEEE Transactions on Signal and Information Processing over Networks}, vol.~6, pp.~18--34, 2019.

\bibitem{zhang2018admm}
C.~Zhang, M.~Ahmad, and Y.~Wang, ``Admm based privacy-preserving decentralized optimization,'' {\em IEEE Transactions on Information Forensics and Security}, vol.~14, no.~3, pp.~565--580, 2018.

\bibitem{shi2014linear}
W.~Shi, Q.~Ling, K.~Yuan, G.~Wu, and W.~Yin, ``On the linear convergence of the admm in decentralized consensus optimization,'' {\em IEEE Transactions on Signal Processing}, vol.~62, no.~7, pp.~1750--1761, 2014.

\bibitem{bastianello2020asynchronous}
N.~Bastianello, R.~Carli, L.~Schenato, and M.~Todescato, ``Asynchronous distributed optimization over lossy networks via relaxed admm: Stability and linear convergence,'' {\em IEEE Transactions on Automatic Control}, vol.~66, no.~6, pp.~2620--2635, 2020.

\bibitem{hajinezhad2019perturbed}
D.~Hajinezhad and M.~Hong, ``Perturbed proximal primal--dual algorithm for nonconvex nonsmooth optimization,'' {\em Mathematical Programming}, vol.~176, no.~1, pp.~207--245, 2019.

\bibitem{yi2021linear}
X.~Yi, S.~Zhang, T.~Yang, T.~Chai, and K.~H. Johansson, ``Linear convergence of first-and zeroth-order primal--dual algorithms for distributed nonconvex optimization,'' {\em IEEE Transactions on Automatic Control}, vol.~67, no.~8, pp.~4194--4201, 2021.

\bibitem{chang2020distributed}
T.-H. Chang, M.~Hong, H.-T. Wai, X.~Zhang, and S.~Lu, ``Distributed learning in the nonconvex world: From batch data to streaming and beyond,'' {\em IEEE Signal Processing Magazine}, vol.~37, no.~3, pp.~26--38, 2020.

\bibitem{nedic2014distributed}
A.~Nedi{\'c} and A.~Olshevsky, ``Distributed optimization over time-varying directed graphs,'' {\em IEEE Transactions on Automatic Control}, vol.~60, no.~3, pp.~601--615, 2014.

\bibitem{nedic2017achieving}
A.~Nedic, A.~Olshevsky, and W.~Shi, ``Achieving geometric convergence for distributed optimization over time-varying graphs,'' {\em SIAM Journal on Optimization}, vol.~27, no.~4, pp.~2597--2633, 2017.

\bibitem{wen2017terngrad}
W.~Wen, C.~Xu, F.~Yan, C.~Wu, Y.~Wang, Y.~Chen, and H.~Li, ``Terngrad: Ternary gradients to reduce communication in distributed deep learning,'' {\em Advances in neural information processing systems}, vol.~30, 2017.

\bibitem{de2015taming}
C.~M. De~Sa, C.~Zhang, K.~Olukotun, and C.~R{\'e}, ``Taming the wild: A unified analysis of hogwild-style algorithms,'' {\em Advances in neural information processing systems}, vol.~28, 2015.

\bibitem{dean2012large}
J.~Dean, G.~Corrado, R.~Monga, K.~Chen, M.~Devin, M.~Mao, M.~Ranzato, A.~Senior, P.~Tucker, K.~Yang, {\em et~al.}, ``Large scale distributed deep networks,'' {\em Advances in neural information processing systems}, vol.~25, 2012.

\bibitem{seide20141}
F.~Seide, H.~Fu, J.~Droppo, G.~Li, and D.~Yu, ``1-bit stochastic gradient descent and its application to data-parallel distributed training of speech dnns.,'' in {\em Interspeech}, vol.~2014, pp.~1058--1062, Singapore, 2014.

\bibitem{richtarik2021ef21}
P.~Richt{\'a}rik, I.~Sokolov, and I.~Fatkhullin, ``Ef21: A new, simpler, theoretically better, and practically faster error feedback,'' {\em Advances in Neural Information Processing Systems}, vol.~34, pp.~4384--4396, 2021.

\bibitem{reisizadeh2019exact}
A.~Reisizadeh, A.~Mokhtari, H.~Hassani, and R.~Pedarsani, ``An exact quantized decentralized gradient descent algorithm,'' {\em IEEE Transactions on Signal Processing}, vol.~67, no.~19, pp.~4934--4947, 2019.

\bibitem{magnusson2020maintaining}
S.~Magn{\'u}sson, H.~Shokri-Ghadikolaei, and N.~Li, ``On maintaining linear convergence of distributed learning and optimization under limited communication,'' {\em IEEE Transactions on Signal Processing}, vol.~68, pp.~6101--6116, 2020.

\bibitem{liu2021linear}
X.~Liu and Y.~Li, ``Linear convergent decentralized optimization with compression,'' in {\em International Conference on Learning Representations}, 2021.

\bibitem{zhao2022beer}
H.~Zhao, B.~Li, Z.~Li, P.~Richt{\'a}rik, and Y.~Chi, ``Beer: Fast $ o (1/t) $ rate for decentralized nonconvex optimization with communication compression,'' {\em Advances in Neural Information Processing Systems}, vol.~35, pp.~31653--31667, 2022.

\bibitem{koloskova2019decentralized}
A.~Koloskova, S.~Stich, and M.~Jaggi, ``Decentralized stochastic optimization and gossip algorithms with compressed communication,'' in {\em International Conference on Machine Learning}, pp.~3478--3487, PMLR, 2019.

\bibitem{koloskova2019decentralizedb}
A.~Koloskova, T.~Lin, S.~U. Stich, and M.~Jaggi, ``Decentralized deep learning with arbitrary communication compression,'' in {\em Proceedings of the 8th International Conference on Learning Representations}, 2019.

\bibitem{yau2022docom}
C.-Y. Yau and H.-T. Wai, ``Docom: Compressed decentralized optimization with near-optimal sample complexity,'' {\em arXiv preprint arXiv:2202.00255}, 2022.

\bibitem{xie2024communication}
A.~Xie, X.~Yi, X.~Wang, M.~Cao, and X.~Ren, ``A communication-efficient stochastic gradient descent algorithm for distributed nonconvex optimization,'' {\em arXiv preprint arXiv:2403.01322}, 2024.

\bibitem{yau2024fully}
C.-Y. Yau, H.~Liu, and H.-T. Wai, ``A stochastic approximation approach for efficient decentralized optimization on random networks,'' {\em arXiv preprint arXiv:2410.18774v2}, 2024.

\bibitem{srivastava2011distributed}
K.~Srivastava and A.~Nedic, ``Distributed asynchronous constrained stochastic optimization,'' {\em IEEE journal of selected topics in signal processing}, vol.~5, no.~4, pp.~772--790, 2011.

\bibitem{reisizadeh2023dimix}
H.~Reisizadeh, B.~Touri, and S.~Mohajer, ``Dimix: Diminishing mixing for sloppy agents,'' {\em SIAM Journal on Optimization}, vol.~33, no.~2, pp.~978--1005, 2023.

\bibitem{nassif2024differential}
R.~Nassif, S.~Vlaski, M.~Carpentiero, V.~Matta, and A.~H. Sayed, ``Differential error feedback for communication-efficient decentralized optimization,'' in {\em 2024 IEEE 13rd Sensor Array and Multichannel Signal Processing Workshop (SAM)}, pp.~1--5, IEEE, 2024.

\bibitem{michelusi2022finite}
N.~Michelusi, G.~Scutari, and C.-S. Lee, ``Finite-bit quantization for distributed algorithms with linear convergence,'' {\em IEEE Transactions on Information Theory}, vol.~68, no.~11, pp.~7254--7280, 2022.

\bibitem{nassif2023quantization}
R.~Nassif, S.~Vlaski, M.~Carpentiero, V.~Matta, M.~Antonini, and A.~H. Sayed, ``Quantization for decentralized learning under subspace constraints,'' {\em IEEE Transactions on Signal Processing}, vol.~71, pp.~2320--2335, 2023.

\bibitem{di2016next}
P.~Di~Lorenzo and G.~Scutari, ``Next: In-network nonconvex optimization,'' {\em IEEE Transactions on Signal and Information Processing over Networks}, vol.~2, no.~2, pp.~120--136, 2016.

\bibitem{mathkar2016nonlinear}
A.~S. Mathkar and V.~S. Borkar, ``Nonlinear gossip,'' {\em SIAM Journal on Control and Optimization}, vol.~54, no.~3, pp.~1535--1557, 2016.

\bibitem{liu2025two}
H.~Liu, C.-Y. Yau, and H.-T. Wai, ``A two-timescale primal-dual algorithm for decentralized optimization with compression,'' in {\em ICASSP}, 2025.

\bibitem{bertsekas1997nonlinear}
D.~P. Bertsekas, ``Nonlinear programming,'' {\em Journal of the Operational Research Society}, vol.~48, no.~3, pp.~334--334, 1997.

\bibitem{alistarh2017qsgd}
D.~Alistarh, D.~Grubic, J.~Li, R.~Tomioka, and M.~Vojnovic, ``Qsgd: Communication-efficient sgd via gradient quantization and encoding,'' {\em Advances in neural information processing systems}, vol.~30, 2017.

\bibitem{liao2024robust}
Y.~Liao, Z.~Li, S.~Pu, and T.-H. Chang, ``A robust compressed push-pull method for decentralized nonconvex optimization,'' {\em arXiv preprint arXiv:2408.01727}, 2024.

\bibitem{borkar1997stochastic}
V.~S. Borkar, ``Stochastic approximation with two time scales,'' {\em Systems \& Control Letters}, vol.~29, no.~5, pp.~291--294, 1997.

\bibitem{ghadimi2013stochastic}
S.~Ghadimi and G.~Lan, ``Stochastic first-and zeroth-order methods for nonconvex stochastic programming,'' {\em SIAM journal on optimization}, vol.~23, no.~4, pp.~2341--2368, 2013.

\bibitem{liu2020linear}
X.~Liu, Y.~Li, R.~Wang, J.~Tang, and M.~Yan, ``Linear convergent decentralized optimization with compression,'' {\em arXiv preprint arXiv:2007.00232}, 2020.

\bibitem{loshchilov2016sgdr}
I.~Loshchilov and F.~Hutter, ``Sgdr: Stochastic gradient descent with warm restarts,'' {\em arXiv preprint arXiv:1608.03983}, 2016.

\end{thebibliography}
\end{document}